\newtheorem{lemma} {Lemma} [section]
\newtheorem{proposition} [lemma] {Proposition}
\newtheorem{theorem} [lemma] {Theorem}
\newtheorem{ftheorem} [lemma] {Folklore Theorem}
\newtheorem{corollary} [lemma] {Corollary}
\newtheorem{definition}[lemma] {Definition}
\newtheorem{example}[lemma] {Example}
\newtheorem{remark}[lemma]{Remark}
\newenvironment{proof}{{\sc Proof:}}{ {\hspace*{\fill} $\square$\\} }
\newcommand{\dto}{\dashrightarrow}
\newcommand{\dtod}[1]{\cdot\cdot_{#1}\dto}
\def\R{\mathbb{R}}
\def\gg{\mathfrak{h}}
\def\h{\hbar}
\def\e{\epsilon}
\def\a{\alpha}
\def\b{\beta}
\def\i{inv}
\def\P{\mathfrak{P}}
\def\G{\mathcal{G}}
\def\Fou{\mathfrak{F}}
\def\ham{\phi}
\def\px{\mathfrak{c}}
\def\zero{0^{T^*}}
\def\Eu{\mathcal{E}}
\def\homp{\mu}
\def\ut{1^{(3)}}
\def\e{\epsilon}
\def\bep{[[\e]]}
\def\Tay{\bar{ \mathcal{T}}} 
\def\ba{\bar{\a}}
\def\bb{\bar{\b}}
\def\bfu{\bar{f}}
\def\bG{\bar{G}}
\def\bS{\bar{S}}
\def\Iform{I_{formal}}
\def\bm{\bar{m}}
\def\bpi{\bar{\pi}}
\def\t{\tau}
\def\g{\gamma}
\def\RT{[RT]}
\def\NRT{NRT}
\def\sym{E}
\begin{document}

\title{Generating functions for local symplectic groupoids and non-perturbative semiclassical quantization}

\author{Alejandro Cabrera\footnote{Instituto de Matem\'atica, Universidade Federal do Rio de Janeiro, Rio de Janeiro, Brazil; ORCID ID: 0000-0003-3279-0062; \emph{email:} alejandro@matematica.ufrj.br; \emph{webpage:} \url{http://www.im.ufrj.br/alejandro/} } }

\maketitle

\begin{abstract}
	This paper contains three results about generating functions for Lie-theoretic integration of Poisson brackets and their relation to quantization. In the first, we show how to construct a generating function associated to the germ of any local symplectic groupoid and we provide an explicit (smooth, non-formal) universal formula $S_\pi$ for integrating any Poisson structure $\pi$ on a coordinate space. The second result involves the relation to semiclassical quantization. We show that the formal Taylor expansion of $S_{t\pi}$ around $t=0$ yields an extract of Kontsevich's star product formula based on tree-graphs, recovering the formal family introduced by Cattaneo, Dherin and Felder in \cite{CDF}. The third result involves the relation to semiclassical aspects of the Poisson Sigma model. We show that $S_\pi$ can be obtained by non-perturbative functional methods, evaluating a certain functional on families of solutions of a PDE on a disk, for which we show existence and classification.
\end{abstract}

\tableofcontents

\section{Introduction}
It is well known that Lie-theoretic (local) symplectic groupoid structures appear in the semiclassical limit of quantizations of Poisson manifolds (see e.g. \cite{KM} for an extensive account and \cite[\S 5.7]{CDW4} for a recent functorial perspective). Perhaps one of the simplest ways of describing this limit is to consider a star product $\star_\h$ on a coordinate domain $M$ for which
\begin{equation}\label{eq:starSP}
(e^{\frac{i}{\h}p_1} \star_\h e^{\frac{i}{\h}p_2})(x) = a_\h(p_1,p_2,x) e^{\frac{i}{\h} S_P(p_1,p_2,x)},
\end{equation}
where $p_1,p_2: M\to \R$ are linear functions and $a_\h$ is regular as $\h\to 0$. The leading contribution is then given by the fast oscillatory exponent and one can heuristically show that $S_P$ \emph{generates} a local symplectic groupoid structure on $T^*M$ (see \cite{CDF} and below). The aim of this paper is to study generating functions for local symplectic groupoids, first from a pure Poisson-geometric perspective, and then to establish rigorous relations between them and Kontsevich's star product (\cite{Kontquant}). We will see the latter both as a concrete formal power series and as a result of quantization of the field-theoretic Poisson Sigma Model. Below, we introduce the main concepts and then proceed to outline our main results.

\medskip

{\bf Lie theory for Poisson brackets.} In analogy with the classical relation between Lie brackets and the Lie groups of transformations generated by them, Poisson brackets can be seen to generate Lie-theoretic structures in the realm of groupoids, \cite{CDW,Karasev}. Let us elaborate on this idea in a way that will be useful for the contents of this paper.
Given a Poisson structure $\pi$, one can think of covectors as infinitesimal transformations of individual points and consider an associated system of ODE's given in coordinate charts by
\begin{equation}\label{eq:Ppeq}
\dot x^i = \pi^{ij}(x) p_j, \ x(0)=x_0, 
\end{equation}
where the $p_j$'s enter as linear parameters. When these parameters are small enough, the solution $x_t$ is defined up to time $t=1$, and the idea is to consider the data
$$ x_{t=1} \overset{g=(x_0,p)}{\leftarrow} x_0 $$
as an arrow in a certain category transforming its source, $x_0$, into its target, $x_{t=1}$, both seen as objects. Continuing this categorical thinking, the next idea is to introduce an operation of composition of such arrows, satisfying natural axioms including associativity. This can indeed be achieved by taking into account a symplectic structure on the set of pairs $(x,p)$, together with a Poisson bracket preserving map $(x,p)\mapsto \a(x,p)$, as we shall recall below.
The resulting structure is that of a \emph{local symplectic groupoid}, which we altogether denote by $G\equiv(P_G,\omega_G,M_G,\a_G,\b_G,\i_G,m_G)$, and whose detailed definition we recall in section \ref{sec:lsg}.

\medskip

{\bf Local symplectic groupoids.}
To continue with our introductory discussion, we recall that in a local symplectic groupoid $G$, the set of arrows is given by a symplectic manifold $(P,\omega)$, the units $M\hookrightarrow P$ define a lagrangian submanifold and the graph of the multiplication map $m$ defines a lagrangian submanifold
\begin{equation}\label{eq:grmlag} gr(m)=\{(z_1,z_2,z_3): z_3=m(z_1,z_2)\} \hookrightarrow (P,-\omega) \times (P,-\omega) \times (P,\omega) =: \P_P. \end{equation}
As a consequence, the source map $\a$ defines a \emph{symplectic realization} (see \cite{CDW}): there exists a unique Poisson structure $\pi$ on $M$ such that $\a$ preserves brackets. In this case, we say that \emph{$G$ integrates the Poisson manifold $(M,\pi)$}. Following \cite{CDW} further, the symplectic realization data $(P,\omega,M,\alpha)$ actually determines the germ of $G$ around the units completely. Indeed, consider a symplectic realization in which $\alpha$ is a surjective submersion and admits a lagrangian section $M\hookrightarrow P$; such a symplectic realization is called \emph{strict}. By \cite[Chap. III, Thm. 1.2]{CDW}, which we also recall in Section \ref{subsec:streal} below, we can associate a local symplectic groupoid structure 
\begin{equation}\label{eq:Gfdef}
(P,\omega,M,\a) \mapsto \G(P,\omega,M,\a)=G
\end{equation} 
whose germ around $M$ is uniquely characterized by the property that $M$ gives the units and $\a$ defines the source map.
A distinguished case is given when
$$ (P,\omega,M) = (U\subset T^*M, \omega_c, 0^{T^*M})$$
with $U\subset T^*M$ an open neighborhood of the zero section $0^{T^*M}$ and $\omega=\omega_c$ the canonical symplectic structure, in which we say that $G$ is in \emph{normal form}. The germ of any local symplectic groupoid is always isomorphic to one in normal form, by the lagrangian tubular neighborhood theorem.

\medskip

{\bf Generating function data.} 
The idea of \emph{generating function data} $(S,\nu)$ for $G$ is to have a \emph{reference} symplectic embedding $\nu: U_\nu\subset T^*X \hookrightarrow \P_P$, defined for some manifold $X$, and a function $S:X \to \R$ such that the graph of multiplication in eq. \eqref{eq:grmlag} becomes an exact lagrangian:
$$ \nu^{-1}(gr(m)) =  \{ (l,d_lS): l \in X \} \subset T^*X .$$
Similar generating function data appear in \cite{CDW2} in the context of \emph{symplectic microgeometry} (see Remark \ref{rmk:microgeom} below). We will focus on \emph{adapted} embeddings $\nu$ (see Section \ref{subsec:generalexist}, Definition \ref{def:adaptedframing}) for which the description of the $G$ structure is non-trivially encoded in $S$.

\medskip

{\bf The case of coordinate Poisson manifolds.} To relate to known quantization formulas, the special case in which $M$ is a \emph{coordinate space}, namely diffeomorphic to an open subset in $\R^n$, will be of special interest. In these cases, we say that $(M,\pi)$ is a \emph{coordinate Poisson manifold}. Following Karasev \cite{Karasev}, for a coordinate Poisson manifold $(M,\pi)$ there exists a canonical strict symplectic realization with $(P\subset T^*M,\omega_c,0:M\hookrightarrow T^*M)$ a neighborhood of the zero section as above and $\a\equiv \a_\pi$ a smooth map defined implicitly by an analytic formula (eq. \eqref{eq:alphapi} below, see also \cite{CD}). We denote by
$$ G_\pi := \G(T^*M,\omega_c,M,\alpha_\pi), $$ 
the corresponding (germ of) local symplectic groupoid via \eqref{eq:Gfdef} and refer to it as the \emph{canonical local symplectic groupoid} integrating $(M,\pi)$. Note that $G_\pi$ is in normal form.

On the other hand, when $M$ is a coordinate space, there is a natural symplectomorphism $\nu_c: T^*X_c \to \P_{T^*M}$ with
\begin{eqnarray}
X_c := M^* \times M^* \times M &\ni& (p_1,p_2,x), \nonumber \\
\nu_c(x_1^j dp_{1j}|_{p_1}+x_2^j dp_{2j}|_{p_2}+ p_{3j}dx_3^j|_{x_3}) &=& ((x_1,p_1),(x_2,p_2),(x_3,p_3)). \label{eq:nucoord}
\end{eqnarray}  
Given $G$ a local symplectic groupoid over a coordinate $M$ which is in normal form, a \emph{coordinate generating function} is defined to be a smooth function 
$$ S: U_S\subset X_c \to \R, \text{with $U_S$ open neighborhood of $X_0 := \{(0,0,x): x \in M\}$ in $X_c$}, $$
such that $(\nu_c,S)$ defines generating function data for $G$: this boils down to
\begin{equation}\label{eq:condS}
gr(m) =_{M^{(3)}} \ \{ ((\partial_{p_1}S,p_1) , (\partial_{p_2 }S, p_2), (x,\partial_x S)): (p_1,p_2,x)\in X_c \},
\end{equation}
where the equality holds near $M^{(3)}=\{((x,0),(x,0),(x,0)):x\in M\}\subset gr(m)$. Conversely, given a coordinate Poisson manifold $(M,\pi)$ and arbitrary function function $S:U_S\subset X_c \to \R$, we can attempt to define a local symplectic groupoid structure by eq. \eqref{eq:condS} and the required groupoid axioms result equivalent (for $M$ connected) to a quadratic PDE for $S$ called \emph{symplectic groupoid associativity equation (SGA equation)} in \cite{CDF} (see eq. \eqref{eq:SGA} below).

\medskip

{\bf Poisson-theoretic results: general existence and a formula for coordinate spaces.}  We can now state our first main result in summarized form. 

\smallskip

\underline{\textsc{First main results:}} Every local symplectic groupoid admits non-trivial adapted generating function data $(\nu,S)$ and, moreover, for every choice of adapted $\nu$ there exists a unique germ of $S$ which is fixed by the generating condition and its vanishing on units (Theorem \ref{thm:genexist}). When $M$ is a coordinate space, the embedding $\nu_c$ is adapted to the canonical local symplectic groupoid $G=G_\pi$ and the corresponding coordinate generating function $S\equiv S_\pi$ can be defined implicitly by the analytic formulas in eq. \eqref{eq:DarbS} below (Theorem \ref{thm:main1}).

\smallskip

The existence result for general $(M,\pi)$ can be deduced by combining results from \cite{CDW2}, involving generating functions for general \emph{symplectic micromorphisms}, and \cite{CDW3}, relating local symplectic groupoids to monoids in that setting. In Section \ref{subsec:generalexist}, we provide direct arguments and definitions adapted to the local symplectic groupoid geometry and comment on their relation to the general symplectic microgeometry results of \cite{CDW2,CDW3} (see Remark \ref{rmk:microgeom}).
In the coordinate case, the above $S_\pi$ will be called the \emph{canonical generating function} associated to the coordinate Poisson manifold $(M,\pi)$. It provides a universal (non-formal) solution to the SGA equation for coordinate $(M,\pi)$. From the explicit formula, it also follows that $S_{t\pi}, \ t\in [0,1]$, defines a smooth (non-formal) $1$-parameter family of generating functions for $G_{t\pi}$ (see Section \ref{subsub:smooth1par}).

\medskip

{\bf Formal expansions and Kontsevich trees.} Let $(M,\pi)$ be a coordinate Poisson manifold. In \cite{CDF}, Cattaneo-Dherin-Felder show that a certain extract of Kontsevich's quantization formula \cite{Kontquant} for coordinate Poisson manifolds yields a formal $1$-dimensional family of generating functions
$$ \bS^K \in C^\infty(X)[[\e]], \text{ $\e$ formal parameter (see Example \ref{ex:bSK} below).}$$
This formal family, called \emph{formal generating function} in \cite{CDF}, provides a formal-family solution to the SGA equation and thus it generates a formal family of (local) symplectic groupoids which 
integrates $(M,\e \pi)$. 

\underline{\textsc{Second main result:}} (Theorem \ref{thm:main2SK}) the formal Taylor series $\bar S_\pi$ of the canonical smooth family $t\mapsto S_{t\pi}$ around $t=0$ coincides with $\bS^K$.

\smallskip

This result represents an enhancement of an analogous result in \cite{CD} involving only the source (realization) map $\a\equiv \a_\pi$. In that paper, it was also noticed that the the formal Taylor expasion of $\a_{t\pi}$ around $t=0$ can be presented as a Butcher series (see \cite{Bu}): a sum over rooted trees of elementary differentials for the underlying \emph{Poisson spray equations} consisting of eq. \eqref{eq:Ppeq} together with $\dot p=0$. Moreover, in \cite{CD} it was established a relation between rooted trees, certain coefficients generalizing Bernoulli numbers and elementary differentials, on the one hand, and a subset of Kontsevich tree-graphs, their weights and their symbols on the other, thus providing an "elementary explanation" for this sub-extract of Kontsevich's quantization formula. In Section \ref{subsec:graphexp} we extend this study to provide such an elementary explanation for the full tree-level extract $\bS^K$ of the quantization formula.

It is also interesting to notice that, as a corollary of the second result above, the formal expansion $\bS^K$ is shown to be the Taylor series of a smooth (non-formal) family $S_{t\pi}$ of solutions to the non-linear SGA equation (see Section \ref{subsec:generalgenfuncs}) for any smooth $\pi$ and $t\in [0,1]$. On the other hand, $\bS^K$ rarely defines an analytic function in $\e$; as shown in \cite{Dherin}, this can only be ensured when $\pi$ itself is analytic.

\medskip

{\bf Relation to the Poisson Sigma Model (PSM): underlying functional methods.} In \cite{CFquant}, Cattaneo and Felder showed that the path integral perturbative quantization of the so-called Poisson Sigma Model (\cite{Ikeda,SchStr}) associated to a coordinate Poisson $(M,\pi)$ leads to Kontsevich's formula \cite{Kontquant} for a star product quantizing $\pi$. In Section \ref{subsec:quant} below, we apply this perspective to the relation \eqref{eq:starSP} and establish an heuristic connection between the PSM action functional and an underlying generating function $S_P$ for an integration of $(M,\pi)$. The manipulations involve formal application of stationary phase arguments to ill-defined path integrals.

Nevertheless, these heuristic considerations lead us to a concrete system of PDE's, called $(PDE)^\pi_{p_1,p_2,x}$ in Section \ref{sec:psm}, and to a modified PSM action functional $A'$, both defined for maps (or \emph{fields}) from the $2$-disk into the Poisson manifold and having $(p_1,p_2,x)\in X_c=M^*\times M^* \times M$ as external parameters. This is presented in Section \ref{subsec:psmfunc} below.

\smallskip

\underline{\textsc{Third main result:}} (Theorem \ref{thm:SP}) We show an existence and classification result for families of solutions of the system of PDE's $(PDE)^\pi_{p_1,p_2,x}$ . For any such family of solutions, the evaluation of the functional $A'$ on the family yields a function $S_P(p_1,p_2,x)$ whose germ around $X_0$ coincides with that of the canonical generating function $S_\pi$, 
$$ S_P =_{X_0} S_\pi.$$

\smallskip

This result relates the Poisson-theoretic generating function $S_\pi$ with the \emph{fields} of the PSM which provide the leading semiclassical contribution. As a consequence of the second and third main results, we obtain that the formal expansion of $S_P$, the modified PSM action functional restricted to 'semiclassical solutions', yields the formal generating function $\bS^K$ defined by Kontsevich trees (which is certainly an expected fact in the context of perturbative quantum field theory). The third result also helps clarify the relation between the PSM on the disk (with $3$ insertions in the boundary) and the "Weinstein groupoid" integration of a Poisson manifold in terms of cotangent paths modulo homotopies (see \cite{CFlie} and Remark \ref{rmk:wegd}), thus extending a result of \cite{CFgds} for the hamiltonian formalism of the PSM on a square.

\medskip

{\bf Outlook:} The collection of the results in this paper can be taken as a step towards the deeper understanding of the relation between: integration by (local) symplectic groupoids, formal and non-formal aspects of quantization. In particular, in the semiclassical limit explored here, we see that geometric constructions using (implicit) functional methods provide simpler descriptions of non-trivial formal constructions.

\bigskip

{\bf Acknowledgements:} The author is indebted with several colleagues for stimulating conversations that helped constructing the contents of this paper. In particular, with Alberto Cattaneo, Marco Gualtieri, Rui Loja-Fernandes and Gon\c{c}alo "Robin" Oliveira. I also mention that a key set of ideas regarding the relation to the PSM was kindly pointed out to me by A. Cattaneo. The author was supported by CNPq grants 305850/2018-0 and 429879/2018-0 and by FAPERJ grant JCNE  E­26/203.262/201.

\section{Notation and basic definitions} \label{sec:lsg}
In this section, we recall some notations and basic definitions involving local symplectic groupoids. We begin with general notation.
Since we will be interested in the germ of local structures, we introduce the following notation:
$$ f: X \dtod{Z} Y, f_1 \ =_Z f_2 $$
to indicate that $Dom(f)$ is an open neighborhood of a subset $Z \subset X$ and to indicate that the germ around $Z$ of two such functions coincide, respectively. For subsets $N_1,N_2 \subset X$, we say
$$ N_1 =_Z N_2 \text{ if there exists an open neighborhood $U\subset X$ of $Z$ such that } N_1\cap U = N_2\cap U.$$
Categorically, behind these notions we have the category of pairs $(X,Z\subset X)$ and morphisms given by germs of functions $X \dtod{Z} Y$ defined on open neighborhoods of the underlying subsets. More details in the context of \emph{symplectic microgeometry} can be found in the series of papers including \cite{CDW2,CDW3,CDW4}.

Let $(P,\omega)$ be a symplectic manifold. Given a function $H\in C^\infty(P)$ we denote 
\begin{equation}\label{eq:hamflow}
\ham^H: P \times \R \dtod{P\times 0} P, (z,t)\mapsto \phi^H_t(z)
\end{equation}
the induced hamiltonian flow on $P$ defined by the vector field $X_H$ satisfying $i_{X_H}\omega = dH$.
For a manifold $M$, we denote the canonical symplectic structure on $T^*M$ by $\omega_c$, for which we use the sign convention\footnote{Notice that for the linear function $H(x,p)= p_{0j} x^j$ (independent of $p$) where $p_0$ is fixed, the hamiltonian flow with $p(0)=0$ is given by $p(t)=t p_0$ (instead of $-t p_0$). This is, ultimately, the reason for our choice of sign convention in $\omega_c$.} $\omega_c = dp_j \wedge dx^j$ in canonical coordinates induced by ones on $M$. Given $H\in C^\infty(T^*M)$, the flow of the hamiltonian vector field in $(T^*M,\omega_c)$ is thus given by defined by
\begin{eqnarray}
\dot x^j &=& - \partial_{p_j}H (x,p) \nonumber \\ 
 \dot p_j &=&  \partial_{x^j}H (x,p) \label{eq:generalHam} 
\end{eqnarray}
Our convention for the induced Poisson brackets $\{,\}_c$ is taken such that $ L_{X^H} F = \{H,F\}_c , \ H,F \in C^\infty(T^*M)$. It follows that, for canonical coordinates, $ \{x^i,p_j\}_c = \delta^i_j.$

Finally, the zero section of $T^*M$ will be denoted $0^{T^*}:M \hookrightarrow T^*M,$ 
the Euler vector field on $T^*M\to M$ and the associated co-vector rescaling will be denoted $$E\equiv p_j \partial_{p_j}, \ \homp_\lambda(x,p)=(x,\lambda p).$$

\subsection{Local symplectic groupoids}\label{subsec:lsg}

A \emph{local Lie groupoid} structure (or local groupoid for short) is a collection $G\equiv (P,M,\a,\b,\i,m)$ where $P$ is a manifold, $M\hookrightarrow P$ is an embedded submanifold and
\begin{eqnarray}
\a, \b:  P \dtod{M} M, \ inv: P\dtod{M} P && \nonumber \\
m: G^{(2)} \dtod{ M^{(2)}  } P\times P \times P, & \  & \nonumber\\ \text{where } G^{(2)}:= \{(z_1,z_2): \a(z_1)=\b(z_2)\}, & M^{(2)}:= \{ (x,x):x \in M\} &
\end{eqnarray}
called source, target, inversion and multiplication, respectively, satisfying the following axioms. The maps $\a,\b$ are smooth surjective submersions, $inv$ and $m$ are smooth (the latter with respect to the pullback smooth structure on $Dom(m)$), and they satisfy the algebraic axioms of a groupoid near $M$. By this we mean that the axioms hold in the above mentioned category of pairs, so that for each axiom there is an open neighborhood of the identity arrows, embedded into composable arrows as
$$ M^{(k)}:=\{(x,..,x): x \in M \} \subset G^{(k)}:= \{(z_1,..,z_k): \a(z_l)=\b(z_{l+1}), \ l=1,..,k-1 \} ,$$
where the axiom holds (see also \cite{CMS1}). We remind the reader that this is the weakest version of a local groupoid; stroger ones demand certain axioms to hold on larger subsets (for example, requiring associativity to hold whenever both sides of the identity are defined). We will sometimes emphasize the units embedding as
$$ M \hookrightarrow P, \ x\mapsto 1_x.$$

\begin{remark}
	Some structure maps can be deduced from others, a minimal choice is to have the multiplication $m$ together with its domain and the inversion map $\i$, see e.g. \cite{KM}.
\end{remark}

Observe that any (ordinary) Lie groupoid defines a local Lie groupoid as above. Moreover, if $H\rightrightarrows M$ is a Lie groupoid and $P\subset H$ is an open neighborhood of the identities $M\hookrightarrow H$, then the restriction of the structure maps to $P$ and $P^{(2)}$ define a local Lie groupoid, called the restriction of $H$ to $P$. In \cite{FMloc}, it is shown that not every local Lie groupoid (in the sense of the present paper) appears as the restriction of some Lie groupoid $H$.

A \emph{local Lie groupoid map}, denoted $F:G_1 \to G_2$, consists of a smooth map $F:P_1 \dtod{M_1} P_2$ which takes $M_1$ inside $M_2$ and satisfies $$ F(m_1(a,b))= m_2(F(a),F(b)), \text{ for $a,b$ close enough to $M_1$.} $$
Note that the germ of $F$ defines a morphism in the category of germs of local Lie groupoids.

\begin{example} \label{ex:unitg}
 In particular, the unit groupoid $1_M$ associated to a manifold $M$ provides an example of local groupoids as defined above. Here, $P=M$ and all the structure maps are the identity (there is only one identity arrow for each object in $M$).
\end{example}

\begin{example} \label{ex:Gzeropois}
	Let $M$ be a manifold, then we denote by $G_0$ the Lie groupoid in which $P=T^*M$, source and target coincide with the projection map $T^*M \to M$, the units embedding $M\hookrightarrow T^*M$ is given by the zero section $\zero$, and the multiplication map is given by addition of covectors
	$$ m_0((x,p_1),(x,p_2)) = (x, p_1+p_2).$$
	For any open neighborhood $P\subset T^*M$ of the zero section, the restriction of the structure maps to $P$ define a local Lie groupoid.
\end{example}

\begin{example} \label{ex:Hbch}
	A local Lie groupoid with only one unit, $M=\star$ a point, is the same as a \emph{Local Lie group}. An important example, that will be used in the sequel, is the Local Lie group $H_\gg$ defined by Lie algebra $(\gg,[,])$ as:
	$$ P = \gg \text{ as a manifold, } 1_\star=0\in \gg \text{ yields the identity, } \i(p)=-p \in \gg,$$  
	and the multiplication of $p_1,p_2 \in \gg$ close enough to $0$ is given by
	$$ m_\gg(p_1,p_2) = BCH(p_1,p_2) = p_1 + p_2 + \frac{1}{2}[p_1,p_2] + ... $$
	the Baker-Campbell-Hausdorff series (see e.g. \cite{DKbook}). For further reference, $BCH(p_1,p_2)$ can also be described as $k(t=1)$ for the curve $t \mapsto k(t) \in \gg$ which is the solution of (\cite[Thm. 1.5.3]{DKbook})
	$$ \theta^R_{k(t)}(\dot k(t)) = p_1, \ k(0)=p_2,$$
	with$$ \theta^R_p:\gg \to \gg, \  \theta^R_{p}(v):= \int_0^1 du \ e^{ u \ ad_{p}} v, \ \ ad_pa=[p,a],$$
	yielding the right-invariant Maurer-Cartan form at $p\sim 0$, so that $k(t)=m_\gg(tp_1,p_2)$. 
\end{example}

A \emph{local symplectic groupoid} $G=(P, \omega,M,\a,\b,\i,m)$ is a local groupoid $(P,M,\a,\b,\i,m)$ together with a symplectic structure $\omega \in \Omega^2(U_\omega)$, where $U_\omega\subset P$ is a neighborhood of $M\hookrightarrow P$, such that the graph of the multiplication map is lagrangian near $M^{(3)}$ with respect to the symplectic structure given in \eqref{eq:grmlag}.
This condition is equivalent to the algebraic condition saying that $\omega$ is \emph{multiplicative near the units}:
\begin{equation}\label{eq:omegamult} m^*\omega - pr_1^*\omega - pr_2^*\omega =_{M^{(2)}} 0 .\end{equation}
One can show (see \cite{CDW}) that the units space $M$ inherits a unique Poisson structure $\pi$ such that $G$ \emph{integrates} $(M,\pi)$, in the sense recalled in the introduction. In this case, $\a$ defines a Poisson map and $\b$ an anti-Poisson map.

\begin{remark}\label{rmk:Gopom}
Notice that, if $G=(P, \omega,M,\a,\b,\i,m)$ defines a local symplectic groupoid integrating $(M,\pi)$, then the same data but with opposite symplectic structure, denoted $\overline{G}=(P, -\omega,M,\a,\b,\i,m)$, defines a local symplectic groupoid integrating $(M,-\pi)$.
\end{remark}

\begin{example}
	For any open neighborhood $P\subset T^*M$, the local Lie groupoid defined by the restriction of $G_0$ to $P$ (see Example \ref{ex:Gzeropois}) defines a local symplectic groupoid when $P$ is endowed with the canonical symplectic structure $\omega_c$. The induced Poisson structure on $M$ is the trivial one, $\pi=0$.
\end{example}

\begin{example}\label{ex:cotangentH}
	Let $H$ be a local Lie group with identity $e$, inverse $\iota(g)=g^{-1}$, multiplication $g,h\mapsto gh$ and Lie algebra $\gg=T_e H$. 
	The \emph{cotangent groupoid of $H$} (see \cite{CDW}), denoted $T^*H$, has arrows space given by $P=T^*H$ endowed with $\omega=\omega_c$, units space given by $M=\gg^*$, and the structure maps are given by, for $z_1 \in T_g^*H, \ z_2\in T_h^*H$ with $g,h\in H$ close enough to $e$,
	$$ \tilde \a(z_1)= L_g^*z_1 \in \gg^* \text{ source, } \tilde \b(z_1) = R_g^*z_1 \in \gg^* \text{ target, } 1_x = x \in \gg^*=T^*_eH \text{ units,}$$
	$$ \tilde{\i}(z_1)=-\iota^*z_1 \in T_{g^{-1}}^*H \text{ inversion}, \ \tilde m(z_1,z_2)= L_{g^{-1}}^*z_2 = R_{h^{-1}}^*z_1 \in T^*_{gh}H \text{ multiplication.} $$
	(Above, we denoted $L_g(h)=gh=R_h(g)$ as usual.) Endowed with this structure, $T^*H$ defines a local symplectic groupoid integrating the linear Poisson structure $\tilde \pi$ on $\gg^*$ given by
	$$ \{f,g\}_{\tilde \pi}(x) = -x([df|_x,dg|_x]), \ f,g\in C^\infty(\gg^*), \ x\in \gg^*.$$
\end{example}

Every local Lie groupoid $G$ defines an underlying Lie algebroid $Lie(G)$ as its infinitesimal counterpart and the assignment $$ G \mapsto Lie(G)$$ is functorial, see e.g. \cite{CDW} and \cite{CMS1} for the conventions used here. In the particular case of local \emph{symplectic} groupoids, the underlying Lie algebroid is naturally isomorphic to
$$ T^*_\pi M \equiv (T^* M\to M, [,],\rho)$$
where $\rho(\gamma)=\pi^\sharp(\gamma), \ \gamma \in T^*M$ and $[df_1,df_2] = d \pi(df_1,df_2)$. The isomorphism is given by 
$$ Ker(D_x \a) \to T^*_x M, \ a \mapsto -\omega^\flat(a)|_{T_x M}.$$

\subsection{Local symplectic groupoids from strict symplectic realizations}\label{subsec:streal}
Here we recall the construction \eqref{eq:Gfdef} from \cite[Chap. III, $\S$ 1]{CDW}. Let $\alpha: (P,\omega) \dtod{M} (M,\pi)$ be a \emph{strict symplectic realization} as recalled in the introduction, so that $\alpha$ is a Poisson map and $M\hookrightarrow P$ is embedded as a lagrangian submanifold such that $\alpha|_M \equiv id_M$.

\begin{example} (The \emph{spray} strict symplectic realization, \cite{CM})\label{ex:sprayreal}
 Let $(M,\pi)$ be a Poisson manifold. Generalizing a Riemannian spray, a \emph{Poisson spray} is a vector field $V\in \mathfrak{X}(T^*M)$ which is homogeneous of degree $1$ for the rescaling action $(x,p)\mapsto (x,tp), \ t\in \R$ and satisfies
 $$ Tq(V|_{(x,p)}) = \pi_x(p) \in T_xM, \text{ with } q:T^*M \to M \text{ the bundle projection.}$$
 Every Poisson manifold admits a spray. Given one such $V$, denoting its flow by $\phi^V_u$, it was proven in \cite{CM} that the following 2-form
 $$ \omega_V = \int_0^1 (\phi^V_u)^*\omega_c \ du $$
 is well defined and symplectic in a neighborhood of $0^{T^*}:M \hookrightarrow T^*M$ and that the bundle projection
 $$ q: (T^*M,\omega_V) \dtod{M} (M,\pi)$$
 defines a strict symplectic realization.
\end{example}

Given a strict symplectic realization, following \cite[Chap. III, Thm. 1.2]{CDW}, there is a unique germ of a local symplectic groupoid $G=(P,\omega,\a,\b,\i,m)$ such that the source map is the given $\a$ and the units is the given $M\hookrightarrow P$. Following the introduction, we denote this construction by
$ (P,\omega,M,\a) \mapsto G =: \G(P,\omega,M,\a)$ where the output is the germ of local symplectic groupoids defined by the strict realization.
In this local groupoid $G$, the multiplication of two elements can be recovered by hamiltonian flows of $\a$-basic functions on $P$, as follows. Given a function $f \in C^\infty(M)$, we have the associated hamiltonian flow $\ham^{\a^*f}_u$ for the pull-back $\a^*f \in C^\infty(P)$. This flow is left-invariant for the multiplication on $G$ so that
$$ \ham^{\a^*f}_u(z) = m(z,\ham^{\a^*f}_u(\a(z))),$$
for $z$ close enough to the units $M\hookrightarrow G$. 
%
%
Then, for $z_2=\ham^{\a^*f}_{u=1}(\a(z_1))$, we get that
$$ m(z_1, z_2) = \ham^{\a^*f}_{u=1}(z_1).$$
Recall that the target $\b$-fibers are given as the leaves of the symplectic orthogonal distribution to the $\a$-fibers. Using $\b$, we can also form $2$-parameter families of composable elements. For example, noticing that $\ham^{\b^*f}_u$ is right-invariant for $m$, we have, for $x\in M$ and $f_1,f_2 \in C^\infty(M)$,
\begin{equation}\label{eq:2parfam}
(u_1,u_2) \mapsto (\ham^{\b^*f_1}_{u_1}(x), \ham^{\a^*f_2}_{u_2}(x), \ham^{\b^*f_1}_{u_1}\ham^{\a^*f_2}_{u_2}(x) ) \in gr(m)\subset P \times P \times P
\end{equation}
We notice that since $\{ \b^*f_1,\a^*f_2 \}_\omega =0$, the corresponding hamiltonian flows on $(P,\omega)$ commute.
In the following section, we will be interested in some induced 1-parameter subfamilies of the above. The first is obtained by setting $u_1=1$, denoting $z=\ham^{\b^*f_1}_{1}(x)$ and $f=f_2$,
\begin{equation}\label{eq:curve1}
u \mapsto (z, \ham^{\a^*f}_{u}(\a(z)), \ham^{\a^*f}_{u}(z) ) \in gr(m)\subset P \times P \times P
\end{equation}
Restricting the 2-parameter family to the diagonal $u_1=u_2$, we get another curve
\begin{equation}\label{eq:curve2}
u \mapsto \gamma_{f_1,f_2,x}(u):= (\ham^{\b^*f_1}_{u}(x), \ham^{\a^*f_2}_{u}(x), \ham^{\b^*f_1+\a^*f_2}_{u}(x) ) \in gr(m)\subset P \times P \times P
\end{equation}
We shall use these curves to get formulas for local symplectic groupoid generating functions.

\begin{example} (The spray local symplectic groupoid, \cite{CMS1,CMS2})\label{ex:spraygd}
	Recal the spray strict symplectic realization of Example \ref{ex:sprayreal} defined by a Poisson spray $V$. The associated (germ of) local symplectic groupoid 
	$$ G_V := \G(T^*M,\omega_V,0^{T^*},q)$$
	is called spray local symplectic groupoid, and was studied in  \cite{CMS1,CMS2}. The source, target and inversion maps of $G_V$ are given by 
	$$ \a_V(a) = q(a), \ \b_V = q \phi^V_{u=1}(a), \ \i_V(a) = - \phi^V_{u=1}(a)  $$
	near the identities.
	In \cite{CMS1}, it was also shown that it provides a model for any other local symplectic groupoid over the same $(M,\pi)$: if $G'$ is another local symplectic groupoid integrating $(M,\pi)$, then there is a naturally defined isomorphism of germs
	$$ exp_V: G_V \to G'. $$
\end{example}

\section{Existence and characterization of generating function data}\label{sec:cangenf}
In this Section, we first show how to construct generating function data for arbitrary local symplectic groupoids. We then specialize to the case of coordinate Poisson manifolds and provide explicit formulas for the underlying generating functions.

\subsection{The general existence result}\label{subsec:generalexist}
As mentioned in the introduction, the first general existence result (Theorem \ref{thm:genexist} below) can be deduced by combining more general results obtained in the context of symplectic microgeometry by Cattaneo-Dherin-Weinstein, in particular from two papers \cite{CDW2,CDW3} in the series devoted to that subject. Below, we detail the special arguments and ingredients needed to prove Theorem \ref{thm:genexist} directly and mention how they relate to the general theory of symplectic microgeometry in Remark \ref{rmk:microgeom} below. 

Let $G$ be a local symplectic groupoid integrating $(M,\pi)$. Our first aim is to distinguish symplectic embeddings $\nu$ that are adapted to $G$, in a sense made precise below, which will help us in defining interesting generating function data $(\nu,S)$ for $G$.
To that end, we first recall that the tangent space at an identity arrow naturally splits as
\begin{equation*} T_{1_x}P = T1(T_xM)\oplus Ker(D_{1_x}\b) = Ker(D_{1_x}\a)\oplus T1(T_xM).\end{equation*}
Using this decompositions, we have the following description of the tangent space at $(1_x,1_x,1_x) \in P\times P \times P$
to the graph of the multiplication map,
\begin{eqnarray} T_{(1_x,1_x,1_x)} gr(m_G)& = & Im\left(C_x:T_xM \times Ker(D_{1_x}\a)  \times Ker(D_{1_x}\b) \to T_{(1_x,1_x,1_x)}(P\times P \times P)\right) \nonumber \\ 
C_x(v,a,b)&:=& (a+T1(v),b+T1(v),a+b+T1(v))
 \label{eq:T1grm}
\end{eqnarray}
To fix the normal directions to our tubular neighborhoods, let us consider a lagrangian subspace $N_x \subset (T_{1_x}P,\omega|_{1_x})$ for each $x\in M$, defining a smooth subbundle $N\subset TP|_{1(M)}$ which is transverse to $T1(TM)$ (which is also lagrangian by definition). The vector space
\begin{equation}\label{eq:defEN}
E^N_x := \{ (T1(v_1),T1(v_2), c): v_1,v_2\in T_xM, \ c \in N_x \}\subset T_{(1_x,1_x,1_x)}(\overline P \times \overline P \times P)
\end{equation}
defines a lagrangian complement to $T_{(1_x,1_x,1_x)} gr(m_G)$ inside $T_{(1_x,1_x,1_x)} \P_P$ for each $x$.

\begin{example}\label{ex:EN0}
	Consider the symplectic groupoid $G_0$ given by addition of covectors, as in Example \ref{ex:Gzeropois}. In this case, $(P,\omega,1)=(T^*M,\omega_c,0)$ and thus we can take $N_x =\{\frac{d}{dt}|_{t=0}(ta): a \in T^*_xM  \}\subset T_{0_x}(T^*M)$ to be the natural "vertical" lagrangian complement to $Im(T_x1)=Im(T_x0)$. The resulting complement to $T_{(1_x,1_x,1_x)} gr(m_{G_0})$ is given by
	$$ E^N_x := \{ (T0(v_1),T0(v_2),\frac{d}{dt}|_{t=0}(ta)), v_1,v_2\in T_xM, \ a \in T^*_xM\}.$$
\end{example}
We are now ready to define our special symplectic embeddings. 

\begin{definition}\label{def:adaptedframing}
An {\bf adapted framing for $m_G$ near the units} is a symplectic embedding
$$ \nu: (T^*X,\omega_c) \dtod{0^{T^*X}(X_0)} \overline{P} \times \overline{P} \times P=\P_P$$
with
$$ X:= T^*M\times_M T^*M \supset X_0:=\{ (0_x,0_x): \ x\in M \},$$
satisfying:
\begin{enumerate}
	\item $\nu(0^{T^*X}(0_x,0_x)) = (1_x,1_x,1_x)=:1^{(3)}_x \in gr(m_G)$ for all $x\in M$,
	\item for each $l_0=(0_x,0_x)\in X_0, \ x\in M$, the differential $D_{0^{T^*X}(l_0)} \nu$ maps the natural "vertical" lagrangian $\{\frac{d}{dt}|_{t=0}(tA): A\in T_{l_0}^*X \}\subset T_{0^{T^*X}(l_0)}(T^*X)$ onto $E^N_x \subset T_{(1_x,1_x,1_x)}\P_P$, as given in eq. \eqref{eq:defEN}, for some lagrangian complement $N$ of $Im(T1)$ in $(T_1P,\omega|_1)$.
\end{enumerate}
Such a framing $\nu$ is said to be {\bf based on $G_0$} if there exists a symplectomorphism $\mu:(P,\omega) \dtod{1(M)} (T^*M,\omega_c)$ such that $\mu(1(M))=0^{T^*M}(M)$ and $$Im(\nu) =_{1^{(3)}(M)} (\mu\times \mu\times \mu)^{-1}(gr(m_{G_0})),$$ where $G_0$ is the symplectic groupoid structure on $(T^*M,\omega_c)$ given in Example \ref{ex:Gzeropois} which integrates $(M,\pi=0)$.
\end{definition}

Depending on the adapted framing $\nu$ chosen, we will show that we will get a corresponding generating function $S$. The most interesting situation is when $\nu$ describes a simple reference lagrangian neighborhood and the complexity of multiplication $m_G$ is encoded in $S$. In the next subsection, we will see that for a coordinate $M$ the embedding $\nu_c$ defined in \eqref{eq:nucoord} satisfies this property. For arbitrary $M$, a general interesting type of framing is given by those based on $G_0$ and the following Lemma shows that they always exist.
\begin{lemma}\label{lem:existframe}
	For any local symplectic groupoid $G$, there exists an adapted framing near the units for its multiplication map $m_G$ which is based on $G_0$.
\end{lemma}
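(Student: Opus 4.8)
The plan is to produce $\nu$ by two applications of the Lagrangian tubular neighborhood theorem: the first to normalize $(P,\omega)$, the second to realize a prescribed transverse polarization along the target Lagrangian. First I would invoke Weinstein's Lagrangian neighborhood theorem for the Lagrangian submanifold $1(M)\hookrightarrow(P,\omega)$; it yields a germ of symplectomorphism $\mu:(P,\omega)\dtod{1(M)}(T^*M,\omega_c)$ with $\mu\circ 1=\zero$. This $\mu$ is precisely the datum required in the ``based on $G_0$'' clause, and it determines the target Lagrangian
\begin{equation}
L:=(\mu\times\mu\times\mu)^{-1}\big(gr(m_{G_0})\big)\subset\P_P,
\end{equation}
which is Lagrangian because $\mu\times\mu\times\mu:\P_P\dtod{\ut(M)}\P_{T^*M}$ is a symplectomorphism and $gr(m_{G_0})$ is Lagrangian. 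Since ``based on $G_0$'' is exactly the requirement $Im(\nu)=_{\ut(M)}L$, it suffices to build a symplectomorphism $\nu:(T^*X,\omega_c)\dtod{X_0}\P_P$ with image $L$, zero section mapped to $\ut(M)$, and the correct transverse behaviour (2).

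Next I would identify the source with the image: the parametrization $(x,p_1,p_2)\mapsto\big(\mu^{-1}(x,p_1),\mu^{-1}(x,p_2),\mu^{-1}(x,p_1+p_2)\big)$ is a diffeomorphism of $X=T^*M\times_M T^*M$ onto $L$ carrying $X_0$ onto $\ut(M)$, so that the zero section of $T^*X$ lands on $\ut(M)$ and condition (1) holds automatically. I would then use the refined form of the Lagrangian neighborhood theorem: for any Lagrangian subbundle $\mathcal N\subset T\P_P|_L$ complementary to $TL$ along $L$, there is a germ of symplectomorphism $\nu:(T^*X,\omega_c)\dtod{X_0}\P_P$ which restricts to the above identification on the zero section and whose differential carries the vertical Lagrangian of $T^*X$ onto $\mathcal N$ along $L$. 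Thus everything reduces to choosing $\mathcal N$ so that $\mathcal N|_{\ut_x}=E^N_x$ for some Lagrangian complement $N$ of $Im(T1)$ in $(T_{1_x}P,\omega)$.

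The crux — and the step I expect to be the main obstacle — is that condition (2) demands $E^N_x$ be complementary to $T_{\ut_x}L$, whereas the discussion following \eqref{eq:defEN} only guarantees that $E^N_x$ is complementary to $T_{\ut_x}gr(m_G)$; these two tangent spaces do not coincide in general, since the Weinstein chart $\mu$ need not carry the $\a$- and $\b$-fibers of $G$ to cotangent fibers. I would resolve this by a direct computation in the normalized picture. Take $N_x:=(D_{1_x}\mu)^{-1}(V_{0_x})$, the $\mu$-preimage of the vertical Lagrangian $V_{0_x}\subset T_{0_x}(T^*M)$; since $\mu$ is symplectic and $D_{1_x}\mu$ sends $Im(T1)$ to the horizontal Lagrangian $H=Im(T0)$, the subspace $N_x$ is indeed a Lagrangian complement of $Im(T1)$. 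Pushing forward by $\mu\times\mu\times\mu$, one finds $E^N_x$ becomes $\{(h_1,h_2,c):h_1,h_2\in H,\ c\in V_{0_x}\}$, while $T_{\ut_x}L$ becomes $T_{0_x^{(3)}}gr(m_{G_0})=\{(u+h,w+h,u+w+h):u,w\in V_{0_x},\ h\in H\}$. Using $H\oplus V_{0_x}=T_{0_x}(T^*M)$ and $H\cap V_{0_x}=0$, a short linear-algebra check shows these two $3\dim M$-dimensional subspaces intersect trivially, and that $E^N_x$ is isotropic (as $H$ and $V_{0_x}$ are Lagrangian and the ambient form is block-diagonal); hence $E^N_x$ is a Lagrangian complement of $T_{\ut_x}L$.

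Finally, since the Lagrangian complements of a fixed Lagrangian form an affine (contractible) set, I would extend the assignment $x\mapsto E^N_x$ to a Lagrangian complement subbundle $\mathcal N$ of $TL$ over a neighborhood of $\ut(M)$ in $L$ and feed it into the refined Weinstein theorem. The resulting $\nu$ satisfies (1) by construction of the zero-section identification, satisfies (2) since $D\nu$ maps the vertical onto $\mathcal N$ with $\mathcal N|_{\ut_x}=E^N_x$, and has $Im(\nu)=_{\ut(M)}L$, so it is based on $G_0$. This completes the construction of the desired adapted framing.
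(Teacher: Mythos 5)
Your proposal is correct and follows essentially the same route as the paper: a first application of the Lagrangian neighborhood theorem normalizing $(P,\omega,1(M))$ to $(T^*M,\omega_c,0^{T^*M})$ via $\mu$, followed by a second application to the reference Lagrangian $gr(m_{G_0})$ (equivalently, to $L=(\mu\times\mu\times\mu)^{-1}(gr(m_{G_0}))$) with the vertical directions prescribed to land on $E^N$. The transversality of $E^N_x$ to $T_{\ut_x}L$ that you verify by hand is exactly what the paper records in Example \ref{ex:EN0}, where this $E^N_x$ is identified as a Lagrangian complement of $T_{(1_x,1_x,1_x)}gr(m_{G_0})$; so your extra linear-algebra check makes explicit a step the paper leaves implicit.
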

\begin{proof}
	The proof consists in applying the lagrangian tubular neighborhood theorem in two instances. First, choose a lagrangian complement $N$ for $Im(T1)$ inside $T_1P$ as above. By the lagrangian tubular neighborhood theorem, there is a symplectic embedding
	$$ \mu: (T^*M,\omega_c) \dtod{0} (P,\omega), \ \mu(0_x)=1_x,$$
	such that $D_{0_x}\mu$ takes the natural vertical lagrangian in $T_{0_x}(T^*M)$ into $N_x$. Consider the induced local symplectic groupoid structure $\tilde G$ on $\tilde P=T^*M$, so that $\mu$ defines an isomorphism of germs near the units. The Lemma will follow if we show that its statement holds for $\tilde G$. To that end, we observe that, on top of $gr(m_{\tilde G})$, we have an additional lagrangian $L_0=gr(m_0)\subset \P_{T^*M}$ where
	$$ m_0 :X=T^*M\times T^*M\to T^*M, \ a,b\mapsto a+b$$
	which we can use as a reference. (This corresponds to $gr(m_{G_0})$ for the groupoid $G_0$ given in Example \ref{ex:Gzeropois}.) Notice that $(0_x,0_x,0_x)\in gr(m_{\tilde G})\cap L_0$ for each $x\in M$. We then use the lagrangian tubular neighborhood theorem once more, this time applied to $L_0 \subset \P_{T^*M}$, yielding
	$$ \tilde \nu: T^*X \dtod{0^{T^*X}} \P_{T^*M}, \ \tilde \nu(0^{T^*X}(a,b))=(a,b,a+b)$$  
	and chosen so that $D_{0^{T^*X}(0_x,0_x)}\tilde \nu$ maps the natural vertical lagrangian in $T_{0^{T^*X}(0_x,0_x)}(T^*X)$ into the one given in Example \ref{ex:EN0}. This finishes the proof.
\end{proof}

We now show that, relative to any adapted framing near the units, the graph of the multiplication map results \emph{horizontal}.
\begin{lemma}
	Let $G$ be a local symplectic groupoid and $\nu$ an adapted framing for $m_G$ near the units. Then, there exists a smooth $1$-form $\theta\equiv \theta^{G,\nu}: X \dtod{X_0} T^*X$ which is closed, $d\theta=0$, and such that
	$$ \nu^{-1}(gr(m_G)) =_{0^{T^*X}(X_0)} \{(l,\theta|_l): l\in X \text{ near } X_0 \}.$$
	Moreover, the germ of $\theta$ around $X_0$ is uniquely determined by $G$ and $\nu$.
\end{lemma}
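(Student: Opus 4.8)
The plan is to transport $gr(m_G)$ through the symplectomorphism $\nu$ and recognize the result as a \emph{horizontal} lagrangian, i.e. the graph of a closed $1$-form. Since $\nu$ is a symplectic embedding and $gr(m_G)$ is lagrangian in $\P_P$ near $M^{(3)}$, the set $L := \nu^{-1}(gr(m_G))$ is a lagrangian submanifold of $(T^*X,\omega_c)$ near $0^{T^*X}(X_0)$; by condition (1) of Definition \ref{def:adaptedframing} it contains $0^{T^*X}(X_0)$. A dimension count ($\dim X = 3\dim M = \dim L$, with $\dim T^*X = \dim \P_P$) confirms the setup is consistent.

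The key step is to show that $L$ is transverse to the fibers of the cotangent projection $q_X: T^*X \to X$ along $0^{T^*X}(X_0)$, which is exactly what condition (2) of the adapted framing provides. Fix $l_0 = (0_x,0_x)\in X_0$ and set $p := 0^{T^*X}(l_0)$, and let $V_p := \{\tfrac{d}{dt}|_{t=0}(tA): A\in T^*_{l_0}X\}$ be the vertical lagrangian at $p$. Since $\nu$ carries $L$ onto $gr(m_G)$ near $p$ and sends $p$ to $1^{(3)}_x$, its differential maps $T_p L$ isomorphically onto $T_{1^{(3)}_x}gr(m_G)$; by condition (2) it maps $V_p$ onto $E^N_x$. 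Because $E^N_x$ is, by its construction in eq. \eqref{eq:defEN}, a lagrangian complement to $T_{1^{(3)}_x}gr(m_G)$ inside $T_{1^{(3)}_x}\P_P$, applying $(D_p\nu)^{-1}$ yields the direct sum decomposition $T_p L \oplus V_p = T_p(T^*X)$. In particular $D_p(q_X|_L): T_p L \to T_{l_0}X$ is an isomorphism.

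Transversality is an open condition, so $q_X|_L$ is a local diffeomorphism at every point of $0^{T^*X}(X_0)$; since it restricts to the canonical identification $0^{T^*X}(X_0) \cong X_0$, the inverse function theorem applied along this submanifold produces a diffeomorphism from a neighborhood of $0^{T^*X}(X_0)$ in $L$ onto a neighborhood of $X_0$ in $X$. Its inverse is a section $\theta: X \dtod{X_0} T^*X$ with $L =_{0^{T^*X}(X_0)} \{(l,\theta|_l): l\in X\}$, i.e. a $1$-form whose graph is $L$. Because $L$ is lagrangian, the standard dictionary between horizontal lagrangians in $(T^*X,\omega_c)$ and closed $1$-forms forces $d\theta = 0$: pulling $\omega_c$ back along the section gives $d\theta$, which must vanish on the image. (Since $0^{T^*X}(X_0)\subset L$ and $L$ is a graph, one also reads off that $\theta$ vanishes along $X_0$.)

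Finally, uniqueness is immediate: the germ of $L = \nu^{-1}(gr(m_G))$ around $0^{T^*X}(X_0)$ depends only on $G$ and $\nu$, and a horizontal lagrangian determines its generating section pointwise, namely $\theta|_l$ is the unique covector with $(l,\theta|_l)\in L$; hence the germ of $\theta$ around $X_0$ is uniquely fixed. The only genuinely delicate point is the transversality argument of the second paragraph, which is where the adapted-framing hypothesis is used in full; once that direct sum is established, the passage from a pointwise local diffeomorphism to a single graph over a neighborhood of the whole submanifold $X_0$ (rather than of a single point) requires the relative form of the inverse function theorem, and the remaining steps are the classical closed-form/lagrangian correspondence.
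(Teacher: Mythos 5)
Your proof is correct, and its overall skeleton (pull back $gr(m_G)$ to a lagrangian $L\subset T^*X$, prove transversality of $L$ to the cotangent fibers along $0^{T^*X}(X_0)$, invoke the relative inverse function theorem, and read off closedness from the lagrangian condition) is the same as the paper's. Where you genuinely diverge is in the proof of the transversality step, which is the heart of the lemma. You argue abstractly: $D_p\nu$ carries the pair $\bigl(V_p,\,T_pL\bigr)$ onto the pair $\bigl(E^N_x,\,T_{1^{(3)}_x}gr(m_G)\bigr)$, and since the latter is a direct sum decomposition of $T_{1^{(3)}_x}\P_P$ (as recorded right after eq.~\eqref{eq:defEN}, which in turn rests on the description \eqref{eq:T1grm} of $T_{1^{(3)}_x}gr(m_G)$), so is the former; hence $Q=q_X|_L$ has invertible differential. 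The paper instead builds an explicit surjection $A_x: T^*_xM\oplus T^*_xM\oplus T_xM \to T_{l_0(x)}L$ out of hamiltonian vector fields $X^{H_{p_1,p_2}}$ whose flows trace the curves \eqref{eq:curve2} inside $gr(m_G)$, and then checks $Ker(TQ\circ A_x)=0$ in canonical coordinates. Your route is shorter and makes the role of condition (2) of Definition~\ref{def:adaptedframing} completely transparent, at the cost of leaning on the (elementary but not re-derived) fact that $E^N_x$ complements $T_{1^{(3)}_x}gr(m_G)$; the paper's route is more self-contained and, as a by-product, produces the hamiltonian-flow parameterization of $T_{l_0(x)}L$ that is reused later when deriving the integral formulas for $S$. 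The remaining steps — the relative inverse function theorem along all of $X_0$, the closed-form/horizontal-lagrangian dictionary, and the pointwise uniqueness of the generating section — match the paper's argument.
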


\begin{proof}
		We denote 
		$$L:= \nu^{-1}(gr(m_G)) \subset T^*X.$$
		Notice that $L\neq \emptyset$ since it contains $0^{T^*X}(X_0)$ by the definition of adapted framing near the units. Since $\nu$ is a symplectic embedding and $gr(m_G)\subset \P_{T^*M}$ is an embedded submanifold, then $L\subset T^*X$ is an embedded lagrangian submanifold.
		
		Let us denote $Q:L \to X$ the restriction of the canonical projection $T^*X \to X$ to $L$. We first show that the differential of $Q$ at any point $\hat l_0(x):=0^{T^*X}(0_x,0_x)\in L$ is an isomorphism.
		Within this proof, we shall ommit the inclussion $0^{T^*X}$ of $X$ inside $T^*X$ to avoid overcomplicated formulas, and notice that we thus have $\hat l_0(x)\equiv l_0(x)=(0_x,0_x) \in X_0 \subset L$. We want to show that 
		$$D_{l_0(x)}Q: T_{l_0(x)}L \to T_{l_0(x)}X$$ 
		is an isomorphism. Since $dim(L)=3dim(M)=dim(X)$, it is enough to show that the above map is injective.
		
		To that end, for each $x\in M$, consider the linear map
		$$ A_x: T^*_x M \oplus T^*_x M \oplus T_x M \to T_{l_0(x)}(T^*X), \ (p_1,p_2,v) \mapsto X^{\nu^*H_{p_1,p_2}}|_{l_0(x)} + D_xl_0(v) $$ 
		where $x\mapsto l_0(x)\in X_0$ is seen as a map, the hamiltonian vector field of $\nu^*H_{p_1,p_2}$ is taken in $(T^*X,\omega_c)$ and 
		$$ H_{p_1,p_2}: \P_{P} \dtod{\ut_x} \R, (z_1,z_2,z_3) \mapsto -f_1(\b(z_1)) - f_2(\a(z_2)) + \left(f_1(\b(z_3))+f_2(\a(z_3))\right)$$
		with $f_j \in C^\infty(M)$ denoting any functions satisfying $d_xf_j = p_j$ for $j=1,2$. We first claim that
		\begin{equation}\label{eq:ImATL} Im(A_x) =  T_{l_0(x)}L.\end{equation}
		To see this, we recall that $\nu(l_0(x))=\ut_x \in gr(m)$ and compute, using that $\nu$ is symplectic,
		$$ T\nu \circ A_x(p_1,p_2,v) = X^{H_{p_1,p_2}}|_{\ut_x} + D_x\ut(v)$$
		where now the hamiltonian vector field of $H_{p_1,p_2}$ is taken in $\P_{P}$. Notice that the function $H_{p_1,p_2}$ was defined so that its hamiltonian flow starting at $\ut_x \in \P_{T^*M}$ yields the curve given in \eqref{eq:curve2}, which lies in $gr(m)$ by construction. It thus follows that $Im(A_x) \subset  T_{l_0(x)}L$. We now show that $A_x$ is injective, so that eq. \eqref{eq:ImATL} follows by dimension counting. If $(0,0,v)\in Ker(A_x)$ then $v=0$ since $D_x\ut$ is injective.
		Now, consider local canonical coordinates $(x^j,y_j)$ on $(P,\omega)$ defined near $1_x$ so that $1(M)$ is cut out by $y_j=0$. Using that $\a(1_x)=x=\b(1_x)$, it is easy to verify that
		$ X^{H_{p_1,p_2}}|_{\ut_x} = 0$ iff $\partial_{x^j}|_{y=0}f_1(\beta(x,y)) = 0 =\partial_{x^j}|_{y=0}f_2(\a(x,y))$ iff $p_1=p_2=0$. This shows that $A_x$ is injective as wanted.
		
		Finally, to verify that $D_{l_0(x)}Q$ is injective, we only need to show that 
		$$ K_x:=Ker(TQ \circ A_x) = 0.$$
		Suppse that $(0,0,v)\in K_x$, since $Q|_{X\hookrightarrow T^*X} = id_X$ and $l_0$ is an immersion, this implies that $v=0$. Suppose that $(p_1,p_2,0) \in K_x$, this is equivalent to
		$$ TQ(X^{\nu^*H_{p_1,p_2}}|_{l_0(x)}) = 0.$$
		Using the fact that $\nu$ is a symplectic embedding, and that it was chosen so that its differential at $l_0(x)\in X$ maps the natural horizontal-plus-vertical lagrangian splitting $T_{l_0(x)}(T^*X)\simeq T_{l_0(x)}X \oplus T^*_{l_0(x)}X $ into the lagrangian splitting $T_{1^{(3)}_x}L_0 \oplus E_x \simeq T_{\ut_x}\P_{T^*M}$, with $E_x$ given in \eqref{eq:defEN}, we conclude that
		$$ \langle dH_{p_1,p_2}|_{\ut_x}, e\rangle=0 \ \forall e \in E_x \subset T_{\ut_x}\P_P$$
		Using $\a(1_x)=x=\b(1_x)$ again and the definition of $E_x$, we get that, in canonical coordinates near $1_x\in P$ as above, the previous condition implies $p_1=p_2=0$.
		This shows that $K_x=0$, completing the proof of $D_{l_0(x)}Q$ being an isomorphism.
		
		To finish the proof of the Lemma, we recall a well known generalization of the inverse function theorem.
		\begin{ftheorem}\label{thm:geninvfun}
			Let $f:X_1 \to X_2$ be a smooth map between smooth manifolds and $N_j \subset X_j, \ j=1,2$ be embedded submanifolds. Assume that $f|_{N_1}$ defines a diffeomorphism onto $N_2$, and that $D_xf$ is an isomorphism for all $x\in N_1$. Then, there exist open neigborhoods $U_j$ of $N_j$ in $X_j$, $j=1,2$, such that $f|_{U_1}$ is a diffeomorphism onto $U_2$.
		\end{ftheorem}
		
		By theorem \ref{thm:geninvfun}, since $D_{l_0(x)}Q$ is an isomorphism for each $x\in M$, we conclude that $L$ is \emph{horizontal} near $0^{T^*X}$: there exist neigborhoods $V\subset X$ of $X_0$ and $W\subset T^*X$ of $0^{T^*X}$, together with a smooth 1-form $\theta \in \Omega^1(V)$, such that
		$$ L\cap W = \{ (l,\theta|_l): l \in V\}.$$
		Since $L$ is lagrangian, then $d\theta =0$. 
\end{proof}

Finally, we show that $\theta$ admits a potential, thus defining a generating function for $gr(m_G)$ relative to $\nu$.
To that end, we consider the homogeneous structure 
$$h_\lambda(a,b)= (\lambda a, \lambda b), \ \lambda\in \R, (a,b)\in T^*M\times_M T^*M = X.$$
The fixed points of this structure are given by $Im(h_0)=X_0$, and the idea is to use the contracting homotopy induced by $h_\lambda$ on differential forms. 
Since $0^{T^*X}(0^{T^*M}_x,0^{T^*M}_x) \in \nu^{-1}(gr(m_G))$ for all $x\in M$, it follows directly that
$$ h_0^*\theta^{G,\nu} = 0$$
for any adapted framing near the units $\nu$. For any such $\nu$, we consider the induced $\theta\equiv \theta^{G,\nu}$ and we claim that
$$ S:= \int_0^1 du \ \frac{1}{u} i_\Eu h^*_u\theta \text{ satisfies } dS=\theta,$$
where $\Eu|_l = \frac{d}{d\lambda}|_{\lambda=1} h_\lambda(l)\in T_l X$ defines the Euler vector field associated to $h_\lambda$. To verify this claim, first notice that $S$ is well defined in a neighborhood of $X_0\subset X$ since $u \mapsto h^*_u\theta$ is smooth in $u$ and vanishes when $u=0$, as shown before. To verify that $S$ is a potential for $\theta$, one uses (as in a relative Poincar\'e Lemma)
$$ \theta = h_1^*\theta = h_0^*\theta + \int_0^1 du \ \frac{d}{du}[h_u^*\theta]$$
and compute $\frac{d}{du}[h_u^*\theta]= \frac{1}{u}d(h^*_u i_\Eu \theta)$ from $h_{u_1u_2} = h_{u_1}\circ h_{u_2}$.
The function $S$ thus defined satisfies $S(l_0)=0$ for any $l_0\in X_0$, as a consequence of $h_0(l_0)=l_0$ and $\Eu|_{l_0} =0$. 
Lastly, we observe that the germ of $S$ near $X_0$ is uniquely determined by $G$, $\nu$ and the condition  $S|_{X_0}=0$. Indeed, $\theta$ is completely determined by $G$ and $\nu$ near $X_0$, so if there is another $\tilde S$ satisfying $\tilde S(l_0)=0 \forall l_0\in X_0$ and also generating $gr(m_G)$ through $\nu$,  then, $d\tilde S=dS$ as a consequence of the previous formula, and the possible additive constant for the germ of $\tilde S - S$ around each connected component of $X_0\simeq M$ is fixed to zero by $\tilde S(l_0)=0 \forall l_0\in X_0$. We note that, when $M$ is connected, the germ of $S$ is determined by $S(l_0)=0$ for any particular $l_0\in X_0\simeq M$. 

We summarize the above results in the following:
\begin{theorem}\label{thm:genexist}
	Every local symplectic groupoid $G$ admits an adapted framing $\nu$ for $m_G$ near the units,
	$$ \nu: T^*X \dtod{0^{T^*X}(X_0)} \P_P, \ X=T^*M\times_M T^*M\supset X_0=\{(0_x,0_x): \ x\in M \}, \ \nu(0^{T^*X}(0_x,0_x))=(1_x,1_x,1_x),$$ 
	which can be taken based on $G_0$ (see Definition \ref{def:adaptedframing}). For every adapted framing $\nu$, there exists a unique germ of functions  $S\equiv S_\nu:X \dtod{X_0} \R$ such that $(\nu,S)$ defines generating function data for $G$,
	$$  \ \nu^{-1}(gr(m_G))=_{0^{T^*X}(X_0)} \{ (l,dS|_l): l\in X \text{ near } X_0  \}, \text{ and }S|_{X_0}=0.$$
\end{theorem}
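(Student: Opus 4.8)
The plan is to assemble the theorem from the three preparatory results established above, since each clause corresponds to one of them. The existence of an adapted framing $\nu$ based on $G_0$ is precisely Lemma \ref{lem:existframe}, so the first clause requires nothing new; I would simply invoke it, recalling that its proof applies the lagrangian tubular neighborhood theorem twice — once to identify $(P,\omega)$ near $1(M)$ with $(T^*M,\omega_c)$ near the zero section through a symplectomorphism $\mu$ sending the chosen complement $N$ to the vertical lagrangian, and once more to frame the reference graph $gr(m_0)\subset \P_{T^*M}$.

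For the second clause I would fix an arbitrary adapted framing $\nu$ and set $L := \nu^{-1}(gr(m_G)) \subset T^*X$, which is an embedded lagrangian submanifold containing $0^{T^*X}(X_0)$. The key input is the horizontality statement proved in the Lemma preceding the theorem: the base projection $Q: L \to X$ restricts to the identity on $X_0$ and has bijective differential at each $l_0(x)=(0_x,0_x)$, so by the Folklore Theorem \ref{thm:geninvfun} it is a diffeomorphism onto a neighborhood of $X_0$. This exhibits $L$ near $X_0$ as the graph of a $1$-form $\theta \equiv \theta^{G,\nu}$, which is closed because $L$ is lagrangian, and which satisfies $h_0^*\theta = 0$ because $X_0 \subset L$.

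To produce the generating function I would run the relative Poincar\'e lemma argument already set up in the text, using the homogeneous structure $h_\lambda(a,b)=(\lambda a,\lambda b)$ on $X = T^*M\times_M T^*M$ with Euler field $\Eu$ and setting
$$ S := \int_0^1 \frac{du}{u}\, i_\Eu h_u^*\theta, $$
which is well defined near $X_0$ since $u\mapsto h_u^*\theta$ is smooth and vanishes at $u=0$. The identity $h_{u_1 u_2} = h_{u_1}\circ h_{u_2}$ gives $\frac{d}{du}h_u^*\theta = \frac{1}{u}\, d(h_u^* i_\Eu \theta)$, whence $dS = h_1^*\theta - h_0^*\theta = \theta$; and $S|_{X_0}=0$ follows from $\Eu|_{X_0}=0$. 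Thus $(\nu,S)$ is generating function data for $G$. Uniqueness is immediate: any competitor $\tilde S$ generating the same $L$ has $d\tilde S = \theta = dS$, so $\tilde S - S$ is locally constant near each connected component of $X_0 \simeq M$, and the normalization $\tilde S|_{X_0} = 0 = S|_{X_0}$ pins this constant to zero.

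I expect the main obstacle to lie entirely in the horizontality step, i.e. in verifying that $D_{l_0(x)}Q$ is an isomorphism — this is what forces the adapted-framing hypotheses to be used in full. The natural way to organize it, as in the preceding Lemma, is to realize $T_{l_0(x)}L$ as the image of the map $A_x$ assembling the hamiltonian vector fields $X^{\nu^* H_{p_1,p_2}}$ with the tangents to $X_0$, using that the flow of $H_{p_1,p_2}$ through $\ut_x$ traces the curve $\gamma_{f_1,f_2,x}$ of \eqref{eq:curve2} inside $gr(m)$; injectivity of $TQ\circ A_x$ then reduces, via condition (2) of Definition \ref{def:adaptedframing} and canonical coordinates near $1_x$ in which $\a(1_x)=x=\b(1_x)$, to the implication $p_1=p_2=0$. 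Everything else is formal manipulation of the tubular-neighborhood normal forms and the contracting homotopy.
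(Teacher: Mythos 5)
Your proposal is correct and follows essentially the same route as the paper: the theorem there is explicitly a summary of Lemma \ref{lem:existframe}, the horizontality lemma (via the map $A_x$ and the Folklore inverse function theorem), and the contracting-homotopy construction of $S$ with the uniqueness argument from the normalization $S|_{X_0}=0$. You have correctly identified the horizontality of $L=\nu^{-1}(gr(m_G))$ as the only substantive step and reproduced its mechanism faithfully.
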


\begin{remark}\label{rmk:microgeom}
Let us explain the relation of the above results to the theory of symplectic microgeometry; in particular to \cite{CDW2,CDW3}. First, in \cite{CDW3} it was shown that the germ of local symplectic groupoid multiplication defines a \emph{symplectic micromorphism}. On the other hand, in \cite{CDW2} it was shown that symplectic micromorphisms in general admit a description through generating functions. More specifically, after taking $G$ to a normal form, $P\simeq T^*M$, the pair $(gr(m_G),C:=1^{(3)}_M)$ defines a \emph{lagrangian submicrofold} in $(\overline{T^*M}\times \overline{T^*M} \times T^*M, M^3)$ which turns out to be a \emph{conormal deformation} of the conormal $N^*C$ to $C$, see \cite[Def. 6]{CDW2}. Noting that $N^*C \simeq T^*M\times_M T^*M$, our adapted framing $\nu$ corresponds to the symplectomorphism germ $K$ appearing in that definition together with an associated $1$-form germ $\beta$ (see \cite[Rmk. 7]{CDW2}) which corresponds to the $\theta_{G,\nu}$ in the Lemma above. The fact that general lagrangian submicrofolds are described by these kind of generating function data is proven in \cite[Thm. 8 and Rmk. 10]{CDW2}, assuming a clean intersection hypothesis with the ambient's zero section, and using similar arguments as in the Lemma above. The arguments presented in this subsection can be thus seen as a specialization of similar ones appearing in a more general context in \cite{CDW2}.
\end{remark}

\subsection{Generating functions in coordinate spaces}\label{subsec:generalgenfuncs}
In the following subsections, we will restrict our attention to coordinate Poisson manifolds and derive explicit formulas providing their integration. The setting is as follows: we say that $M$ is a \emph{coordinate space} if it is an open subset of a (finite dimensional, real) vector space; in this case we denote $M^*$ the dual vector space, so that $T^*M=M \times M^*$. We will denote by
$$q(x,p)=x, r(x,p)=p$$
the two projections onto $M$ and $M^*$, respectively. We also use the notation 
$$ x \mapsto p(x), \ p \in M^*$$
when regarding $p:M \to \R$ as a linear function on $M$.
Across the following subsections, we will consider a fixed (smooth) Poisson structure $\pi\equiv \pi^{ij}(x)$ on $M$ and refer to $(M,\pi)$ as to a {\bf coordinate Poisson manifold}.
We will also restrict our attention further to local symplectic groupoids $G$ integrating a coordinate Poisson manifold $(M,\pi)$ such that $$(P_G,\omega_G,M_G)=(T^*M,\omega_c,0^{T^*M}).$$

\label{subsub:sga}

We recall from the introduction that a \emph{coordinate generating function} for such a $G$ is a smooth function 
\begin{equation}\label{eq:defScoord} S: X \dtod{{X_0}} \R, \ X:= M^*\times M^* \times M \supset X_0 := \{(0,0,x):x \in M\}\end{equation}
such that $(\nu_c,S)$, with $\nu_c$ given in \eqref{eq:nucoord},  defines generating function data for $G$, i.e.
\begin{equation*}
gr(m_G) =_{M^{(3)}} \{ ((\partial_{p_1}S,p_1) , (\partial_{p_2 }S, p_2), (x,\partial_x S)): (p_1,p_2,x)\in X \}\subset T^*M\times T^*M \times T^*M.
\end{equation*}
The existence of such coordinate generating functions follows directly from the general Theorem \ref{thm:genexist}.
\begin{corollary}\label{prop:existS}
	Let $G$ be a local symplectic groupoid integrating the coordinate Poisson manifold $(M,\pi)$ satisfying $(P_G,\omega_G,1)=(T^*M,\omega_c,0^{T^*M})$. Then, there exists a coordinate generating function $S:X\dtod{X_0} \R$ for $G$, as in eq. \eqref{eq:defScoord}, whose germ  near $X_0$ is uniquely defined by $S|_{X_0}=0$.
\end{corollary}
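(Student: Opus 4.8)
The plan is to exhibit $\nu_c$, after one fiberwise correction, as an \emph{adapted framing} for $m_G$ in the sense of Definition \ref{def:adaptedframing}, and then to invoke Theorem \ref{thm:genexist} directly. First I would record the identification $X = T^*M\times_M T^*M \cong X_c = M^*\times M^*\times M$, $((x,p_1),(x,p_2))\mapsto (p_1,p_2,x)$, under which $X_0$ corresponds to $\{(0,0,x)\}$, and observe that the coordinate generating condition \eqref{eq:condS} says exactly that $\nu_c^{-1}(gr(m_G))$ is the graph of the exact $1$-form $dS$ near $X_0$. Thus a coordinate generating function exists precisely when this Lagrangian is horizontal and admits a potential vanishing on $X_0$.

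The one obstruction to applying Theorem \ref{thm:genexist} verbatim is that $\nu_c$ does not map the zero section to $\ut$: a short computation using \eqref{eq:nucoord} gives $\nu_c^{-1}(\ut_x) = (0,0,x;\,x,x,0)$, which has nonzero cotangent fibre. I would absorb this by a fiberwise translation. Let $S_0(p_1,p_2,x) := p_1(x)+p_2(x)$ be the (free) coordinate generating function of the addition groupoid $G_0$ of Example \ref{ex:Gzeropois}, and let $\sigma:T^*X_c\dto T^*X_c$, $(l,\zeta)\mapsto (l,\zeta+dS_0|_l)$, be the associated fiberwise translation, which is a symplectomorphism since $dS_0$ is closed. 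Because $dS_0|_{(0,0,x)}=(x,x,0)$ in fibre coordinates, the composition $\nu := \nu_c\circ\sigma$ satisfies $\nu(0^{T^*X_c}(0,0,x)) = \nu_c(0,0,x;x,x,0) = \ut_x$, giving condition (1) of Definition \ref{def:adaptedframing}. Moreover $\sigma$ preserves the vertical Lagrangian, while $D\nu_c$ carries it onto $E^N_x$ exactly as computed in Example \ref{ex:EN0}; hence condition (2) holds as well, and $\nu$ is an adapted framing for $m_G$ near the units.

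With this in hand I would apply Theorem \ref{thm:genexist} to $\nu$, producing a unique germ $S_\nu:X\dtod{X_0}\R$ with $\nu^{-1}(gr(m_G))=_{0^{T^*X}(X_0)}\{(l,dS_\nu|_l)\}$ and $S_\nu|_{X_0}=0$. Since $\nu_c^{-1}(gr(m_G)) = \sigma(\nu^{-1}(gr(m_G)))$ and $\sigma(l,\zeta)=(l,\zeta+dS_0|_l)$, the Lagrangian $\nu_c^{-1}(gr(m_G))$ is the graph of $d(S_\nu+S_0)$. Setting $S := S_\nu + S_0$ therefore yields a coordinate generating function for $G$ in the sense of \eqref{eq:condS}, and $S|_{X_0} = S_\nu|_{X_0}+S_0|_{X_0}=0$ because $S_0$ vanishes where $p_1=p_2=0$. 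Uniqueness of the germ of $S$ subject to $S|_{X_0}=0$ follows at once from the uniqueness of $S_\nu$ in Theorem \ref{thm:genexist}, since $S_0$ is canonically fixed.

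Finally I would note where the real content sits. The substantive work — horizontality of $\nu_c^{-1}(gr(m_G))$ via the generalized inverse function theorem (Folklore Theorem \ref{thm:geninvfun}) and construction of the potential via the contracting homotopy of $\homp_\lambda$ — is already packaged inside Theorem \ref{thm:genexist}. The only genuinely new step is the bookkeeping of the second and third paragraphs: checking that translation by $dS_0$ converts $\nu_c$ into a bona fide adapted framing while shifting the resulting potential by exactly the free generating function $S_0$. I expect the verification of condition (1), namely the identity $\nu_c^{-1}(\ut_x)=(0,0,x;x,x,0)$, to be the single point demanding care, as it is what forces the appearance of $S_0$ and explains why the coordinate generating function fails to be flat along $X_0$.
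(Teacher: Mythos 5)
Your argument is correct and follows the same strategy as the paper: reduce everything to Theorem \ref{thm:genexist} by exhibiting an adapted framing. The one place where you diverge is actually a point where your write-up is \emph{more} careful than the paper's. The paper's proof simply asserts that $\nu_c$ of eq. \eqref{eq:nucoord} is an adapted framing "immediately" because $1_x=0_x$, checking only that the vertical Lagrangian lands in $E^N_x$ of Example \ref{ex:EN0}. As you observe, condition (1) of Definition \ref{def:adaptedframing} fails for $\nu_c$ as literally stated: the zero covector of $T^*X_c$ over $(0,0,x)$ is sent to $((0,0),(0,0),(x,0))$, not to $\ut_x=((x,0),(x,0),(x,0))$, so in particular $\nu_c^{-1}(gr(m_G))$ does not contain the zero section over $X_0$ and the resulting potential cannot satisfy $dS|_{X_0}=0$. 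Your repair --- precomposing with the fibrewise translation $\sigma$ by $dS_0$, which is a symplectomorphism preserving verticals, noting $dS_0|_{(0,0,x)}=(x,x,0)$ in fibre coordinates so that $\nu=\nu_c\circ\sigma$ does hit $\ut_x$, and then undoing the translation to shift the potential by exactly $S_0=(p_1+p_2)(x)$ --- is exactly the bookkeeping needed to make the reduction to Theorem \ref{thm:genexist} airtight, and it also explains structurally why every coordinate generating function has the affine leading term $(p_1+p_2)(x)$ along $X_0$. (An equivalent alternative, closer in spirit to the proof of Theorem \ref{thm:genexist} itself, is to rerun the horizontality and contracting-homotopy arguments directly on $L=\nu_c^{-1}(gr(m_G))$ around the section $l\mapsto dS_0|_l$ rather than around the zero section; one checks that $h_0^*\theta=0$ still holds because $\theta|_{(0,0,x)}$ has no $dx$-component. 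Your translation trick packages this without redoing any analysis.) No gaps.
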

\begin{proof}
	By Theorem \ref{thm:genexist}, we only need to check that $\nu_c$ given in eq. \eqref{eq:nucoord} defines an adapted framing near the units for $m_G$. By the hypothesis that $1_x = 0_x\in T^*M$, this follows immediately: indeed, the differential of $\nu_c$ at $A=0^{T^*X}(0,0,x)$ maps the vertical lagrangian subspace inside $T_A(T^*X)$ into the $E_x^N$ given in Example \ref{ex:EN0}.
\end{proof}
 
\begin{remark} (coordinate generating functions vs. general ones)
	Suppose that $M$ is a coordinate space. Notice that there can be adapted frames near the identities, $\nu$, which are different from the coordinate induced one $\nu_c$ of eq. \eqref{eq:nucoord}. In particular, denoting $gr(m_0)\in \P_{T^*M}$ the graph of covector multiplication as in Example \ref{ex:Gzeropois},
	$$ \nu_c^{-1}(gr(m_0)) = \{ ((p_1,x),(p_2,x),(x,p_1+p_2)\} \subset T^*(M^*\times M^* \times M),$$
	while a natural alternative choice of $\nu$, which uses $gr(m_0)$ as a reference (as in Lemma \ref{lem:existframe}), leads to
	$$ \nu^{-1}(gr(m_0)) = \{ ((p_1,0),(p_2,0),(x,0)\} \subset T^*(M^*\times M^* \times M).$$
	The corresponding generating functions are thus also different, even for the same $G_0$:
	$$ S_{\nu_c}(p_1,p_2,x) = (p_1+p_2)x, \ \ S_{\nu} = 0.$$
\end{remark}

We observe that if $S$ is a coordinate generating function for $G$, since $\zero(x)=(x,0)$ defines the groupoid identities, then the source $\a=\a_G$ and target $\b=\b_G$ maps of $G$ are necessarily given by
$$ \a(x,p)=\partial_{p_2}S(p,0,x) $$
$$ \b(x,p) = \partial_{p_1}S(0,p,x)$$
for $p\sim 0$. We also observe that the inversion map is given by $inv(x,p)=(x,-p)$ iff 
$$\partial_{p_1}S(p,-p,x)=\partial_{p_2}S(p,-p,x) \text{ and } \partial_xS(p,-p,x) = 0,$$
for all $(x,p)$ close to $\zero$. This is equivalent to the condition that the function $T^*M \dtod{\zero} \R, \ (x,p)\mapsto S(p,-p,x)$ is locally constant.

\begin{remark}\label{rmk:SforGop}
	If $S$ is a coordinate generating function for a $G$ integrating $(M,\pi)$, then the same $S$ is also a coordinate generating function for $\overline{G}$ in which the symplectic structure is the opposite (recall Remark \ref{rmk:Gopom}), which integrates $(M,-\pi)$.
\end{remark}


\begin{example}\label{ex:S0} We know that $G_{0}$, as defined in Example \ref{ex:Gzeropois} by addition of covectors in $(T^*M,+)$, yields an integration of $(M,\pi=0)$. Using that $M$ is a coordinate space, it is easy to verify that
	$$ S_0(p_1,p_2,x) := (p_1+p_2)(x) $$
	defines a coordinate generating function for $G_0$. In the case $\pi$ a constant Poisson structure on $M$, an integration $G_\pi$ will be described in Example \ref{ex:Gpiconst} below, and the function
	$$ S(p_1,p_2,x):=(p_1+p_2)(x) + \frac{1}{2} \pi(p_1,p_2)$$
	defines a coordinate generating function for $G_\pi$. 
\end{example}

\begin{example}\label{ex:Sbch}
	Let $\gg$ be a Lie algebra and consider $M=\gg^*$ endowed with the linear Poisson structure $\tilde \pi$, as defined in Example \ref{ex:cotangentH} together with its integration $T^*H_\gg$. Then, the function
	$$S(p_1,p_2,x): = BCH(p_1,p_2)(x),$$
	with $BCH$ denoting the Baker-Campbell-Hausdorff series for $\gg$ (see Example \ref{ex:Hbch}), defines a coordinate generating function for $T^*H_\gg$. This can be verified by checking \eqref{eq:condS} directly using the definition of the multiplication $\tilde m$ on $T^*H_\gg$.
\end{example}

Suppose that $S$ is a coordinate generating function for $G$. The associativity condition for the multiplication $m$ of $G$ results equivalent, for connected $M$, to system of partial differential equations for $S$ which was considered in \cite[\S 1.2]{CDF} where it  was called the \emph{symplectic groupoid associativity} (SGA) equation: for $(p_1,p_2,p_3,x)$ near $0\times 0 \times 0 \times M\subset M^*\times M^*\times M^* \times M$, 
\begin{equation}\label{eq:SGA} S(p_1,p_2,\bar x) + S(\bar p,p_3,x) - \bar p(\bar x) = S(p_1, \tilde p,x) + S(p_2,p_3,\tilde x) - \tilde p (\tilde x),\end{equation}
where $\bar x, \bar p, \tilde x, \tilde p$ are defined as functions of $(p_1,p_2,p_3,x)$ via
\[ \ \bar x=\partial_{p_1}  S(\bar p,p_3,x), \ \ \bar p=\partial_x S(p_1,p_2,\bar x), \  \tilde x = \partial_{p_2} S(p_1,\tilde p,x)  , \ \tilde p = \partial_x S(p_2,p_3,\tilde x).  \]
In \cite{CDF}, they proceed to obtain a formal family of solutions of the SGA equation of the form $S_\e(p_1,p_2,x) \in C^\infty(X)[[\e]]$, where $\e$ is a formal parameter. The authors focus on families which define formal deformations of the solution $S_0$ associated to $\pi=0$ (Example \ref{ex:S0} above) and their general solution is based on the tree-level part of Kontsevich's star product, which will be discussed in Section \ref{sec:formal} below. The analogue of the SGA equations for general (non-coordinate) Poisson manifolds is studied in \cite{Yuki}.

Observe that the data of the germ of the local symplectic groupoid $G$ on $P= T^*M$ admitting a coordinate generating function is equivalent to the data provided by a function $S:X \to \R$ which satisfies the SGA equation. To directly extract the underlying Poisson structure on $M$, using the fact that $\a$ must be a Poisson map and $1_x=(x,0)$, we arrive to the relation 
\begin{equation} \label{eq:pifromS} \pi^{ij}|_x = 2\partial_{p_{1i}}\partial_{p_{2j}}S|_{(0,0,x)}.\end{equation}
In this case, we simply say that \emph{the solution of the SGA equation $S$ integrates $(M,\pi)$}.

\begin{remark} (Non-formal solutions to the SGA equation)
	Corollary \ref{prop:existS} can be seen as a Lie-theoretic method, with focus on the underlying local symplectic groupoid structure, to produce explicit, non-formal, smooth solutions for the SGA equation. One advantage of this method is that it is not perturbative (i.e. not solving for successive  corrections along $t\pi$ to the case $t=0$) but, rather, a consequence of smooth implicit methods. Moreover, we will see below that the solution can be characterized by explicit analytic formulas.
\end{remark}

\begin{remark} (The SGA equation and lagrangian relations)\label{rmk:lagrels} 
	Let us consider lagrangian relations between symplectic manifolds, as described in e.g. \cite{GSbook} (see also \cite{CDW2}). The multiplication map $m$ of $G$ can be understood as a lagrangian relation
	$$ m:T^*M \times T^*M \dto T^*M \text{ meaning } gr(m)\hookrightarrow \overline{T^*M} \times \overline{T^*M} \times T^*M \text{ is lagrangian,} $$
	where cotangent bundles are endowed with their canonical symplectic structures, $\omega_c$, and the overline denotes the opposite symplectic structure, $-\omega_c$. The composition $m\circ (m\times id):T^*M\times T^*M \times T^*M \dto T^*M$ also defines a a lagrangian relation and we denote the underlying lagrangian submanifold by
	$$ \Gamma_{(12)3} \hookrightarrow \overline{T^*M} \times \overline{T^*M} \times \overline{T^*M} \times T^*M.$$
	Just as $gr(m)$ admits generating function data $(\nu_c,S)$, the lagrangian $\Gamma_{(12)3}$ admits generating function data $(\nu_c',S_{(12)3})$ where $\nu_{c}': T^*(M^*\times M^* \times M^* \times M) \simeq \overline{T^*M} \times \overline{T^*M} \times \overline{T^*M} \times T^*M$ is the obvious extension of $\nu_c$ and
	$$ S_{(12)3}:M^*\times M^* \times M^* \times M \dtod{0\times 0 \times 0 \times M} \R, \ S_{(12)3}(p_1,p_2,p_3,x):= S(p_1,p_2,\bar x) + S(\bar p,p_3,x) - \bar p(\bar x),$$
	where $\bar x, \bar p$ are the functions of $p_1,p_2,p_3,x$ induced by $S$ as defined in eq. \eqref{eq:SGA}. Analogously, $m\circ(id\times m)$ defines a lagrangian relation with underlying $\Gamma_{1(23)}\hookrightarrow \overline{T^*M} \times \overline{T^*M} \times \overline{T^*M} \times T^*M$ admiting generating function data $(\nu_c',S_{1(23)})$ with
	$$ S_{1(23)}(p_1,p_2,p_3,x):=  S(p_1, \tilde p,x) + S(p_2,p_3,\tilde x) - \tilde p (\tilde x),$$
	and $\tilde x,\tilde p$ the functions defined as in eq. \ref{eq:SGA}. The SGA equation is then equivalent to the statement that the two generating functions coincide,
	$$ S_{(12)3}  =_{0\times 0 \times 0 \times M} S_{1(23)}.$$
	Notice that this implies the associativity identity $m\circ (m\times id) = m\circ(id \times m)$ near the identities. The structure of the generating functions $S_{(12)3}$ and $S_{1(23)}$ can be understood in terms of composition of lagrangian relations, following the general procedure of \cite[\S 5.6, \S 5.7]{GSbook} (see also \cite[\S 3.3]{CDW2}). We mention that, in this procedure, the terms appearing in $S_{(12)3}$ and $S_{1(23)}$ which involve the function $\px(x,p)=p(x)$ come from the need of inserting the 'flip' transformation 
	$$ \Fou: T^*M \to \overline{T^*(M^*)}, (x,p) \mapsto (\tilde p, \tilde x)= (p,x)$$
	after applying $(m\times id)$ and $(id\times m)$ and before composing with $m$. Following \cite{GSbook} further, we observe that $\Fou$ is the 'semiclassical limit' version of Fourier transform in the context of Fourier integral operators. Moreover, the oscillatory integrals underlying the SGA equation where identified in \cite{CDF} and related to quantization formulas (see also Section \ref{subsec:quant} below and \cite{CDW4}).
	%
	%
	%
\end{remark}

\begin{remark} (The simplicial meaning of the SGA)\label{rmk:simplSGA}
	The function $S$ induces an isomorphism $I_S:X \dtod{X_0} gr(m)$. Composing $S:X \to \R$ with the inverse $I_S^{-1}$ and with the identification $gr(m)\simeq G^{(2)}, (z_1,z_2,m(z_1,z_2))\mapsto (z_1,z_2)$, we obtain a \emph{local $2$-cochain for $G$}, $ \tilde S : G^{(2)} \dtod{ M^{(2)} } \R.$
	The SGA equation \eqref{eq:SGA} is equivalent to 
	$$ \delta \tilde S =_{M^{(3)}} F : G^{(3)} \dtod{M^{(3)}} \R,$$
	where $\delta$ denotes the simplicial differential on local groupoid cochains and
	$$ F(z_1,z_2,z_3) = \px (m(z_2,z_3)) - \px (m(z_1,z_2)), \ \px:T^*M \to \R, \px (x,p)=p(x).$$
\end{remark}

\subsubsection*{Integral formulas characterizing $S$}\label{subsub:formulasS}


We now show how to obtain formulas for a given coordinate generating function $S$ for $G$, expressed as integrals along flows. 
Let
$$ I_S: M^* \times M^* \times M  \ _{(0 \times 0 \times M)} \dto T^*M \times T^*M \times T^*M $$
$$I_S(p_1,p_2,x)= ((\partial_{p_1}S,p_1) , (\partial_{p_2 }S, p_2), (x,\partial_x S))$$
so that $Im(I_S)$ coincides with the graph of multiplication $gr(m)$ near the identities embedding $M^{(3)}\hookrightarrow T^*M \times T^*M \times T^*M$. 
The idea is to consider a curve $$u\mapsto \gamma(u)=((x_1(u),p_1(u)),(x_2(u),p_2(u)),(x_3(u),p_3(u)) ) \in gr(m)$$ which stays close enough to the identities $M^{(3)}$ so that, since $S$ is a generating function,
\begin{equation}\label{eq:gaIS}
\gamma(u) = I_S(p_1(u), p_2(u),x_3 (u) ).
\end{equation}
Using
$$ S((p_1(1),p_2(1),x_3(1) ) = S(p_1(0),p_2(0),x_3(0) ) + \int_0^1du \ \frac{d}{du}S(p_1(u),p_2(u),x_3(u) )$$ 
and expressing $\partial_{p_1}S$, $ \partial_{p_2}S$ and $\partial_x S$ in terms of curve components via \eqref{eq:gaIS}, we arrive at the following formula:
\begin{equation}\label{eq:genScurve}
S((p_1(1),p_2(1),x_3(1) ) = S(p_1(0),p_2(0),x_3(0) ) + \int_0^1du \ \left[ \dot p_1(u) (x_1(u)) + \dot p_2(u) (x_2(u)) + p_3(u)(\dot x_3(u))\right]
\end{equation}
To get concrete expressions, we first consider the 2-parameter family 
$$ (u_1,u_2) \mapsto \gamma(u_1,u_2) \in gr(m) $$
defined by eq. \eqref{eq:2parfam} with $f_1=p_1\sim 0$, $f_2=p_2 \sim 0$ and $x=x_0$, and apply \eqref{eq:genScurve} twice. After some standard calculus manipulations involving $\{\b^*p_1,\a^*p_2 \}_c =0$, one arrives to the system

\begin{eqnarray}
S(rz_1(1),rz_2(1),qz_3(1,1)) =& S(0,0,x_0) + \px (z_1(1)) + \px (z_2(1)) + \nonumber \\
& \ +  \int_0^1 	du_1 \int_0^1 du_2 \{L_E(\a^*p_2),\b^*p_1, \}_c(z_3(u_1,u_2)), \nonumber \\
z_1(u_1)= \phi^{\b^*p_1}_{u_1}(x_0,0), \ \  z_2(u_2)& =  \phi^{\a^*p_2}_{u_2}(x_0,0) , \ \ z_3(u_1,u_2)=\phi^{\b^*p_1}_{u_1}\phi^{\a^*p_2}_{u_2}(x_0,0) ,
\label{eq:S2parint}
\end{eqnarray}
where we recall that $E=p\partial_p$ is the Euler vector field on $T^*M\to M$ and $\px (x,p)= p(x)$.
In particular, when $S(0,0,x_0)=0 \ \forall x_0$ and taking $p_2=0$ and $p_1=0$ alternatively, we obtain
\begin{equation}\label{eq:Sp0x} S(p,0,x) = p(x) = S(0,p,x) \end{equation}
for $p\sim 0$.
Finally, we obtain a formula for $S$ which only uses the strict symplectic realization data $(T^*M,\omega_c,0^{T^*},\a)$ underlying $G$.

\begin{proposition}
	Let $\a:T^*M \dtod{\zero} M$ be smooth map such that $(T^*M,\omega_c,0^{T^*},\a)$ defines a strict symplectic realization of the coordinate Poisson $(M,\pi)$. Denote by $G=\G(T^*M,\omega_c,0^{T^*},\a)$ the induced local symplectic groupoid and $S$ a representative of the unique germ of coordinate generating functions for $G$ given in Corollary \ref{prop:existS}. Then, for all $(p_1,p_2,x_1)$ with $p_1,p_2$ small enough, the following system of equations holds
	\begin{eqnarray}\label{eq:Sint1dgen}
	S(p_1,rz_2(1),qz_3(1))  = p_1(x_1) + \px (z_2(1)) + \int_0^1du \ [L_E(\a^*p_2)(z_2(u)) - L_E(\a^*p_2)(z_3(u))], \nonumber \\
	z_2(u)=\phi_u^{\a^*p_2}(\a(x_1,p_1),0),  \ z_3(u)=\phi_u^{\a^*p_2}(x_1,p_1).
	\end{eqnarray}
\end{proposition}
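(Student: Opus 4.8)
The plan is to specialize the general formula \eqref{eq:genScurve} to the one-parameter family \eqref{eq:curve1}. Concretely, I would set $z=(x_1,p_1)$ and $f=p_2$, obtaining the curve
$$\g(u) = \big((x_1,p_1),\, z_2(u),\, z_3(u)\big), \quad z_2(u)=\ham^{\a^*p_2}_u(\a(x_1,p_1),0), \ z_3(u)=\ham^{\a^*p_2}_u(x_1,p_1),$$
exactly as in the statement. The reason $\g(u)\in gr(m)$ is the left-invariance of the flow of $\a^*p_2$ recalled in Section \ref{subsec:streal}, namely $\ham^{\a^*p_2}_u(z)=m\big(z,\ham^{\a^*p_2}_u(\a(z))\big)$, which holds for every $z$ near the units and not only for those of the special form in \eqref{eq:curve1}. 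For $p_1,p_2$ small the curve stays close to $M^{(3)}$, so \eqref{eq:gaIS} applies and \eqref{eq:genScurve} is valid along it.

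Next I would read off the components and evaluate the endpoints. The first slot is constant, $z_1(u)\equiv(x_1,p_1)$, so $\dot p_1\equiv 0$ and the first term of the integrand in \eqref{eq:genScurve} drops out. At $u=1$ the left-hand side becomes $S(p_1, rz_2(1), qz_3(1))$, while at $u=0$ one has $z_2(0)=(\a(x_1,p_1),0)$ and $z_3(0)=(x_1,p_1)$, so the boundary value is $S(p_1,0,x_1)$; invoking the normalization $S|_{X_0}=0$ and \eqref{eq:Sp0x}, this equals $p_1(x_1)$.

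The remaining task is to rewrite the surviving integrand $\dot p_2(u)(x_2(u)) + p_3(u)(\dot x_3(u))$ in terms of $L_E(\a^*p_2)$. Here I would use the Hamilton equations \eqref{eq:generalHam} for $H=\a^*p_2$, which give $\dot x^j=-\partial_{p_j}(\a^*p_2)$ along both $z_2$ and $z_3$; contracting with the momentum components and using $E=p_j\partial_{p_j}$ yields the identity $p(\dot x)=-L_E(\a^*p_2)$ at the corresponding point. This handles $p_3(\dot x_3)=-L_E(\a^*p_2)(z_3(u))$ immediately. For the $\dot p_2(x_2)$ term I would instead integrate the Leibniz identity $\tfrac{d}{du}\px(z_2(u))=\dot p_2(x_2)+p_2(\dot x_2)$ and substitute $p_2(\dot x_2)=-L_E(\a^*p_2)(z_2(u))$; the total-derivative piece integrates to $\px(z_2(1))-\px(z_2(0))$, and $\px(z_2(0))=0$ since $z_2(0)$ lies on the zero section. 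Collecting the boundary term $p_1(x_1)$, the contribution $\px(z_2(1))$ and the two Euler terms reproduces \eqref{eq:Sint1dgen}.

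The one genuinely delicate point is the sign and index bookkeeping: one must track the conventions of \eqref{eq:generalHam}, confirm $p_j\partial_{p_j}(\a^*p_2)=L_E(\a^*p_2)$ against the definition $E=p_j\partial_{p_j}$, and check that the curve remains inside the domain where $S$ generates $gr(m)$. None of these is a conceptual obstacle, but they are where sign errors would most easily appear.
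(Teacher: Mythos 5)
Your proposal is correct and follows exactly the paper's route: the paper's proof likewise evaluates \eqref{eq:genScurve} on the curve \eqref{eq:curve1} with $z=(x_1,p_1)$ and $f=p_2$, and then invokes $S(0,0,x)=0$, eq. \eqref{eq:Sp0x}, integration by parts, and the Hamilton equations — precisely the steps you spell out. Your sign bookkeeping ($p(\dot x)=-L_E(\a^*p_2)$ from the convention \eqref{eq:generalHam} and $\px(z_2(0))=0$ on the zero section) is consistent with the paper's conventions.
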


\begin{proof}
	The proof consists on evaluating \eqref{eq:genScurve} for the curve given by \eqref{eq:curve1} with $f=p_2$ and $z=z_1$, yielding
	$$ u \mapsto ((x_1,p_1), \phi_u^{\a^*p_2}(\a(x_1,p_1),0), \phi_u^{\a^*p_2}(x_1,p_1) ) \in gr(m).$$
	The final expression comes as straightforward consequences of: the fact that $S(0,0,x)=0$ by construction, integration by parts, eq. \eqref{eq:Sp0x}, and the definition \eqref{eq:hamflow} of the hamiltonian flows.
\end{proof}

\begin{remark}
	Formula \eqref{eq:Sint1dgen} can be useful to find quantizations of coordinate Poisson manifolds through Fourier Integral Operators, as described in \cite{CDW4}. In particular, when $\pi=\omega^{-1}$ is constant and symplectic, via Fourier transform we can arrive from $S_\pi$ (see Example \ref{ex:S0}) to the well-known formula for the integral kernel of Moyal product in terms of symplectic areas of triangles.
\end{remark}

\subsubsection*{Relation to local groupoid 2-cocycles}

In this subsection, we consider a local symplectic groupoid $G$ with $(P_G,\omega_G,M)=(T^*M,\omega_c,\zero)$ and $M$ a coordinate space. We denote by $\theta_c \in \Omega^1(T^*M)$ the Liouville 1-form $\theta_c \equiv p dx$, so that $\omega_c = d\theta_c$ with our conventions. 
We first show that there is a germ of a groupoid $2$-cocycle $C\equiv C_G$ canonically associated to $G$.

Since $\omega_c$ is multiplicative in $G$ (recall Section \ref{subsec:lsg}), we have that 
$$ \delta \omega_c := pr_2^*\omega_c - m^*\omega_c + pr_1^*\omega_c  =_{M^{(2)}} 0,$$
where $\delta: \Omega^k(G^{(l)}) \to \Omega^k(G^{(l+1)})$ denotes the simplicial differential associated to the local groupoid structure (we follow the conventions summarized in \cite{CDvan}). From the commutation between $\delta$ and de Rham differential $d$ on forms, $d(\delta \theta_c) =0$. By an argument analogous to that in the proof of Theorem \ref{thm:genexist}, it follows that that on a contractible neighbourhood of $M^{(2)}\hookrightarrow G^{(2)}$,
\begin{equation}\label{eq:2cocycle} \delta \theta_c  = d C, \ \ C: G^{(2)} \dtod{M^{(2)}} \R\end{equation}
and that the germ of $C$ is uniquely determined by $C|_{M^{(2)}}=0$. In homological terms, $C$ defines a \emph{local groupoid $2$-cochain}, which is \emph{normalized}. It also follows from the definitions that $C$ defines a $2$-cocycle $\delta C=0$.
	We say that $C\equiv C_G$ is the {\bf canonical $2$-cocycle germ} associated to $G$.

\begin{remark}
	A similar construction can be performed to associate to a general local symplectic groupoid $G$, not necessarily over a coordinate space $M$, a germ of a groupoid $2$-cocycle $C: G^{(2)}\dtod{M^{(2)}} \R$ depending on auxiliary data. The auxiliary data needed in the general case consists of a choice of a potential $\theta$ for $\omega_G$ near the identities, $d\theta =_M \omega_G$, and a contracting homotopy for a neighborhood of $M^{(2)}$ inside $G^{(2)}$. With these, one can fix $C$ by $\delta \theta =_{M^{(2)}} dC$ (where the contraction is used) and $C|_{M^{(2)}}=0$.
\end{remark}

We now relate this $2$-cocycle $C$ to the underlying coordinate generating function, $S$, given in Corollary \ref{prop:existS}. To that end, we recall that a generating function $S$ induces a map
\begin{equation}\label{eq:Jcocy} J:X \dtod{X_0} G^{(2)}, \ (p_1,p_2,x) \mapsto ((\partial_{p_1}S|_{(p_1,p_2,x)}, p_1), ((\partial_{p_2}S|_{(p_1,p_2,x)}, p_2)) \end{equation}
which defines a diffeomorphism onto a neighborhood of $M^{(2)}$ inside $G^{(2)}$.

\begin{lemma}\label{lem:CandS}With the definitions above,
	\[ J^*C = L_{\Eu} S - S\] where $\Eu = p_1 \partial_{p_1} + p_2 \partial_{p_2}$ is the Euler vector field on $X\to X_0$. 
\end{lemma}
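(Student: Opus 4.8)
The function $C$ is defined only implicitly, through $dC = \delta\theta_c$ together with the normalization $C|_{M^{(2)}}=0$, so I would not try to compute $J^*C$ directly. Instead the plan is to verify the claimed identity by showing that both sides have the same exterior derivative and that they agree along $X_0$; since a function near $X_0$ is pinned down by its differential together with one value (exactly the uniqueness mechanism already used in the proof of Theorem \ref{thm:genexist}), this suffices.

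First I would pull back the defining relation along $J$. Writing $\delta\theta_c = pr_1^*\theta_c + pr_2^*\theta_c - m^*\theta_c$ and using that $pr_1\circ J$, $pr_2\circ J$ produce the points $(\partial_{p_1}S,p_1)$, $(\partial_{p_2}S,p_2)$, while the generating condition \eqref{eq:condS} forces $m\circ J(p_1,p_2,x) = (x,\partial_x S)$, the Liouville form $\theta_c = p\,dx$ pulls back to explicit $1$-forms on $X$: namely $J^*pr_1^*\theta_c = p_{1j}\,d(\partial_{p_{1j}}S)$, $J^*pr_2^*\theta_c = p_{2j}\,d(\partial_{p_{2j}}S)$, and $J^*m^*\theta_c = (\partial_{x^j}S)\,dx^j$. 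Hence
$$ d(J^*C) = p_{1j}\,d(\partial_{p_{1j}}S) + p_{2j}\,d(\partial_{p_{2j}}S) - (\partial_{x^j}S)\,dx^j. $$

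Next I would differentiate the right-hand side $L_{\Eu}S - S = p_{1j}\partial_{p_{1j}}S + p_{2j}\partial_{p_{2j}}S - S$ in the coordinates $(p_1,p_2,x)$. Expanding $d\big(p_{1j}\,\partial_{p_{1j}}S + p_{2j}\,\partial_{p_{2j}}S\big)$ by Leibniz produces the terms $(\partial_{p_{1j}}S)\,dp_{1j}$ and $(\partial_{p_{2j}}S)\,dp_{2j}$, which cancel against the matching terms in $dS$, leaving precisely the expression above; thus $d(L_{\Eu}S - S) = d(J^*C)$. Finally I would match values on $X_0$: from \eqref{eq:Sp0x} one gets $\partial_{p_1}S|_{(0,0,x)} = x = \partial_{p_2}S|_{(0,0,x)}$, so $J(0,0,x) = (1_x,1_x) \in M^{(2)}$ and therefore $J^*C|_{X_0}=0$ by the normalization $C|_{M^{(2)}}=0$; on the other side $\Eu$ vanishes along $X_0$ (since $p_1=p_2=0$ there) and $S|_{X_0}=0$, so $(L_{\Eu}S-S)|_{X_0}=0$ as well. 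Equal differentials plus agreement on $X_0$ yield equal germs on a tubular neighbourhood.

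The two coordinate computations are routine; the point where the structure really enters — and the main thing to get right — is the pullback of $m^*\theta_c$, where \eqref{eq:condS} is exactly what identifies the base coordinate of $m(z_1,z_2)$ as $x$ and its fibre coordinate as $\partial_x S$. The only other care needed is the constant-matching step, which forces me to invoke \eqref{eq:Sp0x}, the normalization $C|_{M^{(2)}}=0$, and $S|_{X_0}=0$ simultaneously, and to note that the relevant neighbourhood is connected over each component of $X_0\simeq M$.
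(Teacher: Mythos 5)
Your proposal is correct and follows essentially the same route as the paper: pull back $\delta\theta_c = dC$ along $J$ to get the three identities $J^*pr_i^*\theta_c = p_{ij}\,d(\partial_{p_{ij}}S)$ and $J^*m^*\theta_c = (\partial_{x^j}S)\,dx^j$, observe that their combination equals $d(L_{\Eu}S - S)$, and fix the additive constant using $C|_{M^{(2)}}=0$ together with $S|_{X_0}=0$. The paper states this more tersely but the argument is identical.
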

\begin{proof}
	The Lemma follows directly by evaluating $J^*(\delta \theta_c)$ in \eqref{eq:2cocycle}, and using the identities
	$$ J^*pr_2^*(\theta_c) = p_{2j}d(\partial_{p_{2j}}S), \ J^*m^*(\theta_c)=\partial_{x^j}S dx^j, \  J^*pr_1^*(\theta_c) = p_{1j}d(\partial_{p_{1j}}S),$$
	which in turn follow directly from the definitions, and recalling that $S|_{X_0}=0$ to check the normalization condition.
\end{proof}

In Remark \ref{rmk:2cocynatffam} below, we provide an integral formula relating $C$ and $S$ in the case of "natural $1$-parameter families" (to be defined in Section \ref{subsub:smooth1par} below). 

\subsection{The canonical $G_\pi$ integrating coordinate Poisson manifolds}\label{subsec:canG}

We now begin our construction of a canonical local symplectic groupoid (germ) $G_\pi$ integrating a coordinate Poisson manifold $(M,\pi)$. The first ingredient is a realization map $\a\equiv \a_\pi$ introduced by Karasev \cite{Karasev}, which we now proceed to describe.

For $p\in M^*$, denote $\varphi_{\pi,p}:M \times \R \ _{M\times 0}\dto M, (x,t)\mapsto \varphi_{\pi,p}^t(x)$ the flow of the differential equation \eqref{eq:Ppeq} mentioned in the introduction. We consider the map $\a_\pi: M \times M^* \ _{M\times 0}\dto M, (x,p)\mapsto \a_\pi(x,p)$ defined implicitly by the analytic expression
\begin{equation}
\label{eq:alphapi}\int_0^1 du \ \varphi_{\pi,p}^u (\a_\pi(x,p)) = x \ \text{for all } p \ \text{close to $0$.}
\end{equation}
Since $\varphi^u_{\pi,0} = id_M, \forall u$, by the implicit function theorem, it follows that $\alpha$ is indeed well defined as a smooth function in a neighborhood of $M\times 0$ (see details in \cite{CD}).
From \cite{Karasev}, we recall that $\a_\pi$ defines a \emph{strict symplectic realization} of $(M,\pi)$ whose lagrangian section is $\zero:M \hookrightarrow T^*M, x \mapsto (x,0)$ (see also \cite{CD} for its connection to other realizations).

\begin{example}\label{ex:apisimpleexs}
	When $\pi=0$, then $\a_\pi(x,p)=x$. A bit more generally, when $\pi^{ij}(x)=\pi^{ij}$ is constant in $x$, then $\varphi_{\pi,p}^u(x_0)^i= x^i_0 + u\pi^{ij}p_j$ so that 
	$$ \a_\pi(x,p)^i= x^i - \frac{1}{2}\pi^{ij}p_j = (x + \frac{1}{2}\pi^\sharp p)^i.$$
\end{example}

\begin{example}\label{ex:apilin}
	Let $\gg$ be a Lie algebra and $M=\gg^*$ be endowed with the linear Poisson structure $\tilde \pi$ introduced in Example \ref{ex:cotangentH}. With these ingredients, then
	$$ \a_{-\tilde \pi} = \tilde \a\circ \Fou: T^*M = \gg^* \times \gg \dtod{\gg^*\times 0} \gg^* $$
	where $\Fou: T^*\gg^* \to T^*\gg, \ \Fou(x,p) = (p,x) \in T^*\gg$ denotes the standard 'flip' (see also Remark \ref{rmk:lagrels}) and the map $\tilde \a: T^*\gg \dtod{T^*_0\gg} \gg^*$ is the source map in the cotangent groupoid construction of Example \ref{ex:cotangentH} applied to the local Lie group $H_\gg$ defined by $\gg$ in Example \ref{ex:Hbch}. To show this, one needs to recall that (see \cite[Thm. 1.5.3]{DKbook}) the left-invariant Maurer-Cartan form $\theta_p^L(v):=D_pL_{p^{-1}}(v), \ p\in H_\gg, v\in \gg$ in $H_\gg$ admits the following integral formula for $p\sim 0$,
	$$ \theta^L_p(v) = \int_0^1 du \ e^{-u \ ad_p}v.$$
	(Also recall the similar formula for $\theta^R_p$ in Example \ref{ex:Hbch}.)
\end{example}

For later reference, we also record the following special properties of $\a_\pi$.
\begin{lemma}\label{lma:propapi} If $(x,p)\in Dom(\a_\pi)$,
	\begin{equation}\label{eq:propsalphaini} \a_{t\pi}(x,p)=\a_{\pi}(x,tp), \ \  p(\a_\pi(x,tp)) = p(x), \ \forall t\in [0,1].\end{equation}
	As a consequence, for any $p_0 \in M^*$ seen as a function on $M$, the corresponding hamiltonian flow (recall \eqref{eq:hamflow}) satisfies the following properties
	\begin{equation}\label{eq:rhamu} r\ham^{\a_\pi^*p_0}_u(x_0,0) = up_0, \ \ p_0(q\ham^{\a_\pi^*p_0}_u(x_0,0))=p_0(x_0), \ \forall u. \end{equation}
\end{lemma}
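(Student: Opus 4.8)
The plan is to establish the two identities in \eqref{eq:propsalphaini} first, directly from the defining integral equation \eqref{eq:alphapi}, and then to deduce the hamiltonian-flow properties \eqref{eq:rhamu} by exhibiting an explicit solution curve and invoking uniqueness of solutions to the hamiltonian ODE system \eqref{eq:generalHam}.

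For the first identity $\a_{t\pi}(x,p)=\a_\pi(x,tp)$, I would observe that the ODE \eqref{eq:Ppeq} is linear in the momentum parameter: replacing $\pi$ by $t\pi$ while keeping $p$ fixed gives $\dot x^i = t\pi^{ij}(x)p_j = \pi^{ij}(x)(tp)_j$, which is exactly \eqref{eq:Ppeq} for $\pi$ with parameter $tp$. Hence the flows coincide, $\varphi^u_{t\pi,p}=\varphi^u_{\pi,tp}$, and substituting into \eqref{eq:alphapi} shows that $\a_{t\pi}(x,p)$ and $\a_\pi(x,tp)$ solve the same implicit equation, so they agree by the uniqueness built into the implicit function theorem argument of \cite{CD}. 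For the second identity $p(\a_\pi(x,tp))=p(x)$, the key observation is that any linear function $q\in M^*$ is conserved along the flow of its own equation: $\frac{d}{du}q(\varphi^u_{\pi,q}(y)) = q_i\pi^{ij}q_j = 0$ by antisymmetry of $\pi$. Applying the linear functional $p$ to both sides of \eqref{eq:alphapi} written with momentum parameter $tp$, and using this conservation with $q=tp$, the integrand $p(\varphi^u_{\pi,tp}(\a_\pi(x,tp)))=p(\a_\pi(x,tp))$ becomes independent of $u$, which yields $p(\a_\pi(x,tp))=p(x)$.

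For the consequences \eqref{eq:rhamu}, write $H=\a_\pi^*p_0$, so $H(x,p)=p_0(\a_\pi(x,p))$. Differentiating the identity $(p_0)_i\a_\pi^i(x,tp_0)=(p_0)_ix^i$ (valid for all $t$ by the second point above) once in $x^j$ and once in $t$ produces the two differential relations
\[ (p_0)_i\,\partial_{x^j}\a_\pi^i(x,tp_0)=(p_0)_j, \qquad (p_0)_i(p_0)_k\,\partial_{p_k}\a_\pi^i(x,tp_0)=0. \]
I then verify that the curve $\gamma(u):=(\xi(u),up_0)$, where $\xi$ solves $\dot\xi^j=-\partial_{p_j}H|_{(\xi(u),up_0)}$ with $\xi(0)=x_0$, satisfies the full hamiltonian system \eqref{eq:generalHam}: the base equation holds by construction, while the momentum equation $\dot p_j=\partial_{x^j}H=(p_0)_i\partial_{x^j}\a_\pi^i(\xi,up_0)=(p_0)_j$ is exactly the derivative of $up_0$, by the first differential relation (with $t=u$, $x=\xi(u)$). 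Since $\gamma(0)=(x_0,0)$, uniqueness of hamiltonian flows identifies $\gamma(u)=\ham^{\a_\pi^*p_0}_u(x_0,0)$, so $r\ham^{\a_\pi^*p_0}_u(x_0,0)=up_0$. Finally, $\frac{d}{du}p_0(\xi(u))=(p_0)_j\dot\xi^j=-(p_0)_j(p_0)_i\partial_{p_j}\a_\pi^i(\xi,up_0)=0$ by the second differential relation, giving $p_0(q\ham^{\a_\pi^*p_0}_u(x_0,0))=p_0(x_0)$.

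The main obstacle is the momentum-growth claim $r\ham_u=up_0$: it is not enough to check consistency of the ansatz $\gamma(u)=(\xi(u),up_0)$, one must verify that $\gamma$ genuinely solves both hamiltonian equations and then appeal to uniqueness, for which the first differential relation, obtained by differentiating the conservation identity \eqref{eq:propsalphaini} in the base direction, is precisely what is needed. The remaining care concerns keeping the curve within the neighborhood of $\zero$ on which $\a_\pi$ is defined and tracking the sign convention of $\omega_c$ in \eqref{eq:generalHam}; both are routine.
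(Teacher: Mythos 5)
Your proposal is correct and follows essentially the same route as the paper: linearity of \eqref{eq:Ppeq} in $(\pi,p)$ for the first identity, skew-symmetry giving conservation of $p$ along its own flow for the second, and then the ansatz $p(u)=up_0$ verified against Hamilton's equations \eqref{eq:generalHam} by differentiating the conservation identity in the $x$- and $p$-directions. The only difference is presentational: you spell out the two differential relations and the uniqueness appeal that the paper leaves implicit ("by the identity we first proved"), which is a harmless and arguably welcome amount of extra detail.
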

\begin{proof}
	The first identity in \eqref{eq:propsalphaini} follows directly from $\varphi_{t\pi,p}=\varphi_{\pi,tp}$, since \eqref{eq:Ppeq} is linear in both $\pi$ and $p$. The second identity in \eqref{eq:propsalphaini} will be a direct consequence of the skewsymmetry of $\pi^{ij}$ in \eqref{eq:Ppeq}. Taking $t\in[0,1]$ fixed, $\a_{\pi}(x,tp)$ is defined by the equation
$$ x = \int_0^1 du \ \varphi_{\pi,tp}^u (\a_{\pi}(x,tp)) = \int_0^1 du \ \varphi_{\pi,p}^{tu} (\a_{\pi}(x,tp)) $$
where we have used the fact that \eqref{eq:Ppeq} is linear in $p$ in the second equality, so its rescaling is equivalent to time rescaling. On the other hand, 
$$ p(\varphi_{\pi,p}^s(y)) = p(y) \ \forall s,$$
which follows by taking $d/ds$, using \eqref{eq:Ppeq} and the fact $p_ip_j \pi^{ij}(y)=0$ due to skewsymmetry. Then,
$$ p(x) = \int_0^1 du \ p(\varphi_{\pi,p}^{tu} (\a_{\pi}(x,tp)))=\left( \int_0^1 du \right) p(\a_\pi(x,tp)), $$
finishing the second identity in \eqref{eq:propsalphaini}. The first identity in \eqref{eq:rhamu} follows by writing Hamilton's equation on $(T^*M,\omega_c)$ for $H=\a_\pi^*p_0$:
$$ \dot x^i = - p_0(\partial_{p_i}\a_\pi(x,p)), \ \dot p_i = p_0(\partial_{x^i}\a_\pi(x,p)) $$
By the identity we first proved, it follows that $p(u)=up_0$ solves the second equation above. Finally, for the second identity in \eqref{eq:rhamu}, we write $\ham^{\a_\pi^*p_0}_u(x_0,0) = (x(u),up_0)$ by the previous item and compute
$$\frac{d}{du} p_0(q\ham^{\a_\pi^*p_0}_u(x_0,0)) = -p_{0j}\partial_{p_j} [p_0 \circ \a_\pi] (x,up_0) = -\frac{d}{d\e}|_{\e=0} [p_0(\a_\pi(x, u p_0 +  \e p_0))] =0 $$
where we used \eqref{eq:propsalphaini} to conclude $p_0(\a_\pi(x, u p_0 +  \e p_0)) = p_0(x)$ is independent of $\e$. This finishes the proof of the Lemma.
\end{proof}

Now, we define the local symplectic groupoid which is our main object of study in this subsection.
\begin{definition}
	Let $(T^*M,\omega_c,\zero,\a_\pi)$ be the strict symplectic realization defined by eq. \eqref{eq:alphapi} above and denote by $\G$ the construction of eq. \eqref{eq:Gfdef}, detailed in Section \ref{subsec:streal}. The (germ of the) local symplectic groupoid $$G_\pi:=\G(T^*M,\omega_c,0^{T^*M},\a_\pi)$$ will be called the \emph{canonical local symplectic groupoid} associated to $(M,\pi)$.
\end{definition}

The canonical local symplectic groupoid structure $G_\pi$ then has $\zero$ as identity map and $\a_\pi$ as its source map. We now characterize the inversion map.

\begin{lemma}
	The inversion map $\i$ on $G_\pi$ is given by $\i(x,p) = (x,-p)$.
\end{lemma}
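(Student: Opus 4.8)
The plan is to verify the explicit criterion for inversion that was established just before the statement of Corollary \ref{prop:existS}, namely that $\i(x,p)=(x,-p)$ holds if and only if the underlying coordinate generating function $S$ for $G_\pi$ satisfies
\[ \partial_{p_1}S(p,-p,x)=\partial_{p_2}S(p,-p,x), \qquad \partial_x S(p,-p,x)=0, \]
equivalently that $(x,p)\mapsto S(p,-p,x)$ is locally constant near $\zero$. Since $S|_{X_0}=0$, establishing that this function is constant would force it to equal $0$, and the inversion formula follows. First I would record that, for the canonical groupoid, the source and target maps are recovered from $S$ by $\a_\pi(x,p)=\partial_{p_2}S(p,0,x)$ and $\b(x,p)=\partial_{p_1}S(0,p,x)$, and that the Karasev realization map $\a_\pi$ is the given data.

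A cleaner route, which I expect to be the main line of argument, is to avoid computing $S$ altogether and instead work directly at the level of the strict symplectic realization $(T^*M,\omega_c,\zero,\a_\pi)$ via the flow description of the multiplication from Section \ref{subsec:streal}. The inversion map $\i$ of $G=\G(T^*M,\omega_c,\zero,\a_\pi)$ is characterized groupoid-theoretically by $m(z,\i(z))=1_{\b(z)}$ together with $\a(\i(z))=\b(z)$ and $\b(\i(z))=\a(z)$. I would exploit the left/right invariance of the hamiltonian flows $\ham^{\a_\pi^*f}_u$ and $\ham^{\b^*f}_u$: writing an arrow $z=(x,p)$ near the units as the time-one flow $z=\ham^{\a_\pi^*f}_{u=1}(\a_\pi(z),0)$ for a suitable $f$, one multiplies by the reverse flow to return to a unit. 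The key computational inputs are the two special properties of $\a_\pi$ established in Lemma \ref{lma:propapi}, particularly $r\ham^{\a_\pi^*p_0}_u(x_0,0)=up_0$ and $p_0(q\ham^{\a_\pi^*p_0}_u(x_0,0))=p_0(x_0)$, which pin down both the $p$-component (linear in $u$, hence reversed to $-p$ at the appropriate endpoint) and the invariance of the relevant linear functions along the flow.

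Concretely, I would argue that the candidate map $I(x,p)=(x,-p)$ is an involution fixing the units, is compatible with the symplectic structure in the sense required (it should reverse $\omega_c$ appropriately, matching Remark \ref{rmk:Gopom} on the opposite groupoid), and satisfies $\a_\pi\circ I=\b$ together with $m(z,I(z))=1_{\b(z)}$ near the identities; by the uniqueness of the germ of $G$ determined by the strict realization data (the construction $\G$ of \cite[Chap. III, Thm. 1.2]{CDW}), this forces $\i=I$. Verifying $m(z,I(z))=1_{\b(z)}$ is where the flow-invariance identities of Lemma \ref{lma:propapi} do the real work: along the curve generated by $\a_\pi^*p_0$ the momentum grows linearly while $p_0\circ q$ stays constant, so running the flow forward by $u=1$ from a unit and then composing with the inverse group element brings the momentum back through $-p$ to $0$, landing on a unit.

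The main obstacle I anticipate is bookkeeping the source/target fibration and the precise form of the composable pair $(z,I(z))\in G^{(2)}$: one must check the matching condition $\a_\pi(z)=\b(I(z))$ and then evaluate $m$ using the left-invariant flow description \eqref{eq:curve1}, taking care that the skew-symmetry-driven identities in \eqref{eq:propsalphaini}–\eqref{eq:rhamu} are applied to the correct linear functions $p_0$. A secondary subtlety is ensuring the argument is germ-theoretic throughout (everything only near the units), so that the uniqueness clause of the $\G$-construction legitimately upgrades the verified algebraic identities to the conclusion $\i=I$.
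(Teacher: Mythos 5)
Your overall strategy (exhibit $I(x,p)=(x,-p)$ as a candidate, verify the inversion axioms near the units, and invoke uniqueness of the germ determined by the strict realization) is reasonable in principle, and it is a genuinely different route from the paper's: the paper instead transports the known inversion $\i_V(a)=-\phi^V_{u=1}(a)$ of the spray groupoid $G_V$ through the isomorphism $exp_V:G_V\to G_\pi$, after computing $exp_V$ explicitly as $(x,p)\mapsto(\gamma^p(x),p)$ with $\gamma^p(x)=\int_0^1\varphi^u_{\pi,p}(x)\,du$; the whole Lemma then reduces to the one-line integral identity $\int_0^1\varphi^u_{\pi,-p}(\varphi^1_{\pi,p}(x))\,du=\int_0^1\varphi^{1-u}_{\pi,p}(x)\,du$.

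However, your verification of $m(z,I(z))=1_{\b(z)}$ has a genuine gap, and it sits exactly where the substantive content of the Lemma lies. Writing $z=(x,p)=\ham^{\a_\pi^*p}_{u=1}(x_0',0)$ and using left-invariance, the element $w$ with $m(z,w)$ a unit is $w=\i(z)=\ham^{\a_\pi^*(-p)}_{u=1}(1_{\a_\pi(z)})$. The identities of Lemma \ref{lma:propapi} that you invoke then give $r(\i(z))=-p$ (the momentum is linear in $u$ along the flow) and control the \emph{single scalar} $p\bigl(q(\i(z))\bigr)=p(\a_\pi(z))$ — but they say nothing about the remaining directions of the base component. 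So your argument establishes the momentum half of the claim but leaves $q(\i(x,p))=x$ unproven; that equality amounts to the nontrivial identity $q\,\ham^{\a_\pi^*(-p)}_{u=1}(\a_\pi(x,p),0)=x$, which is precisely the reflection symmetry of the Karasev map that the paper's change of variables $s=1-u$ in the integral \eqref{eq:alphapi} delivers. A secondary issue is that you use $\b(x,p)=\a_\pi(x,-p)$ to check composability of $(z,I(z))$, but in the paper that formula for $\b_\pi$ is deduced \emph{from} the Lemma ($\b=\a\circ\i$), so you would need an independent computation of $\b$ from the realization data (e.g.\ via the symplectic-orthogonal foliation, or via $\b_V=q\,\phi^V_{u=1}$ in the spray model) to avoid circularity. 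To repair the proof you essentially need the explicit description of $exp_V$ (or an equivalent explicit handle on $\a_\pi^{-1}$ along its fibers), which is the ingredient your sketch omits.
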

\begin{proof}
	Our arguments will be based on the spray groupoid $G_V$ construction, recalled in Examples \ref{ex:sprayreal} and \ref{ex:spraygd}, applied to the flat Poisson spray for the coordinate Poisson manifold $(M,\pi)$ (see \cite{CD}), $$ V|_{(x,p)} = \pi^{ij}(x)p_j \partial_{x^i}.$$
	Notice that this vector field is equivalent to the system of ODEs for $t \mapsto (x(t),p(t))$ defined by eq. \eqref{eq:Ppeq} together with $\dot p_j =0$. In particular, as recalled from \cite{CMS1} in Example \ref{ex:spraygd}, we know there is an (exponential) isomorphism $exp_V:G_{V} \to G_\pi$ into the canonical $G_\pi$. It follows that, close to the units in $G_\pi$, the inversion is fully characterized by
	\begin{equation}\label{eq:invcheck}  \i \circ exp_V =_M  exp_V \circ \i_V ,\end{equation}
	where $\i_V$ is the inverse in the spray groupoid $G_V$, whose formula was recalled in Example \ref{ex:spraygd}. Let us now get a formula for $exp_V$. Since $exp_V$ is a local symplectic groupoid isomorphism, it is an isomorphism of the underlying strict symplectic realizations, and its germ thus uniquely characterized  by (see \cite{CDW}) sending units to units, defining a symplectomorphism near the units and by relating the source maps:
	$$ \a_\pi (exp_V (x,p)) = q(x,p) =x , \text{ for all $p$ small enough}.$$
	On the other hand, from the definition of $\a_\pi$ in eq. \eqref{eq:alphapi}, fixing $p$ small, we see that
	$$ \gamma^p(x): = \int_0^1 \ \varphi^u_{\pi,p}(x) \ du $$
	defines a local inverse to $x \mapsto \a_\pi(x,p)$. The map
	$$ \psi_V:(x,p) \mapsto (\gamma^p(x),p) \text{ for $(x,p)$ near $0^{T^*}$,} $$
	thus relates the source maps, $\a_{\pi}\circ \psi_V = q$, is the identity on units $\zero$ and it is straightforward to check that it also relates the symplectic structures, $\psi_V^*\omega_c = \omega_V$ (see \cite{CD}). Then, the germ of $\psi_V$ near $\zero$ must coincide with that of $exp_V$.
	
	Finally, we verify eq. \eqref{eq:invcheck} with $\i(x,p)=(x,-p)$ as in the statement of the Lemma. Considering $a=(x,p)$ with $p$ close enough to zero, we compute
	\begin{eqnarray*} exp_V \circ \i_V(x,p) &=& exp_V( - \phi^V_{u=1}(a)) \\ 
		&=& exp_V(\varphi^{1}_{\pi,p}(x), -p) \\
		&=& \left( \int_0^1 \ \varphi^u_{\pi,-p}(\varphi^{1}_{\pi,p}(x)) \ du, -p\right) \\
		&=& \left( \int_0^1 \ \varphi^{1-u}_{\pi,p}(x) \ du, -p\right) \\
		&\overset{s=1-u}{=}& \left( \int_0^1 \ \varphi^{s}_{\pi,p}(x) \ ds, -p\right) \\
		&=& (\gamma^P(x),-p) = \i\circ exp_V(x,p).
	\end{eqnarray*}
The Lemma is thus proven.
\end{proof}

As a consequence, the target map $\b_\pi$ for $G_\pi$ is given by
$$\b_\pi(x,p)=\a_\pi(x,-p).$$

\begin{example}\label{ex:Gpiconst}
	For $\pi=0$, we have that $G_{\pi=0}=(T^*M,+)$ the groupoid given by addition of covectors (Example \ref{ex:Gzeropois}). When $\pi^{ij}(x)=\pi^{ij}$ is constant in $x$, then $\a_\pi$ was given in Example \ref{ex:apisimpleexs} and the other structure maps of $G_\pi$ are then given by
$$ \b_\pi(x,p)= x - \frac{1}{2} \pi^{\sharp} p, m_\pi((x_1,p_1),(x_2,p_2))= (x_1+\frac{1}{2}\pi^\sharp(p_2), p_1+p_2)$$
where we denoted $\pi^\sharp(p)^i = \pi^{ji}p_j$. Observe that, in this case, $G_\pi$ is isomorphic to the action groupoid associated to the abelian group $(M^*,+)$ acting on $M$ through $\pi^\sharp$.
\end{example}

\begin{example}\label{ex:Gpilin}
 For a Lie algebra $\gg$, we consider the associated local Lie group $H_\gg$ of Example \ref{ex:Hbch} and the linear Poisson structure $\tilde \pi$ on $M=\gg^*$ defined in Example \ref{ex:cotangentH}, so that $T^*H_\gg$ integrates $(\gg^*,\tilde \pi)$. Using the computation of Example \ref{ex:apilin}, we get that $\Fou(x,p)=(p,x)$ induces an isomorphism
 $$ \Fou:G_{\pi} \simeq \overline{T^*H_\gg}, \text{ where } \pi = -\tilde \pi $$
 where the overline denotes the same local Lie groupoid but endowed with the opposite symplectic structure (see Remark \ref{rmk:Gopom}).  By Remark \ref{rmk:SforGop} and Example \ref{ex:Gpilin}, it follows that the $S$ given by the BCH series (see Example \ref{ex:Sbch}) is also a coordinate generating function for $G_\pi$, with $\pi = -\tilde \pi$ the underlying linear Poisson structure.
\end{example}

\begin{remark}\label{rmk:rightinvarODE} (Right-invariant ODEs) The solution of the ODE for a curve $t\mapsto g(t)\in \a_\pi^{-1}(y)$ given by
	$$ -\omega_c^\flat(TR_{g^{-1}}\frac{d}{dt}g) = (\b_\pi(g),p) , \ g(0)=(y,0) \text{ is given by } g(t)=\ham_{t}^{-p\b_\pi}(y,0),$$
	where $\phi^H_t$ is the hamiltonian flow in $(T^*M,\omega_c)$ (see Section \ref{sec:lsg}). This follows from the flow of $p\b_\pi$ being right invariant for $m_\pi$, $\omega_c$ being multiplicative (eq. \eqref{eq:omegamult}), $\b_\pi$ being an anti-Poisson map (see Section \ref{sec:lsg}) and by the first property in \eqref{eq:rhamu} (recall $\b_\pi(x,p)=\a_\pi(x,-p)$). This fact will be used in Section \ref{sec:psm} below.
\end{remark}

\subsection{The canonical generating function and the natural smooth family}\label{subsec:cangenfun}

From the previous subsection, we know that the canonical local symplectic groupoid $G_\pi$ admits a coordinate generating function, $S$, whose germ is uniquely fixed by $S(0,0,x)=0$. In this subsection, we observe that the special properties of the realization map $\a_\pi$ allow us to refine the general formulas obtained in \ref{subsub:formulasS} for $S$. Moreover, we observe that the formulas alone can be shown to define a function independently of any existence result.

\medskip

We thus start by writing down the formulas that define the function entering our main theorem. Recalling the notation \eqref{eq:hamflow} for hamiltonian flows in $P=(T^*M,\omega_c)$, given any map $\alpha: M \times M^* \ _{M\times 0}\dto M$ we consider the system
\begin{eqnarray}\label{eq:DarbS} 
\beta(x,p)&=&\a(x,-p) \nonumber \\
x &=& q\phi^{\b^*p_1 + \a^*p_2}_{u=1}(x_0,0) \nonumber \\
S(p_1,p_2,x) &=& (p_1+p_2)(x_0) - \int_0^1 du \  L_E (\b^*p_1+\a^*p_2)( \phi^{\b^*p_1 + \a^*p_2}_{u} (x_0,0) )
\end{eqnarray}
where $p_1,p_2 \in M^*$ are seen as linear functions on $M$, $x\in M$ and we recall the Euler vector field $E=p_j \partial_{p_j}$.

\begin{lemma}
	For any smooth map $\a$ as above, the system \eqref{eq:DarbS} defines a smooth function $$ S: M^* \times M^* \times M  \ _{(0 \times 0 \times M)}\dto \R$$ which satisfies $S(0,0,x)=0, \forall x$.
\end{lemma}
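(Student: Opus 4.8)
The plan is to read the three lines of \eqref{eq:DarbS} as a sequence of constructions: first fix the hamiltonian $H\equiv H_{p_1,p_2}:=\beta^*p_1+\alpha^*p_2\in C^\infty(T^*M)$, which depends smoothly on the parameters $(p_1,p_2)$; second, use the middle equation to solve \emph{implicitly} for $x_0$ as a smooth function of $(p_1,p_2,x)$; and third, substitute into the last line. The whole argument is driven by the observation that everything degenerates trivially at $p_1=p_2=0$: there $H\equiv 0$ as a function on $T^*M$ (since $p_1,p_2$ enter linearly), so its hamiltonian flow is stationary, $\phi^H_u(x_0,0)=(x_0,0)$ for all $u$.

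First I would record that the time-$1$ flow $\phi^H_{u=1}(x_0,0)$ is defined on an open neighborhood of $\{0\}\times\{0\}\times M$ in $M^*\times M^*\times M$: this follows from the openness of the domain of definition of a (parameter-dependent) flow, together with the fact that at $(p_1,p_2)=(0,0)$ the flow is stationary, hence defined for all $u\in[0,1]$, so a tube-lemma argument over the compact interval $[0,1]$ supplies such a neighborhood. On it set $F(p_1,p_2,x_0):=q\,\phi^H_{u=1}(x_0,0)$. I would then solve the middle equation $x=F(p_1,p_2,x_0)$ for $x_0$. Since $H$ vanishes identically at $p_1=p_2=0$, one has $F(0,0,x_0)=x_0$, so $F$ restricts to the identity on $N:=\{0\}\times\{0\}\times M$ and its partial differential $\partial_{x_0}F$ equals the identity there; consequently the differential of the map $(p_1,p_2,x_0)\mapsto(p_1,p_2,F(p_1,p_2,x_0))$ is an isomorphism at every point of $N$, and this map restricts to the identity of $N$. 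Applying the generalized inverse function theorem (Folklore Theorem \ref{thm:geninvfun}) with $N_1=N_2=N$ yields a smooth local inverse, i.e. a smooth germ $x_0=x_0(p_1,p_2,x)$ near $N$ satisfying $x_0(0,0,x)=x$.

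It remains to check that substituting this $x_0$ into the third line of \eqref{eq:DarbS} produces a smooth function. The term $(p_1+p_2)(x_0)$ is smooth, being linear in $(p_1,p_2)$ and smooth in $x_0$. For the integral, the integrand $u\mapsto L_E H(\phi^H_u(x_0,0))$ is a smooth function of $(u,p_1,p_2,x_0)$: indeed $L_E H=p_j\partial_{p_j}H$ is smooth on $T^*M$ and smooth in $(p_1,p_2)$, while $\phi^H_u(x_0,0)$ is smooth in all arguments. Differentiation under the integral sign then shows $\int_0^1 L_E H(\phi^H_u(x_0,0))\,du$ is smooth in $(p_1,p_2,x_0)$, and precomposition with the smooth $x_0(p_1,p_2,x)$ gives smoothness in $(p_1,p_2,x)$. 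Finally, $S(0,0,x)=0$ is immediate: at $p_1=p_2=0$ we have $x_0=x$, the linear term vanishes, and $H\equiv 0$ forces $L_E H\equiv 0$, so the integral vanishes as well.

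The only genuine subtlety—and the step I would treat most carefully—is the implicit solution for $x_0$: one must guarantee both that the time-$1$ flow exists and that the inversion holds uniformly enough to produce an honest neighborhood of $\{0\}\times\{0\}\times M$ rather than merely a pointwise-in-$x$ statement. This is exactly what Folklore Theorem \ref{thm:geninvfun} provides, with $N$ as the submanifold on which $F$ is already a diffeomorphism; the remaining assertions are routine smoothness bookkeeping.
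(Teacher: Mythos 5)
Your proof is correct and follows the same route as the paper: the paper's argument is exactly to note that the flow is the identity when $p_1=p_2=0$, solve the middle equation for $x_0(p_1,p_2,x)$ near $0\times 0\times M$ by the implicit function theorem, and read off $S(0,0,x)=0$ from the last line. Your write-up merely supplies the routine details (existence of the time-$1$ flow near the zero parameters, uniformity over the submanifold via the generalized inverse function theorem, smoothness of the parameter-dependent integral) that the paper leaves implicit.
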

\begin{proof}
	Since for $p_1=0=p_2$ the flow is the identity, by the implicit function theorem we can solve for $x_0\equiv x_0(p_1,p_2,x)$ near $0\times 0 \times M$ in the second equation above. This leaves the third eq. as an ordinary definition of a function $S$. From the last equation in the system, it is clear that $S(0,0,x) = 0$.
\end{proof}

We summarize the results obtained thus far in the following theorem, which is the main result of this subsection.

\begin{theorem}\label{thm:main1}
	Let $\pi$ be a Poisson structure on a coordinate space $M$ and let $\alpha_\pi$ be the induced realization map defined by \eqref{eq:alphapi}. Then, the function $S\equiv S_\pi$ defined by the system \eqref{eq:DarbS} with $\a=\a_\pi$ is a coordinate generating function for the canonical local symplectic groupoid $G_\pi=\G(T^*M,\omega_c,0^{T^*M},\alpha_\pi)$. Moreover, any other coordinate generating function for $G_\pi$ satisfying $S(0,0,x)=0\forall x$ must have the same germ as $S_\pi$ near $p_1=p_2=0$.
\end{theorem}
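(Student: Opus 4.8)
The plan is to reduce the theorem to its second sentence plus one explicit integration. The uniqueness claim is immediate from Corollary \ref{prop:existS}: since $G_\pi$ has $(P_{G_\pi},\omega_{G_\pi},1)=(T^*M,\omega_c,\zero)$ by construction, that corollary already produces a coordinate generating function for $G_\pi$ whose germ around $X_0$ is pinned down uniquely by the normalization $S|_{X_0}=0$. Hence it suffices to prove the first sentence, i.e. that the explicit $S\equiv S_\pi$ of \eqref{eq:DarbS} (with $\a=\a_\pi$) is a coordinate generating function satisfying $S_\pi(0,0,x)=0$; it will then coincide as a germ with the abstract one, giving both claims at once. Note that $S_\pi(0,0,x)=0$ is already part of the Lemma preceding the theorem.

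Rather than checking the generating condition \eqref{eq:condS} directly, I would identify $S_\pi$ with the normalized generating function $\hat S$ furnished by Corollary \ref{prop:existS}. To this end I evaluate the curve-integral identity \eqref{eq:genScurve} along the distinguished family \eqref{eq:curve2} with $f_1=p_1$, $f_2=p_2$ and base point $x_0$, namely $\gamma(u)=(\ham^{\b^*p_1}_u(x_0,0),\ham^{\a^*p_2}_u(x_0,0),\ham^{\b^*p_1+\a^*p_2}_u(x_0,0))$. By construction this curve lies in $gr(m)$, it passes through $(1_{x_0},1_{x_0},1_{x_0})$ at $u=0$, and (being close to the units for $p_1,p_2$ small) it equals $I_{\hat S}$ applied to its first-leg momentum, second-leg momentum and third-leg position. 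Thus \eqref{eq:genScurve} applies and expresses $\hat S$ at the $u=1$ endpoint as $\hat S(0,0,x_0)=0$ plus the displayed integral of $\dot p_1(u)(x_1(u))+\dot p_2(u)(x_2(u))+p_3(u)(\dot x_3(u))$.

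The heart of the argument is to collapse this integral to the right-hand side of \eqref{eq:DarbS} using Lemma \ref{lma:propapi}. For the third (product) leg, Hamilton's equations \eqref{eq:generalHam} for $H=\b^*p_1+\a^*p_2$ give $p_{3j}\dot x_3^j=-p_{3j}\partial_{p_j}H=-L_EH$ evaluated along the flow, which is exactly the integrand in the last line of \eqref{eq:DarbS}. For the second leg I use \eqref{eq:rhamu}: along the $\a^*p_2$-flow one has $r\ham^{\a^*p_2}_u(x_0,0)=up_2$ and $p_2(q\ham^{\a^*p_2}_u(x_0,0))=p_2(x_0)$, so $\dot p_2(u)=p_2$ is constant and $\int_0^1\dot p_2(u)(x_2(u))\,du=p_2(x_0)$. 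For the first leg I transport these facts to the target flow by writing $\b_\pi=\a_\pi\circ\i$ with $\i(x,p)=(x,-p)$ the inversion computed above: since $\i$ is an anti-symplectomorphism, $\ham^{\b^*p_1}_u=\i\circ\ham^{\a^*p_1}_{-u}\circ\i$, whence $r\ham^{\b^*p_1}_u(x_0,0)=up_1$ and $p_1(x_1(u))=p_1(x_0)$ and therefore $\int_0^1\dot p_1(u)(x_1(u))\,du=p_1(x_0)$. Summing the three contributions yields precisely $(p_1+p_2)(x_0)-\int_0^1 L_E(\b^*p_1+\a^*p_2)(\ham^{\b^*p_1+\a^*p_2}_u(x_0,0))\,du$ together with the constraint $x=q\ham^{\b^*p_1+\a^*p_2}_1(x_0,0)$, i.e. the system \eqref{eq:DarbS}. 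Hence $\hat S=S_\pi$ near $X_0$, and $S_\pi$ is a coordinate generating function.

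The step I expect to be most delicate is this last collapse: upgrading the generic two-parameter formula \eqref{eq:S2parint} to the clean one-parameter expression \eqref{eq:DarbS}, which requires carrying the homogeneity and skew-symmetry consequences of Lemma \ref{lma:propapi} from the source flow over to the target flow through the anti-symplectic inversion, and confirming that $x=q\ham^{\b^*p_1+\a^*p_2}_1(x_0,0)$ is solvable for $x_0$ near $X_0$ (immediate, as the flow is the identity at $p_1=p_2=0$). The remaining manipulations — integration by parts and the boundary identity \eqref{eq:Sp0x} — are routine.
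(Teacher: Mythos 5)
Your proposal is correct and follows essentially the same route as the paper: uniqueness via Corollary \ref{prop:existS}, then evaluating \eqref{eq:genScurve} on the diagonal curve \eqref{eq:curve2} with $f_1=p_1$, $f_2=p_2$ and collapsing the integral via \eqref{eq:rhamu} and Hamilton's equations. You merely spell out the step the paper calls "straightforward" (including the transport of \eqref{eq:rhamu} to the $\b$-flow through the anti-symplectic inversion), and that expansion is accurate.
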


\begin{proof}
	By Corollary \ref{prop:existS}, we know that $G_\pi$ admits a coordinate generating function $S$ (with embedding $\nu_c$) and that the germ of it is uniquely determined by $S(0,0,x)=0\forall x$. Then, we only need to show that this function $S$ solves the system \eqref{eq:DarbS}, with $\a=\a_\pi$, near $p_1,p_2=0$. To that end, fixing $x_0\in M$ and for $p_1,p_2$ close enough to zero, we consider the formula \eqref{eq:genScurve} evaluated at the curve $u \mapsto \gamma(u)\in gr(m)$ defined in \eqref{eq:curve2} out of $x=x_0 \in M$ and $f_1=p_1$, $f_2=p_2$. It is straightforward to arrive from this expression to the system \eqref{eq:DarbS} using properties \eqref{eq:rhamu} and the definition of the involved hamiltonian flows, thus finishing the proof.
\end{proof}

The function $S_\pi$ defined in Theorem \ref{thm:main1} will be called the {\bf canonical generating function} integrating $(M,\pi)$. We also state that $S_\pi$ satisfies the general formulas obtained in \ref{subsub:formulasS}, as well as the SGA equation.
\begin{corollary}
	The germ of the canonical generating function $S_\pi$ satisfies \eqref{eq:S2parint}, \eqref{eq:Sp0x} and \eqref{eq:Sint1dgen}. Also, the function $S_\pi$ is a solution to the SGA equation \eqref{eq:SGA}. 
\end{corollary}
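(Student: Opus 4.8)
The plan is to recognize that this corollary is, to a large extent, a bookkeeping consequence of Theorem \ref{thm:main1} combined with the derivations already carried out in Section \ref{subsub:formulasS}. The single substantive input is that $S_\pi$, defined by the system \eqref{eq:DarbS}, is a genuine coordinate generating function for the genuine local symplectic groupoid $G_\pi$, and that it satisfies the normalization $S_\pi(0,0,x)=0$ for all $x$; both facts are exactly what Theorem \ref{thm:main1} (and the Lemma preceding it) supply. All three integral identities \eqref{eq:S2parint}, \eqref{eq:Sp0x} and \eqref{eq:Sint1dgen} were established in Section \ref{subsub:formulasS} under precisely the hypothesis ``$S$ is a coordinate generating function for a local symplectic groupoid $G$ with $(P,\omega,M)=(T^*M,\omega_c,0^{T^*M})$, normalized by $S(0,0,x)=0$''. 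Since $S_\pi$ and $G_\pi$ meet these hypotheses, the identities transfer verbatim with $S=S_\pi$, $G=G_\pi$, the structure maps being $\a=\a_\pi$ and $\b=\b_\pi=\a_\pi(\,\cdot\,,-\,\cdot\,)$.

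Concretely, I would dispatch the three integral identities in turn. For \eqref{eq:S2parint} I would invoke the general curve formula \eqref{eq:genScurve}, applied to the two-parameter family \eqref{eq:2parfam} with $f_1=p_1$, $f_2=p_2$ and base point $x=x_0$, using $\{\b^*p_1,\a^*p_2\}_c=0$; this is the very computation that produced \eqref{eq:S2parint}, and it is valid for any coordinate generating function, hence for $S_\pi$. For \eqref{eq:Sp0x} I would specialize \eqref{eq:S2parint} by setting $p_2=0$ and then $p_1=0$, invoking $S_\pi(0,0,x_0)=0$. For \eqref{eq:Sint1dgen} I would simply note that it is the statement of the Proposition of Section \ref{subsub:formulasS} applied to the strict symplectic realization $(T^*M,\omega_c,0^{T^*},\a_\pi)$ of \eqref{eq:alphapi}, whose induced local symplectic groupoid $\G(T^*M,\omega_c,0^{T^*},\a_\pi)$ is by definition $G_\pi$, with $S=S_\pi$ the associated coordinate generating function.

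For the SGA equation I would use the equivalence recalled in Section \ref{subsub:sga}: for connected $M$, associativity of the multiplication $m$ of the local symplectic groupoid whose coordinate generating function is $S$ is equivalent to the SGA equation \eqref{eq:SGA} for $S$. Since $G_\pi=\G(T^*M,\omega_c,0^{T^*M},\a_\pi)$ is a genuine local symplectic groupoid produced from a strict symplectic realization via \cite[Chap. III, Thm. 1.2]{CDW}, its multiplication is associative near the units; hence $S_\pi$ solves \eqref{eq:SGA}. When $M$ is a possibly disconnected coordinate space, I would run this argument on each connected component and observe that \eqref{eq:SGA} is a pointwise-in-$x$ identity, so it then holds on all of $M$.

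The only point requiring genuine care, and thus the closest thing to an obstacle, is the matching of germ neighborhoods: each identity of Section \ref{subsub:formulasS} is asserted near a slightly different reference set (variously $M^{(3)}$, $X_0$, or $0^{T^*}$), and the associativity-to-SGA translation holds near $0\times 0\times 0\times M$. I would resolve this by intersecting the finitely many neighborhoods involved to obtain a single open neighborhood of the relevant units on which all the stated identities hold simultaneously. No analytic estimates beyond this intersection are needed, so I do not expect any deeper difficulty.
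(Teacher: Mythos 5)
Your proposal is correct and follows essentially the same route the paper intends: the corollary is stated without a separate proof precisely because it is the immediate transfer of the general identities of Section \ref{subsub:formulasS} (valid for any normalized coordinate generating function) to the pair $(G_\pi, S_\pi)$ supplied by Theorem \ref{thm:main1}, together with the associativity-to-SGA equivalence of Section \ref{subsub:sga}. Your extra care about shrinking to a common germ neighborhood and about disconnected $M$ is sound but not a substantive departure.
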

Notice that, in particular, for any $\pi$ on the coordinate space $M$, we can always find a smooth solution of the SGA equation integrating $(M,\pi)$ (recall eq. \eqref{eq:pifromS}).

\begin{remark}\label{rmk:simplifSpi}
	Because of the special properties \eqref{eq:rhamu} of $\a_\pi$, the formulas cited in the above Corollary have the following simplifications:
	$$ \px (\phi^{\a^*p_0}_1(x_0,0))= p_0(x_0)= \px (\phi^{\b^*p_0}_1(x_0,0)), \ L_E(\a^*p_0)(\phi_u^{\a^*p_0}(x_0,0)) = 0 .$$
\end{remark}

\begin{example}\label{ex:Spiconstandlin}
	For $\pi$ being constant (see Example \ref{ex:Gpiconst}) or $\pi$ being linear (see Example \ref{ex:Gpilin}) the germs of the associated canonical generating functions $S_\pi$ coincide, by the uniqueness property in Corollary \ref{prop:existS}, with the ones given in Examples \ref{ex:S0} and \ref{ex:Sbch}, respectively.
\end{example}

\subsubsection*{Induced smooth $1$-parameter families}\label{subsub:smooth1par}

We end this subsection showing how the above constructions extend to smooth 1-parameter families. Before that, we introduce the nomenclature to be used. By a \emph{smooth family of functions} $\hat f\equiv (f_t: X \dtod{Z} Y)_{t\in [0,1]}$ we mean a smooth map defined as
$$ \hat f: X \times [0,1] \dtod{Z \times [0,1]} Y. $$
By a \emph{smooth family of local groupoids} $\hat G\equiv (G_t)_{t\in [0,1]}$ we mean a local groupoid $\hat G$ with units space $ M \times [0,1]$ such that the natural projection onto $[0,1]$, endowed with the unit groupoid structure (recall Example \ref{ex:unitg}), defines a local groupoid morphism and all the structure maps are smooth families. We say that $\hat G$ is \emph{symplectic} when it comes endowed with a multiplicative $2$-form $\hat \omega$ whose restriction $\omega_{G_t}$ to each $t$-fiber is symplectic. We will focus on the case in which the family of symplectic structures $\omega_{G_t}$ is constant in $t$ and in which the units are given by a constant map $1:M \hookrightarrow P$.

Observe that a smooth family of local symplectic groupoids $\hat G$ induces a smooth family of Poisson structures $\hat \pi \equiv (\pi_t)_{t\in [0,1]}$ on $M$ so that the source map of $G_t$ is a Poisson map for each $t$. In this case, we say that \emph{the smooth family $\hat G$ integrates the smooth family $(M,\hat \pi)$}.

\begin{corollary}\label{cor:smfam}
	The constructions $\pi \mapsto G_\pi$ and $\pi \mapsto S_\pi$ defined above, when applied to $t\pi$ for a given $\pi$ and $t\in [0,1]$, define smooth $1$-parameter families
	$$ \hat G_\pi: = (G_{t\pi})_{t\in [0,1]}, \ \ \hat S_{\pi}:=(S_{t\pi})_{t\in [0,1]}$$
	 of local symplectic groupoids integrating $(M,t\pi)_{t\in [0,1]}$ and of corresponding coordinate generating functions, respectively. Moreover, the following identities hold:
	\begin{eqnarray} S_{t\pi} (\lambda p_1, \lambda p_2, x) = \lambda S_{\lambda t \pi}(p_1,p_2,x) &,& \  S_0(p_1,p_2,x) =  (p_1 + p_2)(x) \nonumber \\
	  S_{t\pi}(p_0,p_0,x) &=& 2  p_0 (x).  \label{eq:Spinaturprop}
	 \end{eqnarray}
\end{corollary}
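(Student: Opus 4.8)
The plan is to dispatch the smoothness of the two families first and then prove the three identities of \eqref{eq:Spinaturprop} separately, leaving the diagonal one for last since it carries the real difficulty. For smoothness I would start from $\a_{t\pi}(x,p)=\a_\pi(x,tp)$ (Lemma \ref{lma:propapi}): as $\a_\pi$ is smooth on a star-shaped neighborhood of $M\times 0$, the map $(x,p,t)\mapsto\a_\pi(x,tp)$ is smooth near $M\times 0\times[0,1]$, so $\hat\a=(\a_{t\pi})_t$ is a smooth family of realization maps with $t$-independent form $\omega_c$ and constant units $\zero$. The $\G$-construction of Section \ref{subsec:streal} builds the structure maps through hamiltonian flows of $\a$-basic functions, which depend smoothly on $t$; this produces the smooth family $\hat G_\pi=(G_{t\pi})_t$, each $G_{t\pi}$ integrating $(M,t\pi)$ by Karasev's result. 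For $\hat S_\pi$ I would feed $\a=\a_{t\pi}$ into \eqref{eq:DarbS}: the hamiltonian $\b^*p_1+\a^*p_2$ and its flow depend smoothly on $(p_1,p_2,t)$, the equation $x=q\phi^{\b^*p_1+\a^*p_2}_{1}(x_0,0)$ is solved for $x_0$ (uniformly in $t\in[0,1]$) by the implicit function theorem with parameters, and the integral then defines $S_{t\pi}(p_1,p_2,x)$ smoothly in all arguments.

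The identity $S_0=(p_1+p_2)(x)$ is a direct evaluation of \eqref{eq:DarbS} at $\pi=0$. Here $\a_0(x,p)=x$ (Example \ref{ex:apisimpleexs}), so $\b_0^*p_1+\a_0^*p_2=(p_1+p_2)(x)$ depends on $x$ alone; its hamiltonian flow fixes the base point (by the sign convention in the footnote one gets $x(u)=x_0$, $p(u)=u(p_1+p_2)$), so the second equation of \eqref{eq:DarbS} forces $x_0=x$, while $L_E$ of a $p$-independent function vanishes and kills the correction integral, leaving $(p_1+p_2)(x)$.

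For the homogeneity identity I would exploit the covector rescaling $\homp_\lambda(x,p)=(x,\lambda p)$. The point is $\a_{t\pi}(x,\lambda p)=\a_{\lambda t\pi}(x,p)$ (again Lemma \ref{lma:propapi}), which upgrades to $\homp_\lambda^*(\b_{t\pi}^*p_1+\a_{t\pi}^*p_2)=\b_{\lambda t\pi}^*p_1+\a_{\lambda t\pi}^*p_2=:\tilde H$. Together with $\homp_\lambda^*\omega_c=\lambda\omega_c$ this yields $\phi^{\tilde H}_u=\homp_\lambda^{-1}\circ\phi^{H}_{\lambda u}\circ\homp_\lambda$ for $H=\b_{t\pi}^*p_1+\a_{t\pi}^*p_2$, while $\homp_\lambda$ fixes the zero section. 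I would then observe that the second equation of \eqref{eq:DarbS} for $S_{t\pi}(\lambda p_1,\lambda p_2,\cdot)$ (whose hamiltonian is $\lambda H$, giving the time rescaling $\phi^{\lambda H}_u=\phi^H_{\lambda u}$) and for $S_{\lambda t\pi}(p_1,p_2,\cdot)$ select the same $x_0$, since $q$ is unaffected by $\homp_\lambda$ and by $u\mapsto\lambda u$; finally I would match the two integrals after the substitution $s=\lambda u$, using $L_E\circ\homp_\lambda^*=\homp_\lambda^*\circ L_E$.

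The main obstacle is the diagonal identity $S_{t\pi}(p_0,p_0,x)=2p_0(x)$, where the correction integral in \eqref{eq:DarbS} must be shown to vanish. I would read $S_{t\pi}(p_0,p_0,x)$ off the curve $\g_{p_0,p_0,x_0}$ of \eqref{eq:curve2}, whose flows $\phi^{\a^*p_0}_u$ and $\phi^{\b^*p_0}_u$ commute. The key claim is that the momentum of $z_3(u)=\phi^{\b^*p_0+\a^*p_0}_u(x_0,0)$ stays proportional to $p_0$, in fact equal to $2up_0$: each of the two commuting flows, started at a momentum proportional to $p_0$, keeps it proportional to $p_0$ with $\dot p=p_0$ — this is exactly the computation behind \eqref{eq:rhamu} in Lemma \ref{lma:propapi}, now run from a nonzero initial momentum. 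Once the momentum along $z_3$ is known to be a multiple of $p_0$, the simplification of Remark \ref{rmk:simplifSpi} (the vanishing of $p_0\bigl(L_E\a_\pi(x',sp_0)\bigr)$, a consequence of $p_0(\a_\pi(x',sp_0))=p_0(x')$ in \eqref{eq:propsalphaini}) forces $L_E(\b^*p_0+\a^*p_0)$ to vanish identically along $z_3$, so the integral is zero and $S_{t\pi}(p_0,p_0,x)=2p_0(x_0)$; the same momentum information makes $u\mapsto p_0(qz_3(u))$ constant, giving $p_0(x_0)=p_0(x)$ and hence $2p_0(x)$. Establishing that the combined momentum remains exactly $2up_0$ is the crux, and I expect that is where the care is needed.
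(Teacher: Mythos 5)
Your proposal is correct and follows essentially the same route as the paper: the homogeneity identity via the conjugation $\phi^{\lambda H}_u\circ\homp_\lambda=\homp_\lambda\circ\phi^{\homp_\lambda^*H}_u$ and $L_E\circ\homp_\lambda^*=\homp_\lambda^*\circ L_E$ coming from Lemma \ref{lma:propapi}, the $\pi=0$ case by direct evaluation, and the diagonal identity from $r\,z_3(u)=2up_0$ together with the vanishing of $L_E(\b^*p_0+\a^*p_0)$ along the flow. You are in fact slightly more explicit than the paper on the smoothness of the families and on the final step $p_0(x_0)=p_0(x)$, which the paper leaves implicit in its reference to the proof of \eqref{eq:rhamu}.
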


\begin{proof}
	From the first property in \eqref{eq:propsalphaini} and the definition of hamiltonian flows \eqref{eq:hamflow}, it follows that
	$$ \phi^{\lambda \b_\pi^*p_1 + \lambda \a_\pi^*p_2}_{u} \circ \homp_\lambda = \homp_\lambda \circ  \phi^{\b_{\lambda \pi}^*p_1 + \a_{\lambda\pi}^*p_2}_{u}$$
	recalling the rescaling action $\homp_\lambda(x,p)=(x,\lambda p)$. Also, by direct computation using \eqref{eq:propsalphaini},
	$$ [L_E( \b_\pi^*p_1 +  \a_\pi^*p_2)]\circ \homp_\lambda = L_E(\b_{\lambda\pi}^*p_1 + \a_{\lambda\pi}^*p_2). $$
	The first identity in \eqref{eq:Spinaturprop} thus follows. The second identity follows from noticing that for $\pi=0$, the definition of $\a_\pi$ in \eqref{eq:alphapi} reduces to
	$$ \a_0 (x,p) = x = \b_0(x,p) \text{ independent of $p$},$$
	so that $L_E(\b^*p_1+\a^*p_2) =0$.
	Finally, the last identity follows from
	$$ r \phi^{\b^*p_0 + \a^*p_0}_{u} (x_0,0) = 2up_0, \ \   L_E (\b^*p_0+\a^*p_0)( \phi^{\b^*p_0 + \a^*p_0}_{u} (x_0,0)) = 0.$$
	These, in turn, can be proven exactly following the proofs of \eqref{eq:rhamu} and recalling that $\a_\pi(x,p)=\b_\pi(x,-p)$ for $G_\pi$.
\end{proof}

Following the nomenclature used in \cite{CDF} for formal families (see also Section \ref{subsec:fromS}), we introduce the following:
\begin{definition}\label{def:Snatural}
 We say that a smooth family $\hat S \equiv (S_t)_{t\in [0,1]}$ of generating functions satisfying \eqref{eq:Sp0x} and identities \eqref{eq:Spinaturprop} defines a \emph{natural family of generating functions}.
\end{definition}
Notice that, for natural families of generating functions, $S_t$ is homogeneous of degree $1$ with respect to the rescaling action $(p_1,p_2,x,t) \mapsto (\lambda p_1, \lambda p_2, x, \lambda^{-1} t) $ on $X \times \R$.

The families $\hat G_\pi$ and $\hat S_\pi$ defined in Corollary \ref{cor:smfam} will be referred to as \emph{the natural smooth family of local symplectic groupoids, and of generating functions, respectively}, both integrating $(M,t \pi)_{t\in [0,1]}$. Notice that, in this family $S_{t\pi}$, the infinitesimal direction of deformation of $S_0$ is given by $\pi$.

\begin{example}\label{ex:natStlin}
		Consider $M=\gg^*$ endowed with the linear Poisson structure $\pi=-\tilde \pi$ defined by a Lie algebra structure on $\gg$, as in Example \ref{ex:Gpilin}. Using properties \eqref{eq:Spinaturprop} and Example \ref{ex:Sbch}, the natural smooth natural family $\hat G_\pi$ integrating $(M,t\pi)$ has
	$$t \mapsto  S_t(p_1,p_2,x) = \frac{1}{t}BCH(tp_1,t p_2)( x)$$
	as the natural smooth family of coordinate generating functions.
\end{example}

\begin{remark}\label{rmk:formulatseppi}
	For general $\pi$, as a consequence of eq. \eqref{eq:Sint1dgen}, and the simplifications of Remark \ref{rmk:simplifSpi} coming from the special properties of $\a_\pi$, we obtain that
	$$ S_{t\pi}(p_1,p_2,q\phi_t^{p_2\a_{\pi}}(x_1,tp_1)) = p_1(x_1) + p_2(\a_{t\pi}(x_1,p_1)) - \frac{1}{t}\int_0^t du \ [L_E(p_2\a_\pi)](\phi^{p_2\a_\pi}_u(x_1,tp_1))$$
\end{remark}

\begin{remark}\label{rmk:2cocynatffam}
	We now obtain a formula relating the canonical $2$-cocycle (see before Lemma \ref{lem:CandS}) and a natural family of generating functions. Let $(S_t)_{t\in [0,1]}$ be a natural family of generating functions, in the sense of Definition \ref{def:Snatural}, with underlying family of local symplectic groupoids $(G_t)_{t\in [0,1]}$ having constant symplectic structure $\omega_{G_t} = \omega_c\ \forall t$. On the one hand, we have the associated smooth family of canonical $2$-cocycles $C_t\equiv C_{G_t}$ defined by eq. \eqref{eq:2cocycle}. On the other, we have a smooth $t$-family of maps $J_t:X \dtod{X_0} G_t^{(2)}$ induced by $S_t$ via eq. \eqref{eq:Jcocy}. 
	It follows directly from the first of the naturality identities \eqref{eq:Spinaturprop} that 
	$$ L_\Eu S_t = S_t + t \partial_t S_t,$$
	so that, from Lemma \ref{lem:CandS}, we obtain the formulas
	\[J_t^* C_t = t \partial_t S_t ; \  \ S_t = (p_1+p_2)(x) + \int_0^t \frac{1}{s} J_s^* C_s \ ds.\]
\end{remark}

	

	




\section{The induced formal families and Kontsevich's tree-level formula}\label{sec:formal}

In this Section, we consider formal families of local symplectic groupoids and make the connection to the tree-level part of Kontsevich's quantization formula. To explain the context of these results, we go back to the key relation\footnote{For a formal star product $\star_\h$, the right hand side of \eqref{eq:starSP} can be understood as a formal power series on $\h,\h^{-1},p_1$ and $p_2$.} \eqref{eq:starSP} and consider that the left hand side is given by Kontsevich's star product \cite{Kontquant} associated to a coordinate Poisson manifold $(M,\pi)$. This $\star_\h$ is a formal power series in $\h$ involving Kontsevich graphs (see Appendix \ref{asub:Ktrees}) and the semiclassical contribution $S_P\equiv \bar{S}^K$ was singled out and studied by Cattaneo-Dherin-Felder in \cite{CDF} (see Example \ref{ex:bSK} below). From general considerations (see \cite[\S 7]{CDF}) it is expected that $S_P$ satisfies the SGA equation as a consequence of $\star_\h$ being associative. In \cite{CDF}, $\bar{S}^K$ is indeed shown to define a formal family of solutions of the SGA equation along $\e\pi$ (with $\e$ a formal parameter) as a consequence of underlying relations for the defining Kontsevich weights. The main result of this section, Theorem \ref{thm:main2SK} below, states that the formal Taylor expansion of the natural smooth family $\hat S_\pi$ (see subsection \ref{subsub:smooth1par}) around $t=0$ yields the Kontsevich-tree formal family $\bar{S}^K$ and, thus, that the SGA equation for $\bar{S}^K$ can be seen to hold as a consequence of non-formal Lie-theoretic constructions for $\hat{S}_\pi$. In subsection \ref{subsec:graphexp}, we also discuss how to derive the structure of the Kontsevich-graph expansion for $\bar{S}^K$ in terms of elementary methods involving Butcher series \cite{Bu} (see Appendix \ref{asub:RT}) applied to the analytic formulas of Section \ref{sec:cangenf}. This extends analogous results of \cite{CD} for the source map $\a_\pi$.

\subsection{Formal families of maps and groupoid structures}

Our main objects of study in this section will involve formal families of local symplectic groupoids. To specify what we mean by those, we first review some standard notation and facts about series on a formal parameter.

First, we denote formal power series on the formal parameter $\e$ with real coefficients by $\R\bep$.
We will think of $\R\bep$ as infinite jets of functions on a formal neighborhood of $t=0$ on an interval $I=[0,\delta)$ so that, thinking of the inductive limit as $\delta \to 0$, and use the notation $ \R\bep = C^\infty(\Iform).$
This idea can be formalized, but we will only be using it for notational clarity.
For any manifold $N$, we define
$$ C^\infty(N)\bep=C^\infty(N)\otimes \R\bep \in \bar f_\e = \sum_{n\geq 0} \e^n \ f_n, \ \text{with $f_n \in C^\infty(N)$.}$$
We can then think of $C^\infty(N)\bep$ as $C^\infty(N \times \Iform)$ and thus as defining \emph{formal families of functions on $N$}. Given a smooth family $\hat f\equiv (f_t)_{t\in [0,\delta]}$ in the sense of Section \ref{subsec:cangenfun}, there is an induced formal $\e$-family defined by the Taylor series of $f_t$ around $t=0$, denoted
$$ \Tay_\e(\hat f) = \sum_{n\geq 0} \frac{\e^n}{n!} \ \partial^n_t f_t|_{t=0}.$$
(Every formal family is the formal Taylor series of some smooth family, wildly non-unique.)
A \emph{formal family of maps} $\bfu$, $$\bfu: N_1 \times \Iform \to N_2,\text{ is defined to be an algebra morphism }
 \bfu^*: C^\infty(N_2) \to C^\infty(N_1)\bep.$$
Ordinary maps $f:N_1 \to N_2$ can be seen as constant formal families defined by extending $f^*$ linearly over $\R\bep$. (Notice that an $\bfu_\e \in C^\infty(N)\bep$ can be seen as a formal family of maps $\bfu:N\times \Iform \to \R$, where $\bfu_\e \equiv \bfu^*(\e)$.) 
Composition of such formal families of maps is defined in the obvious way as $ (\bfu\circ \bar g)^* = \bar g^* \circ \bar f^*,$
where $\bar g^*$ is extended $\R\bep$-linearly (geometrically, the extension corresponds to the pullback along the projection $N\times \Iform \to N$). The formal Taylor expansion procedure can be extended to smooth families of maps 
$$\hat f: N_1 \times [0,1] \to N_2 \text{ yielding } \Tay(\hat f): N_1 \times \Iform \to N_2$$
and satisfying 
$$ \Tay(\hat g\circ \hat f) = \Tay(\hat g)\circ \Tay(\hat f)$$
as a consequence of the chain rule.
The formal pullback $\bfu^*$ can be naturally extended to formal families of forms $\bfu^*:\Omega^k(N_2)\bep \to \Omega^k(N_1)\bep$ and, also, one can naturally define formal families of tensors on $N$. In particular, we consider formal families of Poisson structures to be
$$ \bpi \in \mathcal{X}^2(M)\bep, \ \bpi_\e = \sum_{n\geq 0} \e^n \ \pi_n, \ \pi_n \in \mathcal{X}^2(N) ,$$
which satisfy the Jacobi identity $[\bpi_\e,\bpi_\e]=0$ to all orders in $\e$ (here $[,]$ is extended bilinearly over $\R\bep$).

\begin{definition}
	A \emph{formal ($1$-parameter) family of local symplectic groupoids} (with constant symplectic structure) is the data $\bar G_\e=(P,\omega,M, \ba,\bb,\bar{\i},\bar{m})$ where the structure maps are formal families of maps satisfying the obvious adaptation of the local groupoid axioms to the composition rules given above, and 
	$$ \bar m_\e^*\omega - pr_1^*\omega - pr_2^*\omega =_{M^{(2)}} 0.$$
\end{definition}
In the above definition we focused on formal families with constant spaces and symplectic structure not to deviate from our main objects of study, but the definition can be obviously generalized.

Notice that given a smooth $1$-parameter family of local symplectic groupoids $\hat G\equiv (G_t)_{t\in [0,1]}$, as defined in Section \ref{subsec:cangenfun}, with constant symplectic structure $\omega$, there is an induced formal family of local symplectic groupoids $\bar G$ defined by $(P,\omega,M)$ together with the formal Taylor series around $t=0$ of the sturcture maps of $G_t$. We denote this induced formal family as
$$ \bG = \Tay(\hat G).$$
For any formal family $\bar G$, there is a unique formal family of Poisson structures $\bpi$ on $M$ such that the source $\ba$ defines a Poisson map, i.e.
$$ \{\ba_\e^*f_1,\ba_\e^*f_2\}_\omega = \ba_\e^*\left(\bpi_\e(df_1,df_2) \right), \ \forall f_1,f_2 \in C^\infty(M)\bep.$$
The proof of this fact is analogous to the case of ordinary local symplectic groupoids, when expressed in terms of pullbacks along maps. We say that $\bG$ \emph{integrates the formal family} $(M,\bpi)$.
Given an ordinary Poisson structure $\pi$ on $N$, the formal family defined by
$$ \bpi_\e = \e \pi$$
will be called the \emph{natural formal family of Poisson structures induced by $\pi$} and will be our main focus. We now observe that this family has an integration to a formal family of local symplectic groupoids as a simple consequence of the formal Taylor expansion procedure applied to the our earlier results on smooth families.

\begin{proposition}\label{lma:tayinttay}
	If $\hat G = (G_t)_{t\in [0,1]}$ is a smooth family of local symplectic groupoids integrating $(M,\pi_t)_{t\in [0,1]}$, then $\Tay(\hat G)$ is a formal family of local symplectic groupoids integrating $(M, \Tay_\e(\pi_t))$. In particular, when $\hat G = \hat G_{\pi}$ is the natural smooth family associated to $\pi$ (see Section \ref{subsec:cangenfun}), then $$\bG_\pi := \Tay(\hat G_{\pi})$$ defines a formal family of local symplectic groupoids integrating $\e \pi$.
\end{proposition}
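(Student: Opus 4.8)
The plan is to exploit the fact, recalled just before the statement, that the Taylor-expansion operation $\Tay$ is functorial with respect to composition, $\Tay(\hat g\circ\hat f)=\Tay(\hat g)\circ\Tay(\hat f)$, so that it transports every defining identity of a smooth family of local symplectic groupoids to the corresponding formal identity. I would organise the argument as three verifications — the algebraic groupoid axioms, the multiplicativity of $\omega$, and the integration of the correct formal Poisson structure — followed by the specialisation in the final clause.

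For the algebraic axioms I would set $\ba=\Tay(\hat\a)$, $\bb=\Tay(\hat\b)$, $\bm=\Tay(\hat m)$ and $\bar\i=\Tay(\hat\i)$, the units $1:M\hookrightarrow P$ being $t$-constant and hence unchanged. Each local groupoid axiom is an identity between germs of compositions of the structure maps near the relevant $M^{(k)}$; applying $\Tay$ to both sides and using its compositional functoriality passes each axiom verbatim to $\ba,\bb,\bm,\bar\i$, so these define a formal family of local groupoids. For multiplicativity, since $\omega$ is $t$-independent its pullback commutes with Taylor expansion, $\bm^*\omega=\Tay(\hat m^*\omega)$, and similarly for the ($t$-constant) projections; taking Taylor coefficients of the identity $m_t^*\omega-pr_1^*\omega-pr_2^*\omega=_{M^{(2)}}0$, valid for each $t$, then yields $\bm^*\omega-pr_1^*\omega-pr_2^*\omega=_{M^{(2)}}0$.

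For integration I would use that the Poisson bracket $\{\,,\,\}_\omega$ on the fixed symplectic manifold $(P,\omega)$ is $t$-independent, hence extends $\R\bep$-bilinearly and commutes with $\Tay$. Taylor-expanding in $t$ the Poisson-map identity $\{\alpha_t^*f_1,\alpha_t^*f_2\}_\omega=\alpha_t^*(\pi_t(df_1,df_2))$ gives $\{\ba^*f_1,\ba^*f_2\}_\omega=\ba^*\big(\Tay_\e(\pi_t)(df_1,df_2)\big)$ for all $f_1,f_2\in C^\infty(M)\bep$, so by the uniqueness of the formal Poisson structure integrated by $\Tay(\hat G)$ it must equal $\Tay_\e(\pi_t)$. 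For the final clause, the natural family $\hat G_\pi$ integrates $(M,t\pi)$, so $\pi_t=t\pi$ is linear in $t$, whence $\Tay_\e(\pi_t)=\e\pi$ and $\bG_\pi$ integrates $\e\pi$.

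The argument is in essence a bookkeeping exercise in the functoriality of $\Tay$, so I do not expect a genuine obstacle; the only points needing a moment's care are the commutation of pullback of the $t$-constant forms with Taylor expansion used in the multiplicativity step and the compatibility of the germ-near-units domain conditions with the formal composition rules, both of which are routine given the setup.
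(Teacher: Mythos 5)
Your proposal is correct and follows exactly the route the paper intends: the paper states this Proposition without a written proof, presenting it as "a simple consequence of the formal Taylor expansion procedure" together with the functoriality $\Tay(\hat g\circ\hat f)=\Tay(\hat g)\circ\Tay(\hat f)$ and the uniqueness of the formal Poisson structure integrated by a formal family, all of which are established immediately beforehand. Your three-step verification (algebraic axioms via functoriality of $\Tay$, multiplicativity via $t$-constancy of $\omega$, and identification of the integrated formal Poisson structure via Taylor expansion of the Poisson-map identity plus uniqueness) supplies precisely the bookkeeping the paper leaves to the reader.
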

We will refer to $\bG_\pi$ as the \emph{natural formal family of local symplectic groupoid (germs)} integrating $(M,\e \pi)$.
%
%

\bigskip


\begin{example}
	Let $M$ be a coordinate space and $\pi$ a constant Poisson structure on $M$. We want to describe a formal family of symplectic groupoids $\bG_\e$ integrating the natural formal family $(M,\e \pi)$. Following Proposition \ref{lma:tayinttay}, we conclude that $\bG_\e=\Tay_\e(G_{t\pi})$ provides such an integration, where $G_{t\pi}$ is the natural family of canonical symplectic groupoids integrating $t\pi, \ t\in [0,1]$. The structure maps on $\bG_\e$ are just the same as the ones given in Example \ref{ex:Gpiconst} with $\pi$ replaced by $\e \pi$, since it only enters linearly.
\end{example}




\begin{remark}\textsc{(Formal groupoids vs. formal families of groupoids)} In \cite{Kar}, Karabegov introduces "formal symplectic groupoids" whose structure maps are formal Taylor expansions in the variables $p$ representing the normal directions to $\zero: M \hookrightarrow T^*M$. These are, a priori, different from the above formal $1$-dimensional families in which only the family parameter $\e$ is formal. Nevertheless, observe that, when special rescaling properties hold, for example $\a_{t\pi}(x,p)=\a_\pi(x,tp)$ (Lemma \ref{lma:propapi}) for the canonical Karasev source / realization map defined by \eqref{eq:alphapi}, then the two expansions in $t$ or $p$ can be related and yield essentially the same result.
\end{remark}

\subsection{Taylor expansion of the natural family and the Kontsevich-trees formal family}\label{subsec:fromS}
In this subsection, we restrict ourselves to the case in which $M$ is a coordinate space, as in Section \ref{sec:cangenf}.
We also restrict ourselves to formal families of local symplectic groupoids of the form 
$$\bar G=(T^*M,\omega_c,0^{T^*M}, \ba,\bb,\bar{\i},\bar{m})$$ integrating a formal family of underlying coordinate Poisson manifolds $(M,\bpi_\e)$. We say that 
$$ \bS: M^* \times M^* \times M \times \Iform \to \R$$
is a {\bf formal family of coordinate generating functions for $\bar G$} when
$$ \bm_\e^* p_1 = p_1, \bm_\e^* p_2 = p_2, \bm_\e^*x_3 = x, $$ 
$$ \bm_\e^* x_1 = \partial_{p_1}\bS_\e, \ \bm_\e^* x_2 = \partial_{p_2}\bS_\e, \ \bm_\e^*x_3 = \partial_x \bS_\e,$$ 
where, on the left hand sides, we are thinking of the natural coordinate functions for $((x_1,p_1),(x_2,p_2),(x_3,p_3)) \in T^*M \times T^*M \times T^*M = \P_{T^*M}$ and, on the right hand sides, we are using the identification $\nu_c:T^*X \simeq \P_{T^*M}$ of eq. \eqref{eq:nucoord} and think of functions of the natural coordinates for $(p_1,p_2,x)\in X = M^* \times M^* \times M$. This provides the direct formal-family analogue of the coordinate generating functions considered in Section \ref{subsec:generalgenfuncs}.

\begin{example}\label{ex:formalS0andct}
	The groupoid $G_0$ integrating $\pi=0$ can be naturally extended to a constant formal family $\bG_0$. The function
	$$ \bS_0(p_1,p_2,x) = (p_1+p_2)(x),$$
	thought of as a constant formal family, defines a formal family of coordinate generating functions for $\bG_0$. More generally, when $\pi$ is a constant Poisson structure on $M$ and $\bG=\bG_\pi$ is given by the natural family, then
	$$ \bS_\e(p_1,p_2,x) = (p_1+p_2)(x) + \frac{\e}{2} \pi(p_1,p_2)$$
	defines a formal coordinate generating function for $\bG_\pi$. 
\end{example}

\begin{example}\label{ex:bSK}\textsc{(The Kontsevich-trees generating function.)}
	In this example we summarize the definition of a formal family of generating functions, $\bS^K_\pi$, associated to any coordinate $(M,\pi)$ as introduced in \cite{CDF}. The formula for $\bS^K_\pi$ can be seen as an extract from Kontsevich's star product formula \cite{Kontquant} for quantizing $(M,\pi)$ in which only certain tree-type graphs, called \emph{Kontsevich trees} $\Gamma \in T_{n,2}$, are considered. Specifically\footnote{The formal generating function depends on the Poisson structure	through the symbols of the Kontsevich operators. In \cite{CDF}, the scaling of the Poisson structure is different from the one used here: with the present notation, they work with $\bS^K_{2\pi}$ instead of $\bS^K_\pi$, resulting in the fact that their formal family integrates the Poisson structure $2\epsilon\pi$	instead of our $\epsilon\pi$; see also \cite[\S 4.1]{CD}.},
	\begin{equation}\label{eq:defSK}
	\bS^K_\pi (p_1,p_2,x):= (p_{1}+p_{2})x+\sum_{n=1}^{\infty}\frac{\epsilon^{n}}{n!2^n}\sum_{\Gamma\in T_{n,2}}W_{\Gamma}\hat{B}_{\Gamma}\left(\pi\right)(p_{1},p_{2},x),
	\end{equation}
	where the Kontsevich symbols $\hat{B}_\Gamma(\pi)$ associated to each graph $\Gamma$ are recalled in Appendix \ref{asub:Ktrees} and the weights $W_\Gamma$ were defined in \cite{Kontquant} (see \cite{CD} for the conventions used here). We refer to $\bS^K_\pi$ as to the {\bf Kontsevich-trees formal family}.
	This formal family generates a formal family of symplectic groupoids, that we shall denote $\bG_\pi^K$, which is described in detail in \cite{CDF} and has $P=T^*M$, $\omega=\omega_c$ and $1_x = (x,0)$. 
\end{example}

\begin{remark}\label{rmk:bGKandbGpi}
	In \cite{CD}, it was shown that the source map of $\bG^K_\pi$ coincides with $\Tay((\a_{t\pi})_{t\in [0,1]})$, the formal Taylor expansion along the natural family of source maps (defined by \eqref{eq:alphapi} using $t\pi$). 	A formal-family adaptation of the result stating that the underlying strict symplectic realization data fully characterizes the germ of the local symplectic structure around the units implies that the germ of $\bG^K_\pi$ and that of the canonical formal family $\bG_\pi$ coincide. We shall also provide an alternative argument for this fact below.
\end{remark}

As happened with smooth families of local symplectic groupoids in the previous subsection, we obtain the following a direct result of taking Taylor expansions of known smooth families.
\begin{proposition}\label{prop:tayS}
	Let $\hat G$ be a smooth family of local symplectic groupoids having $\hat S$ as a smooth family of coordinate generating functions. Then, $\Tay (\hat S)$ defines a formal family of coordinate generating functions for $\bG=\Tay(\hat G)$. In particular, $$ \bS_\pi := \Tay(\hat S_\pi),$$
	where $\hat S_\pi$ denotes the natural family associated to $(M,\pi)$ (as defined in Section \ref{subsec:cangenfun}), defines a formal family of generating functions for the natural formal family $\bG_\pi$.
\end{proposition}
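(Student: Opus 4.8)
The plan is to read the generating-function condition as a finite collection of pullback identities holding for every $t\in[0,1]$, and then to apply the Taylor-expansion operation $\Tay$ to each of them. Writing $\hat m$ for the smooth family of multiplications underlying $\hat G$ and $(S_t)_t$ for $\hat S$, the hypothesis that $\hat S$ is a smooth family of coordinate generating functions unravels (via the identification $\nu_c$ of eq. \eqref{eq:nucoord}, and with $m_t^*$ denoting the pullback of the ambient coordinate functions of $\P_{T^*M}$ along the graph parametrization determined by $\hat m$) into the identities in $C^\infty(X\times[0,1])$
\begin{gather*}
m_t^* x_1 = \partial_{p_1}S_t,\quad m_t^* x_2 = \partial_{p_2}S_t,\quad m_t^* p_3 = \partial_x S_t, \\
m_t^* p_1 = p_1,\quad m_t^* p_2 = p_2,\quad m_t^* x_3 = x,
\end{gather*}
each holding smoothly in $(p_1,p_2,x,t)$ near $X_0\times[0,1]$. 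The goal is precisely to produce the $\e$-formal analogues of these six identities for $\bm_\e:=\Tay(\hat m)$ and $\bS_\e:=\Tay(\hat S)$, since that is exactly the definition of $\bS=\Tay(\hat S)$ being a formal family of generating functions for $\bG=\Tay(\hat G)$.

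First I would invoke two compatibility properties of $\Tay$. On the one hand, the identity $\Tay(\hat g\circ\hat f)=\Tay(\hat g)\circ\Tay(\hat f)$ recorded earlier in this section shows that applying $\Tay$ to the left-hand sides converts the smooth pullbacks $m_t^*$ into the formal pullback $\bm_\e^*=\Tay(\hat m)^*$ of the ordinary (hence constant-formal) coordinate functions $x_1,p_1,\dots,p_3$. On the other hand, since $\hat S$ is jointly smooth in $(p_1,p_2,x,t)$, each operator $\partial_t^n|_{t=0}$ commutes with the spatial derivatives $\partial_{p_1},\partial_{p_2},\partial_x$ by Clairaut's theorem, so $\Tay(\partial_{p_1}S_t)=\partial_{p_1}\Tay(\hat S)=\partial_{p_1}\bS_\e$, and likewise for $\partial_{p_2}$ and $\partial_x$. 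Combining these and applying $\Tay$ to each of the six identities above yields
\begin{gather*}
\bm_\e^* x_1 = \partial_{p_1}\bS_\e,\quad \bm_\e^* x_2 = \partial_{p_2}\bS_\e,\quad \bm_\e^* p_3 = \partial_x \bS_\e, \\
\bm_\e^* p_1 = p_1,\quad \bm_\e^* p_2 = p_2,\quad \bm_\e^* x_3 = x,
\end{gather*}
which is the desired formal generating condition for $\bG=\Tay(\hat G)$.

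The only point requiring care — and what I would flag as the main (if modest) obstacle — is the interchange performed in the previous step: one must verify that taking the formal Taylor jet in $t$ genuinely commutes both with the spatial differentiations appearing on the right and with the composition/pullback appearing on the left. Neither is deep: the former is Clairaut for the jointly smooth map defining $\hat S$, and the latter is the chain-rule statement already established for formal families of maps. I would also note that all implicit-function content (for instance, solving for the source base point along the family) is already absorbed into the hypothesis that $\hat S$ is a \emph{smooth} family of generating functions, so no new solvability argument is needed here.

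Finally, for the ``in particular'' assertion I would specialize $\hat G=\hat G_\pi$ and $\hat S=\hat S_\pi$. By Corollary \ref{cor:smfam} these form a smooth family of local symplectic groupoids together with a smooth family of coordinate generating functions integrating $(M,t\pi)_{t\in[0,1]}$, so the general case just proved applies verbatim. By Proposition \ref{lma:tayinttay} the formal family $\bG_\pi=\Tay(\hat G_\pi)$ integrates $\e\pi$, and hence $\bS_\pi=\Tay(\hat S_\pi)$ is a formal family of generating functions for $\bG_\pi$, as claimed.
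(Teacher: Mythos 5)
Your proposal is correct and follows exactly the route the paper intends: the paper states this proposition as ``a direct result of taking Taylor expansions of known smooth families,'' relying on the identity $\Tay(\hat g\circ\hat f)=\Tay(\hat g)\circ\Tay(\hat f)$ and the commutation of $t$-jets with the spatial derivatives, which is precisely what you make explicit. Your specialization via Corollary \ref{cor:smfam} and Proposition \ref{lma:tayinttay} for the ``in particular'' part also matches the paper's implicit argument.
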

We refer to $\bS_\pi$ defined above as to the \emph{natural formal family of generating functions integrating $(M,\e\pi)$}. 

\begin{example}
	Consider $M=\gg^*$ endowed with the linear Poisson structure $\pi$ as in Example \ref{ex:Gpilin}. Following Example \ref{ex:natStlin}, the natural formal family of generating functions integrating $(M,\e \pi)$ is given by the formula
	$$ \bS_\e(p_1,p_2,x) = \frac{1}{\e}BCH(\e p_1,\e p_2)( x).$$
	Notice that only non-negative powers of $\e$ appear above, by the homogeneity properties of the BCH series.
\end{example}

We now turn to consider the SGA equation for formal families. Recall from Section \ref{subsub:sga}, that a coordinate generating function satisfies the SGA equation and that the analogue of this equation for formal families $\bS$ was introduced in \cite{CDF}. From that paper we also recall the following notions: a formal family $\bS$ is called a \emph{natural deformation} of $S_0$ given Example \ref{ex:formalS0andct} if 
\begin{equation} \label{eq:formaldefS0} \bS_\e = \bS_0 + \sum_{n\geq 1}\e^n \ \bS_n ,\end{equation}
where $\bS_n\in C^\infty(X)$ satisfy:
\begin{enumerate}
	\item $\bS_n(p_1,p_2,x)$ is a polynomial on $p_1,p_2$,
	\item $\bS_n(\lambda p_1,\lambda p_2,x)= \lambda^{n+1} \bS_n(p_1,p_2,x)$
	\item $\bS_n(p,0,x)=0=\bS_n(0,p,x)$
	\item the homogeneous part of degree $k$, $\bS^{(k)}_n(p_1,p_2,x)$, of $\bS_n$ on the variable $p_1$ satisfies $\bS^{(k)}_n(p,p,x)=0$.
\end{enumerate}

As a corollary of Proposition \ref{prop:tayS}, we obtain:
\begin{corollary}\label{cor:bSpiSGA}
	The natural formal family of generating functions $\bS_\pi$ integrating $(M,\e\pi)$ is a solution to the SGA equation for formal families, as defined in \cite{CDF}. Moreover, $\bS_\pi$ defines a natural deformation of $\bS_0$ in the sense recalled above.
\end{corollary}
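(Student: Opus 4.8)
The plan is to deduce both claims by applying the formal Taylor expansion $\Tay$ to the non-formal facts about the smooth family $\hat S_\pi=(S_{t\pi})_{t\in[0,1]}$ from Section \ref{subsec:cangenfun}; the only genuinely new input is the fourth defining condition of a natural deformation.

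For the SGA equation I would argue via associativity. By Proposition \ref{lma:tayinttay} the formal family $\bG_\pi=\Tay(\hat G_\pi)$ is a formal family of local symplectic groupoids integrating $\e\pi$, and by Proposition \ref{prop:tayS} the series $\bS_\pi=\Tay(\hat S_\pi)$ generates it. Since (for connected $M$) the formal SGA equation is equivalent to formal associativity of the generated multiplication $\bm$, and $\bm=\Tay(\hat m)$ is associative because $\Tay$ commutes with composition---so that the identity $\hat m\circ(\hat m\times\mathrm{id})=\hat m\circ(\mathrm{id}\times\hat m)$ valid for every $m_{t\pi}$ passes to the formal limit---the formal SGA equation holds for $\bS_\pi$. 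Alternatively one can expand the smooth SGA equation \eqref{eq:SGA}, satisfied by each $S_{t\pi}$, in powers of $t$, using that the implicitly defined $\bar x,\bar p,\tilde x,\tilde p$ depend smoothly on $t$ and that $\Tay$ is compatible with the composition of the lagrangian relations of Remark \ref{rmk:lagrels}.

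For the natural-deformation conditions, the form \eqref{eq:formaldefS0} with $\bS_0=(p_1+p_2)(x)$ is immediate from $S_0=(p_1+p_2)(x)$ in \eqref{eq:Spinaturprop}. Condition (2) follows by applying $\tfrac1{n!}\partial_t^n|_{t=0}$ to $S_{t\pi}(\lambda p_1,\lambda p_2,x)=\lambda S_{\lambda t\pi}(p_1,p_2,x)$: the chain rule produces a factor $\lambda^n$ from rescaling $t$, which together with the prefactor $\lambda$ yields $\bS_n(\lambda p_1,\lambda p_2,x)=\lambda^{n+1}\bS_n(p_1,p_2,x)$. Condition (1) is then automatic, since for each fixed $x$ a smooth function homogeneous of the positive integer degree $n+1$ in $(p_1,p_2)$ coincides with its degree-$(n+1)$ Taylor polynomial, hence is polynomial. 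Condition (3) is obtained by Taylor-expanding the $t$-independent identity $S_{t\pi}(p,0,x)=p(x)=S_{t\pi}(0,p,x)$ of \eqref{eq:Sp0x}, whose coefficients of order $n\ge1$ vanish.

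The crux is condition (4). For it, it suffices to establish the non-formal identity $S_{t\pi}(p_1,p_2,x)=(p_1+p_2)(x)$ whenever $p_1$ and $p_2$ are proportional, for every $t$: Taylor-expanding then gives $\bS_n(sp,p,x)=0$ for all $n\ge1$, and writing $\bS_n=\sum_k\bS_n^{(k)}$ by degree in $p_1$ one has $\bS_n(sp,p,x)=\sum_k s^k\,\bS_n^{(k)}(p,p,x)$, so that each $\bS_n^{(k)}(p,p,x)=0$, which is exactly condition (4). This proportional identity unifies both halves of \eqref{eq:Sp0x} and the identity $S_{t\pi}(p,p,x)=2p(x)$ of \eqref{eq:Spinaturprop}. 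To prove it I set $p_1=sp$, $p_2=p$ and examine the hamiltonian $H=s\,\b_\pi^*p+\a_\pi^*p$ governing \eqref{eq:DarbS}. Following verbatim the proof of \eqref{eq:rhamu}, and using that any multiple $cp$ is proportional to $p$ so that \eqref{eq:propsalphaini} gives $p(\a_\pi(x,cp))=p(x)=p(\b_\pi(x,cp))$ for all $c$, I expect to show $r\phi^H_u(x_0,0)=u(s+1)p$, $p\big(q\phi^H_u(x_0,0)\big)=p(x_0)$, and $L_E(H)\big(\phi^H_u(x_0,0)\big)=0$ for all $u$. Substituting these into \eqref{eq:DarbS} collapses the integral term and gives $S_{t\pi}(sp,p,x)=(s+1)p(x_0)=(s+1)p(x)=(p_1+p_2)(x)$. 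The main obstacle is verifying these three flow identities for the mixed hamiltonian $H$; the decisive point is that proportionality of the momentum direction to $p$ is precisely what lets the skew-symmetry arguments behind \eqref{eq:propsalphaini}--\eqref{eq:rhamu} handle the $\a_\pi$- and $\b_\pi$-contributions simultaneously.
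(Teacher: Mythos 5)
Your proposal is correct, and for most of the statement it follows the paper's own route: the formal SGA equation is deduced from Proposition \ref{prop:tayS} (equivalently, from Taylor-expanding the smooth SGA equation in $t$), the form \eqref{eq:formaldefS0} and items (2), (3) come from \eqref{eq:Spinaturprop} and \eqref{eq:Sp0x} exactly as in the paper, and item (1) follows from item (2) by the same homogeneity-implies-polynomial observation. Where you genuinely diverge is item (4). The paper disposes of it in one line, as ``a direct consequence of Taylor expanding the third property in \eqref{eq:Spinaturprop}'', i.e.\ of $S_{t\pi}(p,p,x)=2p(x)$; but expanding that identity only yields $\sum_k \bS_n^{(k)}(p,p,x)=0$, the vanishing of the \emph{sum} of the homogeneous parts on the diagonal, and since all the $\bS_n^{(k)}(p,p,x)$ have the same total degree $n+1$ in $p$ one cannot separate them by rescaling $p$ (a polynomial such as $\langle p_1,p_2\rangle(\langle p_1,p_1\rangle-\langle p_2,p_2\rangle)$ satisfies (1)--(3) and vanishes on the diagonal without satisfying (4)). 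Your strengthening --- proving the one-parameter identity $S_{t\pi}(sp,p,x)=(s+1)p(x)$ and then reading off each $\bS_n^{(k)}(p,p,x)$ as the coefficient of $s^k$ --- is exactly what is needed to close this, and your flow computation for the mixed hamiltonian $H=s\,\b_\pi^*p+\a_\pi^*p$ does go through: since $H(x,cp)=(s+1)p(x)$ for all $c$ by \eqref{eq:propsalphaini}, uniqueness of solutions forces $r\ham^H_u(x_0,0)=u(s+1)p$, and the same differentiation in the direction $p$ as in the proof of \eqref{eq:rhamu} gives $p(q\ham^H_u(x_0,0))=p(x_0)$ and $L_E(H)=0$ along the flow, so \eqref{eq:DarbS} collapses to $(s+1)p(x_0)=(s+1)p(x)$. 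In short, your argument is not only valid but supplies a justification for item (4) that the paper's proof leaves implicit at best.
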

\begin{proof}
	The first statement is a direct consequence of $\bS_\pi$ being a generating function for $\bG_\pi$, namely of Proposition \ref{prop:tayS}, and standard properties of taking Taylor series. The second statement is a consequence of properties \eqref{eq:Spinaturprop} of the smooth family $\hat S_\pi$, as we now detail. To begin with, it is clear from the second eq. in \eqref{eq:Spinaturprop} that $\bS_\pi$ takes the form \eqref{eq:formaldefS0} above. Similarly, item (3) in the definition above follows from property \eqref{eq:Sp0x} of $S_\pi$. Item (2) is a direct consequence of the first (rescaling) property of the natural smooth family $\hat S_\pi$ in eq. \eqref{eq:Spinaturprop}, by using the chain rule along $t\mapsto \lambda t$. Item (1) follows from item (2) since an homogeneous smooth function, here each $\bS_n$, must be a polynomial. Finally, item (4) is a direct consequence of Taylor expanding the third property in eq. \eqref{eq:Spinaturprop}.
\end{proof}

Finally, we recall the following result from \cite{CDF} which characterizes the Kontsevich-trees formal family.
\begin{theorem}(\cite[Thm. 1]{CDF})
	For each Poisson structure $\pi$ on the coordinate space $M$, there exists a unique natural deformation of the trivial formal family $\bS_0$ of generating functions which satisfies the SGA equation for formal families and integrates $\e \pi$ (recall eq. \eqref{eq:pifromS}). Moreover, this formal family is given by the Kontsevich-trees formal family $\bS^K_\pi$  recalled in Example \ref{ex:bSK}.
\end{theorem}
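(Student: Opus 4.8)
The plan is to separate the statement into a uniqueness assertion and an identification assertion, and to reduce the existence half to results already in hand. By Corollary~\ref{cor:bSpiSGA}, the natural formal family $\bS_\pi$ is a natural deformation of $\bS_0$ that solves the SGA equation and integrates $\e\pi$, so existence is free and the theorem reduces to two claims: (a) any two natural deformations solving \eqref{eq:SGA} and integrating $\e\pi$ coincide, and (b) the Kontsevich-trees family $\bS^K_\pi$ of Example~\ref{ex:bSK} is such a solution. Given (a) and (b), the unique solution is simultaneously $\bS_\pi$ and $\bS^K_\pi$.

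For (a), I would solve \eqref{eq:SGA} recursively in powers of $\e$. Writing $\bS_\e = \bS_0 + \sum_{n\ge 1}\e^n\bS_n$ with $\bS_0=(p_1+p_2)(x)$, the implicitly defined quantities $\bar x,\bar p,\tilde x,\tilde p$ inherit $\e$-expansions whose zeroth-order terms are the trivial values $\bar x=x,\ \bar p=p_1+p_2,\ \tilde x=x,\ \tilde p=p_2+p_3$ forced by $\bS_0$. Collecting the order-$\e^n$ part of \eqref{eq:SGA}, the new unknown $\bS_n$ enters both directly and through the order-$n$ variations of the implicit arguments, which multiply first derivatives of $\bS_0$; assembling these, the equation takes the form $\mathcal{L}(\bS_n)=R_n$, where $\mathcal{L}$ is an explicit linear operator (at most first order, determined by $\bS_0$) and $R_n$ is a universal polynomial expression in the already-determined $\bS_k$, $k<n$. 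The key step is to check that $\mathcal{L}$ is injective on the space of functions obeying the naturality conditions (homogeneity of degree $n+1$, polynomiality, vanishing on $p_1=0$ and $p_2=0$, and the diagonal condition), so that $\bS_n$ is pinned down by $R_n$; the base case is the integration condition \eqref{eq:pifromS}, which, since $\partial_{p_1}\partial_{p_2}\bS_0=0$, forces $\bS_1=\tfrac12\pi(p_1,p_2)$. This is the routine, triangular half of the argument.

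For (b), the naturality conditions for $\bS^K_\pi$ follow from structural features of the tree symbols $\hat{B}_\Gamma(\pi)$: each aerial vertex of $\Gamma\in T_{n,2}$ carries one copy of $\pi$ and the two ground vertices carry $p_1,p_2$, giving polynomiality and homogeneity of degree $n+1$ in $(p_1,p_2)$; trees with no edge landing on one ground vertex do not contribute the corresponding variable, giving vanishing at $p_1=0$ or $p_2=0$; and antisymmetry of $\pi$ yields the diagonal condition. The integration condition reduces to the single two-leg, one-vertex tree with weight $1$, producing $\tfrac{\e}{2}\pi(p_1,p_2)$ in agreement with \eqref{eq:pifromS}. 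The hard part — and the main obstacle — is verifying that $\bS^K_\pi$ satisfies the SGA equation itself, since this is a set of nontrivial identities among the Kontsevich weights $W_\Gamma$ rather than a formal manipulation. The conceptual source is the associativity of Kontsevich's star product $\star_\h$ from \cite{Kontquant}: substituting the ansatz \eqref{eq:starSP} into $(e^{\frac{i}{\h}p_1}\star_\h e^{\frac{i}{\h}p_2})\star_\h e^{\frac{i}{\h}p_3}=e^{\frac{i}{\h}p_1}\star_\h(e^{\frac{i}{\h}p_2}\star_\h e^{\frac{i}{\h}p_3})$ and matching the leading oscillatory exponent yields precisely \eqref{eq:SGA}. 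Making this rigorous at the formal level requires showing that the tree-graphs close under the associativity relation, i.e. that loop graphs do not feed into the leading exponent; I would establish this, as in \cite{CDF}, through the boundary-stratum analysis of Kontsevich's compactified configuration spaces and Stokes' theorem applied to the forms defining $W_\Gamma$, thereby transferring the known associativity of $\star_\h$ to the desired weight identities.
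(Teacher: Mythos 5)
You should first note that the paper does not actually prove this statement: it is imported verbatim from \cite{CDF} (hence the citation in the theorem header), and the only proof-related content the paper offers is the remark immediately following it, which sketches an \emph{alternative} route to the uniqueness half, namely the geometric argument of Section \ref{subsec:generalexist} (horizontality of the graph of multiplication plus the relative Poincar\'e lemma) transplanted to formal families. Your proposal instead reconstructs the original \cite{CDF} strategy: order-by-order recursion in $\e$ for uniqueness, and Stokes-type identities among the weights $W_\Gamma$ on compactified configuration spaces for the verification that $\bS^K_\pi$ solves the SGA equation. Both routes are viable; the geometric one suggested in the paper's remark buys uniqueness without touching the combinatorics of the recursion, while yours is self-contained at the level of the SGA equation and makes visible exactly where each naturality condition is used. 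Your reduction is also logically sound and non-circular: existence via Corollary \ref{cor:bSpiSGA} together with your parts (a) and (b) yields the theorem, and in fact recovers Theorem \ref{thm:main2SK} along the way, consistently with how the paper uses the cited result.

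Two caveats. First, in (a) the operator $\mathcal{L}$ is in fact zeroth order: the contributions of $\bS_n$ through the order-$n$ variations of $\bar x,\bar p,\tilde x,\tilde p$ cancel against the $-\bar p(\bar x)$ and $-\tilde p(\tilde x)$ terms because they multiply $\partial\bS_0$, so one is left with the pure difference (group-cocycle) operator $\bS_n(p_2,p_3,x)-\bS_n(p_1{+}p_2,p_3,x)+\bS_n(p_1,p_2{+}p_3,x)-\bS_n(p_1,p_2,x)$. Calling the injectivity of this operator on natural cochains ``routine'' understates it: that injectivity, which hinges on the diagonal condition (4) and the homogeneity/polynomiality constraints, is precisely the content of the uniqueness half of \cite[Thm.~1]{CDF} and is not carried out in your text. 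Second, in (b) the verification that the tree-level extract closes under associativity (i.e.\ that loop graphs do not contaminate the leading exponent) is likewise only named, not executed. Since the statement is itself a citation, deferring these two steps to \cite{CDF} is defensible, but as written your proposal is a correct roadmap of the original proof rather than a self-contained argument.
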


\begin{remark}
	The uniqueness part of the cited Theorem above can be proven using the same line of reasoning as in Section \ref{subsec:generalexist} but where the intermediate results must be replaced by their direct formal-family analogoues. The author thinks all those intermediate results indeed hold.
\end{remark}

Combining the above cited Theorem with the results of this subsection, in particular with Corollary \ref{cor:bSpiSGA}, we obtain our second main result of the paper:

\begin{theorem}\label{thm:main2SK}
	Let $(M,\pi)$ be a coordinate Poisson manifold and denote $\hat S_\pi$ the induced natural smooth family of coordinate generating functions integrating $(M,t\pi)_{t\in [0,1]}$ (see Section \ref{subsec:cangenfun}). Then, the formal Taylor expansion of $\hat S_\pi$ around $t=0$ coincides with the Kontsevich-trees formal family (see Example \ref{ex:bSK}): 
	$$ \bS^K_\pi = \Tay(\hat S_\pi).$$ 
\end{theorem}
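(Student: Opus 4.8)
The plan is to reduce the statement to the uniqueness clause of the cited theorem \cite[Thm. 1]{CDF}, which characterizes $\bS^K_\pi$ as the \emph{unique} natural deformation of the trivial family $\bS_0$ that both solves the SGA equation for formal families and integrates $\e\pi$. Writing $\bS_\pi := \Tay(\hat S_\pi)$, it therefore suffices to verify that $\bS_\pi$ enjoys these three properties (naturality, SGA, correct Poisson structure); the desired equality $\bS_\pi = \bS^K_\pi$ is then forced.

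Two of the three checks are already packaged in Corollary \ref{cor:bSpiSGA}: it asserts precisely that $\bS_\pi$ solves the formal SGA equation and that it is a natural deformation of $\bS_0$ in the sense of conditions (1)--(4) defining that notion. I would simply invoke it. (Recall that its proof rests on Proposition \ref{prop:tayS}, which makes $\bS_\pi$ an honest generating function for $\bG_\pi = \Tay(\hat G_\pi)$, together with the naturality identities \eqref{eq:Spinaturprop} and \eqref{eq:Sp0x} of the smooth family $\hat S_\pi$, which pass term-by-term through $\Tay$ to yield the homogeneity, polynomiality, and the required vanishing on the units and on the diagonal.)

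The one remaining point is that $\bS_\pi$ integrates the correct Poisson structure $\e\pi$, in the sense of eq. \eqref{eq:pifromS}, i.e. $\e\pi^{ij}|_x = 2\,\partial_{p_{1i}}\partial_{p_{2j}}\bS_\pi|_{(0,0,x)}$. For this I would combine Proposition \ref{lma:tayinttay}, which guarantees that $\bG_\pi = \Tay(\hat G_\pi)$ integrates $\e\pi$, with the relation \eqref{eq:pifromS} extracting the Poisson structure from a coordinate generating function: since $\bS_\pi$ generates $\bG_\pi$ and the source map of $\bG_\pi$ is a formal Poisson map for $\e\pi$, the structure read off from $\bS_\pi$ via \eqref{eq:pifromS} is exactly $\e\pi$. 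With all three hypotheses verified, \cite[Thm. 1]{CDF} yields $\Tay(\hat S_\pi) = \bS_\pi = \bS^K_\pi$.

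I do not expect a genuine analytic obstacle: the substantive work has been absorbed into Corollary \ref{cor:bSpiSGA} (naturality and SGA) and into the uniqueness half of \cite[Thm. 1]{CDF}, both of which I am taking as given, so the argument is essentially bookkeeping. The only place demanding real care is the \emph{normalization}, since the present $\bS^K_\pi$ differs by the scaling flagged in the footnote of Example \ref{ex:bSK} from the family used in \cite{CDF}. I would pin this down by matching the lowest-order term against the constant-$\pi$ computation of Example \ref{ex:formalS0andct}, namely $\bS_\e = (p_1+p_2)(x) + \tfrac{\e}{2}\pi(p_1,p_2)$, which is consistent with the $1/(n!\,2^n)$ normalization in \eqref{eq:defSK} at $n=1$ and confirms, via \eqref{eq:pifromS}, that the integrated structure is $\e\pi$ rather than $2\e\pi$.
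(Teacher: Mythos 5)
Your proposal is correct and follows essentially the same route as the paper: the paper also deduces the theorem by combining Corollary \ref{cor:bSpiSGA} (naturality and the formal SGA equation for $\bS_\pi=\Tay(\hat S_\pi)$, which integrates $\e\pi$ by Propositions \ref{lma:tayinttay} and \ref{prop:tayS}) with the uniqueness clause of \cite[Thm. 1]{CDF}. Your extra check of the normalization against the constant-$\pi$ case is a sensible precaution consistent with the footnote in Example \ref{ex:bSK}, but it is not part of the paper's argument.
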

From this result it follows that the formal family of local symplectic groupoids $\bG^K_\pi$ agrees with the canonical formal family $\bG_\pi$ (as previewed in Remark \ref{rmk:bGKandbGpi}). Notice that from the rescaling properties (eqs. \eqref{eq:Spinaturprop}) and the fact that $\e$ is formal, one can think of $\bG_\pi$ (and hence of $\bG^K_\pi$) as formal families of (global) Lie groupoids.
Finally, we observe that the fact that the Kontsevich-trees formal family $\bS^K_\pi$ is the Taylor expansion of a smooth family which also satisfies the (non-linear) SGA equation for $t\in [0,1]$ is non-trivial.

\subsection{The graph expansion of the canonical formal family: from Butcher series to Kontsevich trees}\label{subsec:graphexp}

In this subsection, we will show how a Kontsevich-tree graph expansion emerges structurally from the properties of the canonical smooth family $\hat S_\pi$ by applying standard Butcher series techniques \cite{Bu} (see also \cite{CD} and Appendix \ref{asub:RT}).
To that end, we focus on the formula for $S_{t\pi}$ given in Remark \ref{rmk:formulatseppi} and compute the structure of its Taylor expansion at $t=0$. 

To emphasize the relevant underlying properties, independently of $S_{t\pi}$ being a solution of the SGA equation, we consider an arbitrary map $\a:T^*M \dtod{0} M$ satisfying $\a(x,0)=x$ and a $t$-family of functions $S_t: X \dtod{X_0} \R$ defined by the formula
\begin{eqnarray}\label{eq:Stgena}
S_t(p_1,p_2,x(t)) = p_1(x_1) + p_2\a(x_1,tp_1) - \frac{1}{t}\int_0^t du \ [L_E(p_2\a)](\phi_u^{p_2\a}(x_1,tp_1))\nonumber \\ 
x(t) = q\phi^{p_2\a}_t(x_1,tp_1) \label{eq:Saforgraphs}
\end{eqnarray}
As mentioned above, the definition is chosen so that, when $\a = \a_\pi$ as defined in eq. \eqref{eq:alphapi}, then $S_t = S_{t\pi}$ yields the natural family of generating functions.

We first proceed to understand the more superficial part of $\bS_\e:=\Tay_\e(S_t)_t$, writing
\begin{eqnarray}
\bS_\e(p_1,p_2,x) &=& \sum_{n\geq 0} \e^n \ S_n(p_1,p_2,x) \nonumber \\
\Tay_\e(x(t)_t) &=& x_1 +\e \sum_{n\geq 0} \e^n \ \delta_n(p_1,p_2,x_1) \nonumber \\
\Tay_\e(\a(x,tp))_t &=& x + \sum_{m\geq 1} \e^m \ \a_m(x,p)  \nonumber \\
\Tay_\e\left( \frac{1}{t}\int_0^t du \ [L_E(p_2\a)](\phi_u^{p_2\a}(x_1,tp_1)) \right)_t &=& \sum_{m\geq 1} \e^m \ I_m(p_1,p_2,x_1)\nonumber
\end{eqnarray}
where we have used the hypothesis $\a(x,0)=x$ in the last two lines. From these we conclude, by Taylor expansion at $t=0$ of the defining equation \eqref{eq:Saforgraphs}, that $S_n$ can be recursively computed by
\begin{eqnarray}
S_0(p_1,p_2,x_1)& =& (p_1+p_2)(x_1) \nonumber \\
(m\geq 1) \ S_m(p_1,p_2,x_1) &=& -(p_1+p_2)\delta_{m-1}(p_1,p_2,x_1) \nonumber \\
&& - \sum_{k=1}^{m-1}\sum_{s=0}^{m-k-1}\sum_{l_1+\dots+l_k=s, l_j\geq 0} \frac{1}{k!} D^k_xS_{m-k-s}(\delta_{l_1},\dots,\delta_{l_k})|_{(p_1,p_2,x_1)}  \nonumber \\
&+& p_2 \a_m(x_1,p_1) - I_m(p_1,p_2,x_1) \label{eq:Smrecurs}
\end{eqnarray}
where $D^k_xS|_{(p_1,p_2,x)}$ denotes the k-th derivative of $S$ w.r.t. $x$, seen as a $k$-multilinear map $M^k \to \R$. 

\begin{remark}\label{rmk:S1}
	We can compute the first term $S_1$, yielding 
	$$ S_1(p_1,p_2,x) = -(p_1+p_2)(\delta_0(p_1,p_2,x)) + p_2(\a_1(x,p_1)) - I_1(p_1,p_2,x).$$
\end{remark}
Next, we begin with our use of Butcher series \cite{Bu} which we recall in Appendix \ref{asub:RT} (see also \cite{CD} for their use in the context of symplectic realizations). There are two main motivations for the appearance of these series: expanding the flow $\phi_u^{p_1\a}$ in terms of elementary differentials of the underlying hamiltonian vector field $X\equiv X^{p_1\a}$, and the result of \cite{CD} which expresses $\Tay_\e(\a_{t\pi})_t$ as a Butcher series (see eq. \eqref{eq:tayapi} below). 
We begin by considering the first motivation, and recall that
\begin{equation}\label{eq:Xpa} X|_{(x,p)}= -\partial_{p_j}(p_1\a)|_{(x,p)} \ \partial_{x^j} + \partial_{x^j}(p_1\a)|_{(x,p)} \ \partial_{p_j}.\end{equation}
From the standard theory of Butcher series, we get the following expansions in terms of topological rooted trees, $\RT$,
\begin{eqnarray}
\Tay_\e(x(t)_t) &=& x_1 +\sum_{n\geq 1} \e^n \sum_{\g \in \RT_n} \frac{1}{\sigma(\g)\g!} D^{x^j}_\g X (x_1,\e p_1)  \nonumber \\
\Tay_\e\left( \frac{1}{t}\int_0^t du \ [L_E(p_2\a)](\phi_u^{p_2\a}(x_1,tp_1)) \right)_t &=& 
\frac{1}{\e} \sum_{m\geq 1} \e^m \sum_{\g\in \RT_m} \frac{a'_\g}{\sigma(\g)} F^{L_E(p_2\a),X}_\g (x_1,\e p_1) \nonumber
\end{eqnarray}
where the elementary differential symbols $ D^{x^j}_\g X$ and $ F^{L_E(p_2\a),X}_\g$ are recalled in Appendix \ref{asub:RT}, as well as the symmetry factor $\sigma(\g)$, the tree factorial $\g!$ and the coefficients $a'_{[\g_1,\dots,\g_n]}=1/(\g_1!(|\g_1|+1)\dots \g_n!(|\g_n|+1))$.
These expressions, together with the recursive formula \eqref{eq:Smrecurs} imply that $S_m$ can be expressed in terms of elementary differentials for the hamiltonian vector field $X$ and the $\a_n$'s. Nevertheless, we need to go deeper into the structure in order to eventually get to Kontsevich trees.

The key observation is the following result, \cite[Thm. 24]{CD}: for $\a=\a_\pi$ defined by equation \eqref{eq:alphapi},
\begin{equation}\label{eq:tayapi}
 \Tay_\e (\a^j_\pi (x,tp) )_t = x^j + \sum_{m\geq 1} \e^m \sum_{\t\in \RT_m} \frac{c^B_\t}{\sigma(\t)} D^{x^j}_\t V_\pi(x,p)
\end{equation}
where the $\tau \mapsto c^B_\tau \in \R$ are universal coefficients (defined by iterated integrals and  generalizing Bernoulli numbers, see \cite{CD}) and the vector field $V_\pi$ is given by $ V_\pi|_{(x,p)} = \pi^{jk}(x) p_k \partial_{x^j}.$
(Notice that $V_\pi$ defines the ODE \eqref{eq:Ppeq} recalled in the introduction.) To continue with our structural exploration, we thus  assume the following form for $\a$,
\begin{equation}\label{eq:aVgen}
 \Tay_\e (\a^j(x,tp) )_t = x^j + \sum_{m\geq 1} \e^m \sum_{\t\in \RT_m} \frac{c_\t}{\sigma(\t)} D^{x^j}_\t V(x,p), \ \ V|_{(x,p)}= a^{jk}(x)p_k\partial_{x^j}
\end{equation}
for some coefficients $\t \mapsto c_\t$ and some smooth field of skew-symmetric matrices $a^{jk}(x)=-a^{kj}(x)$. We observe that we kept $V$ being homogeneous of degree $1$ with respect to $p \mapsto \lambda p$; this will play an important role in the sequel. Also notice that for arbitrary $c$ and $a$, the corresponding $S_t$ will not be a solution of the SGA equation.

The following step is to substitute expression \eqref{eq:aVgen} for $\a$ inside our earlier Butcher series expressed in terms of $X\equiv X^{p_2\a}$. Using the Leibniz rule for the derivatives involved, the results can be arranged graphically in {\bf networks of rooted trees}, $\rho \in \NRT$, and their symbols, $\sym^{p_2,V}_\rho(x,p)$, which are both defined in Appendix \ref{asub:networks} (see Figure \ref{fig:network_Kgraph}). With those definitions, we get the following expressions for our formal Taylor expansions: 
\begin{eqnarray}
\delta_n(p_1,p_2,x_1) &=& \sum_{\rho \in \NRT^*_{n+1}} c^\delta_\rho \sym^{p_2,V,x^j}_\rho (x_1,p_1)  \nonumber \\
p_{2j}\a^j_m(x_1,p_1) &=& \sum_{\rho \in \NRT_m} c^\a_\rho \sym^{p_2,V}(x_1,p_1) \\
I_m(p_1,p_2,x_1) &=& 
\sum_{\rho \in \NRT_m} c^I_\rho \sym^{p_2,V}_\rho(x_1,p_1) \nonumber
\end{eqnarray}
for some coefficient functions $c^\delta,c^\a, c^I: NRT^*\to \R$. Notice that $c^\a_\rho=0$ unless $\g(\rho)$ has only one vertex $r$, in which case $c^\a_\rho=c_{\rho(r)}$, and that $c^\delta, c^I$ can be expressed in terms of $c$'s and tree combinatorial factors.

These expressions, together with the recursive formula \eqref{eq:Smrecurs}, imply that $S_n$ is similarly given as a sum over networks: there exists a coefficient function $\tilde w: \NRT \to \R$ such that
	$$ \Tay_\e(S_t)_t(p_1,p_2,x) = (p_1+p_2)(x)+ \sum_{m\geq 1} \e^m \sum_{\rho \in \NRT_m} \tilde w_\rho \ \sym^{p_2,V}_\rho(x,p_1).$$
This can be proven by induction on $m\geq 1$, where the initial $m=1$ step follows from the expression given in Remark \ref{rmk:S1}, and the inductive step follows by applying the Leibniz rule in \eqref{eq:Smrecurs} and interpreting the resulting terms as the insertion of one network into another. The coefficients $\tilde w$ can be (non-uniquely) recursively defined  using eq. \eqref{eq:Smrecurs} in terms of the $\t \mapsto c_\t$ which define $\a$ through $\eqref{eq:aVgen}$. (This recursion for the $\tilde w$'s is not unique since the symbols $\sym^{p,V}_\rho$ are not always functionally independent, as the case $V=0$ shows.)

Finally, we observe below that for $V=V_\pi$ the network symbol $\sym^{p_2,V}_\rho$ (see Appendix \ref{asub:networks}) coincides with an associated Kontsevich symbol $\hat B^\pi_{\Gamma_\rho}$. We begin associating to a network $\rho \in \NRT_m$ a Kontsevich tree
$$ \Gamma_\rho \in KT_{m,2}$$
defined as follows. Recall that a Kontsevich graph consists of aerial and terrestrial vertices and edges (see Appendix \ref{asub:Ktrees}). All the internal vertices of the network become aereal and all the edges (internal or skeleton) of the network become aereal too. The internal edges are right-pointing in $\Gamma_\rho$ and taken with their orientation towards the root, while the skeleton edges are taken with their additional orientation and become left-pointing in $\Gamma_\rho$. For each vertex in a $\rho(v)$, which is not the source of an oriented external edge, we include a left-pointing edge into the first terrestrial vertex. When a vertex is the root in a $\rho(v)$, we also include a right-pointing edge into the second terrestrial vertex. It is easy to see that the resulting graph $\Gamma_\rho$ is indeed a Kontsevich tree and, moreover, that
	$$ \sym^{p_2,V_\pi}_\rho(x,p_1) = \pm \hat B^\pi_{\Gamma_\rho}(p_1,p_2,x)$$
	where $\hat B^\pi_{\Gamma_\rho}$ denotes the Kontsevich symbol recalled in Appendix \ref{asub:Ktrees} and the $\pm$ sign depends on $\rho$. (See Figure \ref{fig:network_Kgraph} and Examples \ref{ex:kontsymb}, \ref{ex:netsymb} for illustrations of the involved computations.)

We thus conclude the following:
\begin{proposition}\label{prop:structSK}
	Let $S_t$ be a family of functions defined by eq. \eqref{eq:Stgena} from a map $\a$ satisfying \eqref{eq:aVgen} and with $V=V_\pi$. Then, there exists a coefficient function $\tilde w: \NRT \to \R$ such that
	$$ \Tay_\e(S_t)_t(p_1,p_2,x) = (p_1+p_2)(x)+ \sum_{m\geq 1} \e^m \sum_{\rho \in \NRT_m}  \tilde w_\rho \ \hat B^\pi_{\Gamma_\rho}(p_1,p_2,x).$$
	Moreover, the coeficients $\tilde w$ can be (non-uniquely) recursively defined in terms of the coefficients $\t \mapsto c_\t$ which define the map $\a$.
\end{proposition}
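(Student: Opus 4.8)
The plan is to derive Proposition \ref{prop:structSK} by assembling the two structural facts established in the paragraphs immediately preceding its statement: the expansion of $\Tay_\e(S_t)_t$ as a sum of network symbols $\sym^{p_2,V}_\rho$, and the identification of each such symbol with a Kontsevich symbol $\hat B^\pi_{\Gamma_\rho}$ once $V=V_\pi$. Accordingly, I would organize the argument in two stages: first obtain the network expansion for a general $\a$ of the form \eqref{eq:aVgen}, and then specialize $V=V_\pi$ and transport the expansion to Kontsevich trees via the graph map $\rho\mapsto\Gamma_\rho$.

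For the first stage, I would start from the recursion \eqref{eq:Smrecurs} for the Taylor coefficients $S_m$, feeding into it the Butcher-type expansions of its three ingredients. The hypothesis \eqref{eq:aVgen} expresses $\Tay_\e(\a(x,tp))_t$ as a sum over rooted trees of elementary differentials of $V$; substituting this into the hamiltonian vector field $X=X^{p_2\a}$ of \eqref{eq:Xpa} and expanding the flow $\phi_u^{p_2\a}$ in the standard way yields network sums for $\delta_n$, for $p_{2j}\a^j_m$ and for $I_m$, all carried by the symbols $\sym^{p_2,V}_\rho$. I would then prove by induction on $m\geq 1$ that $S_m$ is itself such a network sum, $\sum_{\rho\in\NRT_m}\tilde w_\rho\,\sym^{p_2,V}_\rho(x,p_1)$. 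The base case $m=1$ is exactly the formula of Remark \ref{rmk:S1}. For the inductive step, the only non-immediate terms in \eqref{eq:Smrecurs} are the multiderivatives $D^k_x S_{m-k-s}(\delta_{l_1},\dots,\delta_{l_k})$; applying the Leibniz rule to the $x$-derivatives of a network symbol and inserting the $\delta$-networks at the differentiated vertices produces again a network symbol, which defines $\tilde w$ recursively in terms of the $c_\t$. Since the symbols $\sym^{p_2,V}_\rho$ need not be functionally independent (as the degenerate case $V=0$ shows), this recursion fixes $\tilde w$ only non-uniquely, exactly as asserted.

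For the second stage, I would specialize $V=V_\pi$, whose matrix is $a^{jk}(x)=\pi^{jk}(x)$, and use the combinatorial rule $\rho\mapsto\Gamma_\rho\in KT_{m,2}$ described just above the proposition: internal vertices and edges become aerial, skeleton edges become the left-pointing edges, and each vertex receives an edge into the first terrestrial vertex (and, at roots, into the second). The verification that $\Gamma_\rho$ is a genuine Kontsevich tree and that $\sym^{p_2,V_\pi}_\rho(x,p_1)=\pm\hat B^\pi_{\Gamma_\rho}(p_1,p_2,x)$ then proceeds by matching local data: a copy of $\pi$ at each aerial vertex, one derivative along each edge according to its orientation, and the insertions of $p_1$, $p_2$ and $x$ at the terrestrial vertices, as recalled in Appendices \ref{asub:networks} and \ref{asub:Ktrees}. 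Absorbing the sign into a redefinition of $\tilde w_\rho$ turns the network expansion of the first stage into the claimed Kontsevich-tree expansion, completing the argument.

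I expect the main obstacle to lie in the inductive step of the first stage: phrasing the Leibniz-rule differentiation of a network symbol as a clean \emph{insertion of one network into another} requires setting up the bookkeeping of $\NRT$ — which vertices are differentiated, and how the skeleton and internal edges of the inserted $\delta$-networks attach — precisely enough that the resulting coefficients are manifestly expressible through the $c_\t$. The graph identification of the second stage is more routine but still demands care with the $\pm$ signs, which I would track through the orientation conventions of the appendices rather than recompute term by term.
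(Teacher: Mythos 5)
Your proposal follows essentially the same route as the paper: the paper likewise derives the network expansion of $\Tay_\e(S_t)_t$ by induction on $m$ from the recursion \eqref{eq:Smrecurs}, with the base case given by Remark \ref{rmk:S1} and the inductive step handled by the Leibniz rule interpreted as insertion of one network into another, and then identifies $\sym^{p_2,V_\pi}_\rho$ with $\pm\hat B^\pi_{\Gamma_\rho}$ via the graph map $\rho\mapsto\Gamma_\rho$, absorbing signs into $\tilde w$. The only caveat is that the paper leaves the insertion bookkeeping at the same level of detail you flag as the main obstacle, so your plan matches both the strategy and its acknowledged gaps.
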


We thus see that the structure of the canonical formal family $\bS_\pi=\bS^K_\pi$ in terms of symbols for Kontsevich trees is a consequence of the formula in Remark \ref{rmk:formulatseppi} for the underlying smooth canonical family $\hat S_\pi$. Theorem \ref{thm:main2SK} implies that, when $\a=\a_\pi$ (equiv. when $c=c^B$), the $\tilde w$ coefficients can be taken to be the underlying Kontsevich weights (up to signs and repetition factors), universally for all $\pi$'s.

\begin{remark} (Comparing the weights.)
In \cite{CD}, it was shown that the coefficients $c=c^B$ appearing in eq. \eqref{eq:tayapi} coincide (up to a sign) with the corresponding Kontsevich weight,
$$ c^B_\t = (-1)^{|\t|} W_{\Gamma_\t}, \ \t \in \RT$$
where $\t$ is seen as a network with underlying skeleton having only one vertex. This result implies that these particular Kontsevich weights can be computed by iterated integrals, since the $c^B$'s do by \cite[Thm. 24]{CD}. Proposition \ref{prop:structSK} above then poses a combinatorial challenge: extend the above result of \cite{CD} by finding a universal recursion for the $\tilde w_\rho$'s in terms of the $c$'s and show that they coincide (up to signs and repetitions) with the corresponding Kontsevich weights $W_{\Gamma_\rho}$ when $c=c^B$. The recursion for the $\tilde w$ coefficients can turn out to be an interesting formula for computing the associated Kontsevich weights.
\end{remark}

\section{Relation to Poisson Sigma Model (PSM) action functional}\label{sec:psm}

In this section, the idea is revisit relation \eqref{eq:starSP} where now we think of Kontsevich's $\star_\h$ as being given by the field-theoretic quantization of the PSM following Cattaneo-Felder's work \cite{CFquant}. In this setting, $\star_\h$ is formally expressed as an integral over a "space of fields" and we first, in subsection \ref{subsec:quant}, heuristically characterize which fields yield the semiclassical contribution $S_P$ in terms of an underlying concrete (non-formal) system $(PDE)^\pi$ of PDEs and a (modified) action functional $A'$ (described in detail in subsection \ref{subsec:psmfunc}). The main result of this section, Theorem \ref{thm:SP} below, states that the germ of the canonical generating function $S_\pi$ (see Section \ref{subsec:cangenfun}) is recovered by evaluating a $A'$ on solutions to the system $(PDE)^\pi$. Moreover, we show that such solutions always exist and are classified by certain local groupoid triangles (see Definition \ref{def:triangGpi}). These manipulations provide a non-perturbative functional description of the semiclassical contribution $S_P$ in the PSM; the perturbative description can be found in \cite{CF01bis, CDF}. We also discuss how these constructions establish a concrete relationship between the integration of $(M,\pi)$ and the PSM in the disc (with $3$ insertions at the boundary), generalizing the description of \cite{CFgds} for the PSM on a square.

\subsection{Heuristic motivation from field-theoretic quantization}\label{subsec:quant}
Let us consider a coordinate Poisson manifold $(M,\pi)$.
Following Cattaneo and Felder \cite{CFquant}, Kontsevich's star product \cite{Kontquant} quantizing $(M,\pi)$ can be heuristically expressed as
$$ f_1 \star_\h f_2 (x) = \int_{(X,\eta)\in Z} f_1(X(z_1))f_2(X(z_2))\delta^M_{x}(X(z_3)) e^{\frac{i}{\h}A(X,\eta)}\ \mu_\h(X,\eta).$$
In the above formula, $Z\ni (X,\eta)$ is a (infinite-dimensional) space of maps with $X:D \to M$ and $\eta\in \Omega^1(D,M^*)$, $D$ is the $2$-disk, $\delta^M_x$ is Dirac's delta distribution concentrated at $x\in M$, $z_1,z_2,z_3\in \partial D$ are cyclically oriented points on the boundary, 
$$ A(X,\eta):=\int_D \left[ \eta_j \wedge dX^j + \frac{1}{2} \pi^{jk}(X) \eta_j \wedge \eta_k\right]$$
is the Poisson sigma model (PSM) action functional (\cite{Ikeda,SchStr}) and $\mu_\h(X,\eta)$ is an heuristic 'gauge-invariant' measure on $Z$. (The precise meaning of $\mu_\h$ is studied under the Batalin-Vilkovisky formalism, see \cite{CFquant}.) The definition of $Z$ involves imposing boundary and 'gauge-fixing' conditions:
$$ i_{\partial D}^*\eta = 0, \ d\star \eta = 0,$$
respectively, where $\star$ denotes the Hodge star operator on $D$.

If we heuristically consider the expression given in eq. \eqref{eq:starSP} as an oscillatory integral on $Z$ (see e.g. \cite{GSbook} for the rigorous finite-dimensional theory), and using the Fourier transform expression for Dirac's delta,
$$ \delta^M_{x}(y) = (2\pi\h)^{-dim(M)/2} \int_{p_3\in M^*} e^{-\frac{i}{\h} p_3(y-x)},$$
we heuristically conclude that the function $S_P$ appearing \eqref{eq:starSP} must be given by
$$ S_P(p_1,p_2,x) = A'(X,\eta,p_3),$$
where the \emph{modified PSM action functional}\footnote{This functional could be called the 'PSM action with sources'.} $A'$ is given by
\begin{eqnarray}\label{eq:A'}
A'(X,\eta,p_3)&=& A(X,\eta) + p_{1j}X^j(z_1) + p_{2j}X^j(z_2) - p_{3j}(X^j(z_3)-x).
\end{eqnarray}
and where $(X,\eta,p_3)\in Z\times M^*$ must solve\footnote{In principle, these provide only particular semiclassical contributions consisting of critical points of $A'$ without being restricted to a gauge-fixed subset; as we see in the following subsection, it turns out that these are the contributions that yield $S_\pi$ irrespective of the gauge fixing condition.} the following critical point equations
\begin{eqnarray*}
	d \eta_j &=& - p_{1j} \delta_{z_1} - p_{2j} \delta_{z_2} +  p_{3j}\delta_{z_3} - \frac{1}{2}\partial_{x^j}\pi^{ab}(X) \eta_a \wedge \eta_b \nonumber \\
	d X^j&=& \pi^{kj}(X)\eta_k , \ \ \ X(z_3)= x \nonumber
\end{eqnarray*}
where $\delta_z$ now denotes Dirac's delta distribution supported on $z\in D$. The above system of equations, together with the boundary conditions, suggest a concrete system of PDEs which will be studied in the next subsection. Our main aim is to show that $A'$ restricted to that system's solutions coincides with (the germ of) the canonical generating function  $S_{\pi}$ 
given in Section \ref{subsec:cangenfun} for $(M,\pi)$.

\begin{remark}
	It is interesting to notice that the path integral describing $f_1 \star_\h f_2$ behaves heuristically as an oscillatory integral defining a "Fourier Integral" type of operator $D\subset L^2(M\times M) \to L^2(M)$. In the finite-dimensional case (see \cite{GSbook}) one considers such operators where their integral kernels can be obtained as push-forward along fibrations $p:Z\to M\times M \times M$ of simple kernels defined on $Z$. In the path integral case, $Z$ is formally the space of fields and the fibration is given by evaluation at the marked points $(X,\eta)\mapsto (X(z_1),X(z_2),X(z_3))$. Mimicking the rigorous theory, $S_P$ defines a canonical relation $T^*M\times T^*M \dto T^*M$ (the "wave front of the operator", see also Remark \ref{rmk:lagrels}) which is obtained by taking relative critical points of $A'$ and the role of the "gauge-fixing" condition defining $Z$ is to render these relative critical points non-degenerate.
\end{remark}

\subsection{A system of PDE's and its relation to local symplectic groupoids}\label{subsec:psmfunc}

Let us summarize the conclusion of the previous subsection's heuristic considerations. Let $(M,\pi)$ be a coordinate Poisson manifold. Denoting 
$$D=\{z\in \mathbb{C}: |z|\leq 1 \}\subset \mathbb{C}\equiv \R^2$$ the unit disk, $z_1,z_2,z_3 \in \partial D$ three cyclically oriented points (counterclockwise), we consider a system of PDEs for
$$ X: D \to M, \ \eta \in \Omega^1(D,X^*T^*M) \simeq \Omega^1(D,M^*), \ p_3\in M^*,$$
which is defined by parameters $x\in M, \ p_1,p_2 \in M^*$ by
\begin{eqnarray}
d \eta_j +  \frac{1}{2}\partial_{x^j}\pi^{ab}(X) \eta_a \wedge \eta_b &=&  -p_{1j} \delta_{z_1} - p_{2j} \delta_{z_2} +  p_{3j}\delta_{z_3}  \nonumber \\
d X^j&=& \pi^{ij}(X)\eta_i  \text{ at points in $int(D)$} \nonumber\\
\delta_{z_3}(X)&=& x \nonumber \\
i^*_{\partial D} \eta &=&0  \label{eq:pdes}
\end{eqnarray}
where $\delta_z \in \Omega^2(D)$ denotes the Dirac delta current 2-form supported on $z\in D$. We refer to system \eqref{eq:pdes} as to $(PDE)^\pi_{p_1,p_2,x}$. As we will see shortly, it is clear that $X$ in a solution is typically discontinuous at $z_k$'s and we thus need to specify an extension of the standard delta distributions $\delta_{z_k}$ to such functions. We take the following extension,
\begin{equation}\label{eq:deltaext}
\delta_{z_k}(X) = \underset{\e \to 0}{lim} \frac{1}{\pi}\int_{a(\e)}^{b(\e)} X(z_k + \e e^{i\theta}) \ d\theta, 
\end{equation}
where $\theta \in [a(\e),b(\e)]\subset [0,2\pi]$ defines a small arc in the disc around $z_k$ (and the complex numbers are only used for notational simplicity). Notice that $lim_{\e\to 0}(b(\e)-a(\e)) = \pi$.

In the perturbative computations recalled in the previous subsection, one complements the above system with the \emph{gauge fixing condition} $d \star \eta =0$, for $\star$ denoting the (euclidean) Hodge star on $D$. The effect is to make the solution of the whole system unique, at least perturbatively. In the following subsection, we provide a gauge-fixing independent proof of our main result (and comment on gauge-theoretic interpretation at the end).

\begin{remark}
	The Jacobi identity for $\pi$ ensures an integrability condition for the system \eqref{eq:pdes}. Namely, at a point $z$ in the interior of the disk $D$, the integrability condition $d(dX^j)|_z=0$ is equivalent, by the first and second equations in \eqref{eq:pdes}, to 
	$$\sum_{cyclic \ 1,2,3} \pi^{i_1k}\partial_{x^k}\pi^{i_2i_3}|_{X(z)} \eta_{i_1}\wedge \eta_{i_2} = 0, \forall i_3.$$
\end{remark}
We also consider the functional $A'$ of eq. \eqref{eq:A'} extended according to the definition of the $\delta_{z_k}$'s, so that
\begin{equation}\label{eq:A'reg}
A'(X,\eta,p_3)= \int_D \left[ \eta_j \wedge dX^j + \frac{1}{2} \pi^{jk}(X) \eta_j \wedge \eta_k\right] + p_{1j}\delta_{z_1}(X^j) + p_{2j}\delta_{z_2}(X^j) - p_{3j}(\delta_{z_3}(X^j)-x).
\end{equation}
Observe that solutions to $(PDE)^\pi_{p_1,p_2,x}$ are critical points for $A'$.

The main idea of this section is to recover the canonical generating function $S_\pi$ (see Section \ref{sec:cangenf}) by evaluating $A'$ on the solutions of $(PDE)^\pi_{p_1,p_2,x}$. We will make this precise in the next subsection, but we begin by detailing this construction in the case $\pi$ is a constant Poisson structure.

\begin{example} \textsc{(Solving the PDE's for $\pi$ constant)}\label{ex:conjpiconst}
	Let  $\pi^{ij}(x)=\pi^{ij}$ be a constant Poisson structure on $M$. In this case, the equations for $\eta$ in the system \eqref{eq:pdes}, complemented with the perturbative gauge fixing $d\star \eta=0$, decouple from those of $X$, yielding
	$$  d \eta_j =  -p_{1j} \delta_{z_1} - p_{2j} \delta_{z_2} +  p_{3j}\delta_{z_3} ,\ \ i^*_{\partial D} \eta =0, \ \ d \star \eta = 0.$$
	We first observe that $p_3=p_1+p_2$ is a necessary condition for the solution to exist. Indeed, this follows by integrating over $D$ the first equation above, and using $\int_D d\eta = \int_{\partial D} \eta =0$ by the boundary condition. The solution $\eta$ can be explicitly written in terms of so-called \emph{Kontsevich propagators} for the disc (see e.g. \cite[eq. (14)]{CF08}): for $z_0\in D$,
	\begin{equation}
	\Gamma_{z_0} := \frac{1}{4\pi i}\left[ d_z ln\left( \frac{(z-z_0)(1-z\bar z_0)}{(\bar z - \bar z_0)(1-\bar z z_0)}\right)+ z d\bar z - \bar z dz \right]
	\end{equation}
	(We note that $\Gamma_{z_0}$ is a smooth real valued $1$-form on $D\setminus z_0$; the multiplication in $\mathbb{C}$ is only used as an auxiliary.) When $z_0\in \partial D$, then $i_{\partial D}^*\Gamma_{z_0} = 0$, $d\Gamma_{z_0}= \delta_{z_0}-\frac{1}{\pi}dx\wedge dy$ and $d\star \Gamma_{z_0} = 0$. The second equation for $d\Gamma_{z_0}$ holds in the usual sense of distributions: for a test function $f$ on $D$, denoting $R_\e $ the disc $D$  minus a ball of radius $\e$ around $z_0$, 
	$$  -lim_{\e\to 0}\int_{R_\e} df\wedge\Gamma_{z_0} = lim_{\e\to 0}\left(\int_{\gamma_\e} f\Gamma_{z_0} + \int_{R_\e} f \frac{1}{\pi} dx\wedge dy \right) = f(z_0)- \frac{1}{\pi}\int_D f dxdy,$$
	for $\gamma_\e(\theta)=z_0 + \e e^{i\theta}$ as in \eqref{eq:deltaext}, using $d\Gamma_{z_0} = -\frac{1}{\pi}dx\wedge dy$ on $R_\e$ and by a key estimate (verified by straightforward computation)
	\begin{equation}\label{eq:gammapiconstepsilon} \gamma_\e^*\Gamma_{z_0} = \frac{1}{\pi} d\theta + O(\e).\end{equation}
	The explicit solution for $\eta$ is then
	$$ \eta = -p_1 \Gamma_{z_1} - p_2 \Gamma_{z_2} + (p_1+p_2) \Gamma_{z_3}.$$
	Let us now analyze the behaviour of $X$. Using the second equation in \eqref{eq:pdes} and integrating along a path $\gamma_k$ as in Figure \ref{fig:discpaths}, we get
	\begin{equation} \label{eq:Xjumppiconst} X^j(\gamma_k(1)) - X^j(\gamma_k(0)) = \int_{\gamma_k} dX^j = \pi^{ij}\int_{\gamma_k}\eta_i = \pi^{ij}\int_{R'_k}d\eta_i = -\pi^{ij} p_{ki},\end{equation}
	where, on the third equality, we used Stokes by completing $\gamma_k$ with the piece of the boundary $[\gamma_k(1),\gamma_k(0)]\subset \partial D$ enclosing a region $R_k'$ with  its standard right-hand-rule orientation. This shows that $X|_{\partial D}$ has finite jumps at the $z_k$'s. For $z\in D\setminus \{z_1,z_2,z_3\}$, we have
	$$ X(z) = X(z_0) + \int_{\gamma} \pi^\sharp \eta,$$
	for $\gamma:[0,1]\to \setminus \{z_1,z_2,z_3\}$ any path from $z_0$ to $z$ (since $d\eta=0$ away from the $z_k$'s). In particular, $X|_{\partial D\setminus \{z_1,z_2,z_3\} }$ is locally constant by $i_{\partial D}\eta=0$. Using this expression, we can evaluate our extension \eqref{eq:deltaext} of $\delta_{z_k}$ on $X$, yielding
	$$ \delta_{z_k}(X) = X(z_k^+)+\frac{\sigma_k}{2}\pi^\sharp p_k=\frac{1}{2}( X(z_k^+) + X(z_k^-)).$$
	(The $z^\pm$ notation is with respect to the counter-clockwise orientation on $\partial D$; $\sigma_k=-1$ for $k=1,2$ and $\sigma_3=1$.)
	Finally, the condition $\delta_{z_3}(X)=x$ in the PDE system, together with formula \eqref{eq:Xjumppiconst} for the jumps of $X$ at the $z_k$'s, determine $X$ completely and we obtain the following three values that $X|_{\partial D-\{z_1,z_2,z_3\}}$ takes:
	\begin{equation}\label{eq:Xpiconstboundary} X(z_3^+) = x - \frac{1}{2}\pi^\sharp(p_1+p_2), \ X(z_3^-) = x + \frac{1}{2}\pi^\sharp(p_1+p_2), \ X(z_1^+)=X(z_2^-)=x+\frac{1}{2}\pi^\sharp(p_1) - \frac{1}{2}\pi^\sharp(p_2). \end{equation}
	We can now evaluate the functional $A'$ of eq. \eqref{eq:A'reg} on the solution $(X,\eta)$, yielding
	\begin{eqnarray*}
	A'(X,\eta,p_3) &=& -\frac{1}{2}\int_{\partial D} X^j\eta_j + \frac{1}{2}\int_D X^j d\eta_j +p_{1j}\delta_{z_1}(X^j) + p_{2j} \delta_{z_2}(X^j) \\
	&=&  \frac{1}{2}[(p_1+p_2)(x) + p_{1j}\delta_{z_1}(X^j) + p_{2j} \delta_{z_2}(X^j) ]\\
	&=& (p_1+p_2)(x) + \frac{1}{2} \pi(p_1,p_2)\\
	&=& S_\pi(p_1,p_2,x)
	\end{eqnarray*}
	and thus recovering the canonical generating function $S_\pi$ for $\pi$ constant (see  Examples \ref{ex:S0}, \ref{ex:Spiconstandlin}). Note that this conclusion holds for any other solution $(X,\eta)$ of \eqref{eq:pdes} (not necessarily satisfying the gauge fixing condition $d\star \eta =0$) as long as the estimate \eqref{eq:gammapiconstepsilon} holds.
\end{example}

We observe that, in the above example, $X$ sends the interior of $D$ into the interior of a (euclidean) triangle $T$ in $M$, extending to $\partial D\setminus \{z_1,z_2,z_3\}$ as a locally constant function which takes each arc into the corresponding vertex of $T$ (given in eq. \eqref{eq:Xpiconstboundary}) and such that taking a small arc $\gamma^k_\e$ around $z_k, k=1,2,3$ as in eq. \eqref{eq:deltaext} we obtain a curve from vertex $X(z_k^+)$ to vertex $X(z_k^-)$ which tends (by the estimate \eqref{eq:gammapiconstepsilon}) to the corresponding triangle's line segment edge as $\e \to 0$. This is depicted in Figure \ref{fig:discpaths}(b). The triangle $T\subset M$ lies inside the symplectic leaf of $(M,\pi)$ given by an affine subspace containing $x$. For later reference, we apply this construction to $M=\R^2$ endowed with canonical symplectic $\pi$, choosing $(p_1,p_2,x)$ so as to obtain a map into the standard $2$-simplex:
\begin{equation}\label{eq:defc2simplex} c: D\setminus \{z_1,z_2,z_3\} \to \Delta_2 = \{(t,s)\in \R^2: t+s\leq 1, t,s \geq 0 \},\end{equation}
with $c(z_2^+)=(1,0)$, $c(z_1^+)=(0,0)$ and $c(z_3^+)=(0,1)$.

\newpage

\begin{figure}
		\hskip-1cm\includegraphics[scale=0.5]{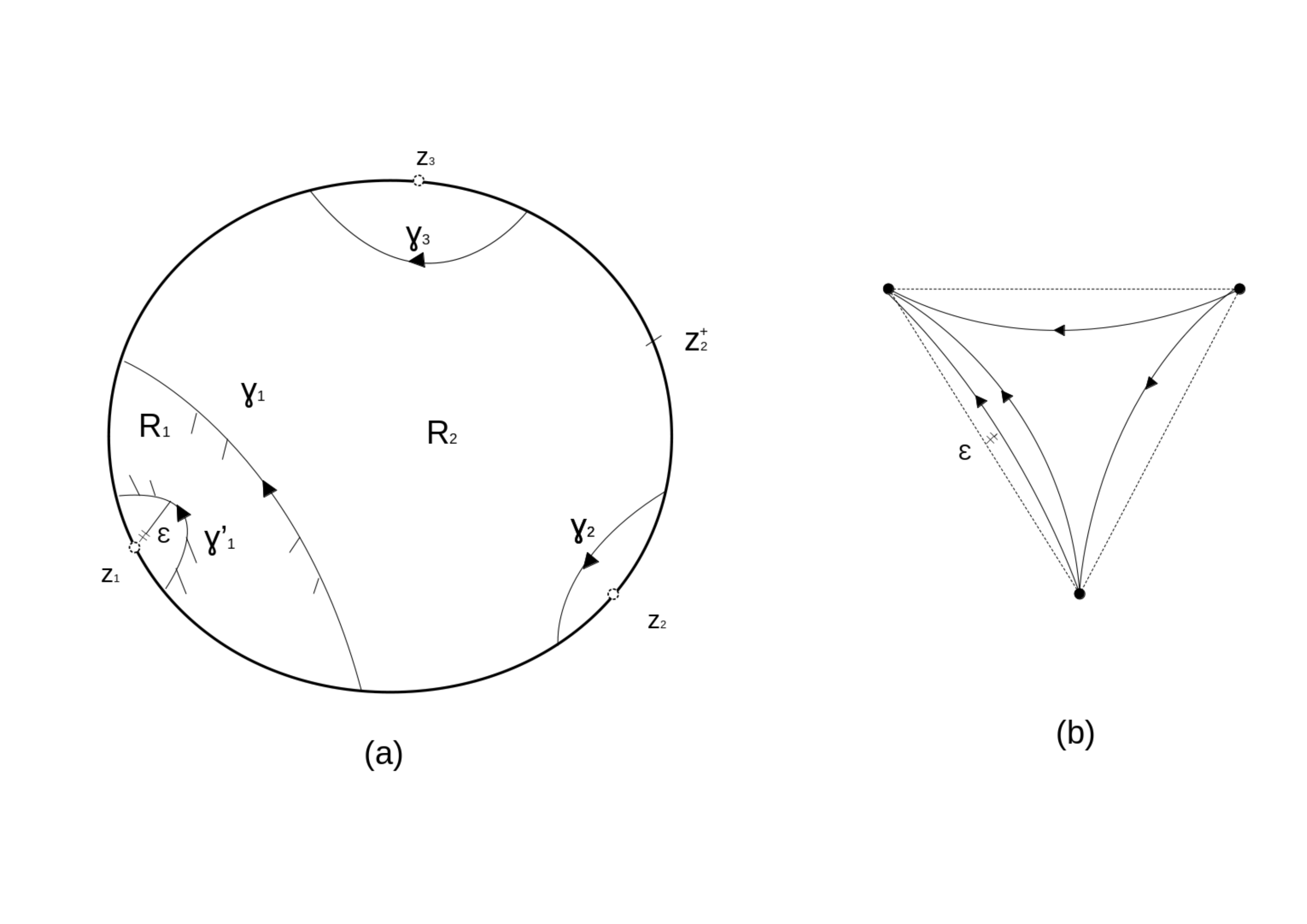} 
	\caption{\label{fig:discpaths} (a): the disc with three marked points in the boundary. Paths $\gamma_k, \ k=1,2,3$ are chosen, as well as an alternative path $\gamma_1'$, and regions $R_1, \ R_2$ are highlighted. (b): an image triangle through $X$ and the corresponding paths.}
\end{figure}
The picture in the above example actually generalizes to general coordinate $(M,\pi)$, yielding non-linear triangles in an integrating local groupoid. We will make this precise in the following subsection, but we anticipate some general features here.

If $\gamma:[0,1] \to D\setminus \{z_1,z_2,z_3\}$ is a smooth path, the composition
$$ a: = (X,\eta)\circ T\gamma: T[0,1]\to T^*_\pi M$$
defines a Lie algebroid morphism, called \emph{cotangent path}. (This condition is equivalent to the pullback of the second equation in \eqref{eq:pdes}, see \cite{CFlie,CFgds}.) If $G$ is any (local) symplectic groupoid integrating $(M,\pi)$, such an algebroid morphism can be integrated (using Lie's second theorem, see e.g. \cite{CMS1} for the local Lie case where $a$ must be "small enough" and next subsection) to a unique path in $G$ defined by
$$ \tilde g: [0,1]\to P_G, \ -\omega_G^\flat(DR_{{\tilde g}^{-1}}d\tilde g ) = a , \ \tilde g(0)=1_{X(\gamma(0))}.$$
If we apply this construction to the paths $\gamma_k$ in Figure \ref{fig:discpaths} we obtain three paths $\tilde g_{k}(t), \ k=1,2,3$. From this picture is clear that the locally constant $X|_{\partial D}$ is actually discontinuous as in Example \ref{ex:conjpiconst}. The interesting part is that, since the pullback along a parameterization $[0,1]\times [0,1]\to R_1$ of the enclosed region between different choices of the $\gamma_k$'s (see Figure \ref{fig:discpaths}) yields a \emph{cotangent homotopy} by definition (the boundary condition on $\eta$ is a key point here), it follows (see \cite{CFlie}) that the endpoints $g_k:= \tilde g_k(t=1)$ are independent of the path taken "around" each $z_k$. Moreover, thinking of the restriction of $(X,\eta)$ to the region $R_2$ of Figure \ref{fig:discpaths}, we get that (see the next subsection and \cite{CFlie})
$$ g_3 = m_G(g_1,g_2).$$
We thus see  that the solutions of our PDE's provide a particular parameterization of the $2$-simplices in $G$ which describe groupoid multiplication. As in Example \ref{ex:conjpiconst}, the idea is that, in this parameterization, the groupoid elements being multiplied are determined by the singularities of the solutions $(X,\eta)$ near the $z_k$'s, which are in turn controlled by the $p_k$'s. In the next subsection, we make these remarks precise. 

\begin{remark}\label{rmk:wegd}
This picture holds, in particular, for $G=W(T^*_\pi M)\rightrightarrows M$ the Weinstein groupoid associated to $(M,\pi)$ (see \cite{CFgds,CFlie}). In that case, elements are cotangent homotopy classes and multiplication is given by concatenation of paths. Its relation to the PSM with source $[0,1]\times [0,1]$ was established in \cite{CFgds}. We now see that the relation to the PSM on the disk with $3$-boundary insertions is through the above parameterized $2$-simplices in the graph of multiplication; see more details below.
\end{remark}

\begin{remark} (Special solutions of the PDE's: identities and inverses.)
	For arbitrary $\pi$, $X=x$ constant and $\eta=0=p_3$ is a solution of $(PDE)^\pi_{0,0,x}$. More generally, when $p_1=p$ and $p_2=0$ (or viceversa) then $p_3=p$ and $\eta = -p \Gamma_{z_1} + p \Gamma_{z_3}$ provides a solution. This corresponds to $(x,0)$ being an identity in the underlying groupoid. Similarly, when $p_1=-p_2=p$, then $p_3=0$ and $\eta = -p\Gamma_{z_1}+p\Gamma_{z_2}$ determine a solution corresponding to the inverse being $(x,p)\mapsto (x,-p)$. The source and target maps of $(x,p)$ on the underlying local groupoid can be computed by $\delta_{z_2}(X)$ and $\delta_{z_1}(X)$, respectively, for $X$ a solution corresponding to $p_1=p, \ p_2=0$ and $p_1=0, \ p_2=p$, respectively.
\end{remark}

\subsection{Recovering $S_\pi$ from the PSM action functional}

In this subsection, we prove that the canonical generating function $S_\pi(p_1,p_2,x)$ can be recovered by evaluating $A'$ on solutions of $(PDE)^\pi_{p_1,p_2,x}$, as well as an existence and classification result for the latter. To that end, we first specify the class of solutions that we will be working on. The focus is on smooth maps defined on the punctured disc
$$ D_* : = D\setminus \{z_1,z_2,z_3\}$$
with specific singular behavior near the $z_k$'s.

\begin{definition}\label{def:strongsol}
	A (strong) {\bf  solution of $(PDE)^\pi_{p_1,p_2,x}$} consists of smooth maps $X:D_* \to M, \ \eta \in \Omega^1(D_*, M^*)$ and $p_3\in M^*$ satisfying
	\begin{eqnarray}
	(d \eta_j +  \frac{1}{2}\partial_{x^j}\pi^{ab}(X) \eta_a \wedge \eta_b)|_z  = 0\ , \ d X^j|_z=\pi^{ij}(X)\eta_i|_z, \ \forall z\in int(D), \nonumber	
	\\ i_{\partial D}^*\eta|_{z\in D_*} =0, \ z_{k,\e}^*\eta = \sigma_k p_k \frac{d\theta}{\pi} + O(\e), k=1,2,3 \text{ and }
	 \ \underset{\e \to 0}{lim} \int_{a(\e)}^{b(\e)} X(z_{3,\e}(1,\theta)) \ \frac{d\theta}{\pi}  = x, \label{eq:strongsol}
	\end{eqnarray}
	where $$ z_{k,\e}(\rho,\theta) = z_k + \e\rho e^{i\theta} \in D_*$$
	are polar coordinates around $z_k$'s and $\sigma_k$ are signs given by $\sigma_1=\sigma_2=-1=-\sigma_3$. 
	
	A (normalized) {\bf family of solutions $(X_\cdot,\eta^\cdot,p_3)$ for $(PDE)^\pi$} consists of a smooth map
	$$ U\times TD_* \to T^*M, (p_1,p_2,x,z,\dot z) \mapsto (X_{p_1,p_2,x}(z),i_{\dot z}\eta^{p_1,p_2,x}|_z,p_3(p_1,p_2,x)) $$
	defining a strong solution of $(PDE)^\pi_{p_1,p_2,x}$ for each $(p_1,p_2,x)$, where $U\subset M^*\times M^* \times M$ is an open neighborhood of $0\times 0 \times M$ and such that ("normalization condition") $X_{0,0,x}=x$ is the constant map and $\eta^{0,0,x}=0=p_3(0,0,x)$.
\end{definition}

Since we do not consider other types of solutions, we often drop the "strong" and "normalized" from the notation. Notice that, as in Example \ref{ex:conjpiconst}, the differential of $d\theta /\pi$ as a distribution yields the delta supported at the underlying $z_k\in \partial D$, so the first and forth equations in \eqref{eq:strongsol} above imply the first equation in \eqref{eq:pdes}. Also notice that the last equation for $X$ is $\delta_{z_3}(X)=x$ with definition \eqref{eq:deltaext}. Hence, a strong solution defines a distributional solution of the original system \eqref{eq:pdes}.

The main result of this section states that evaluating $A'$ on families of solutions yields a generating function for $G_\pi$, that this function agrees with the (germ of the) canonical one $S_\pi$ and that such families always exist and are classified by certain triagles in $G_\pi$.

\begin{theorem} \label{thm:SP}
	Let $(M,\pi)$ be a coordinate Poisson manifold, $A'$ be the (modified) PSM functional (see eqs. \eqref{eq:A'reg} and \eqref{eq:deltaext}) and  $G_\pi$ be the canonical local symplectic groupoid integrating $(M,\pi)$ with generating function $S_\pi$ (see Section \ref{subsec:cangenfun}). Then,
	\begin{enumerate}
		 \item families of solutions for $(PDE)^\pi$ exist and are their germs around $0\times 0 \times M\subset M^*\times M^* \times M$ are classified by families of $G_\pi$-triangles (see Definition \ref{def:triangGpi} below);
		 
		\item any family of solutions $(X_\cdot,\eta^\cdot,p_3)$ for $(PDE)^\pi$ defines a function
		$$ S_P: M^*\times M^* \times M\dtod{0\times 0\times M} \R,  \ S_P(p_1,p_2,x):= A'(X_{p_1,p_2,x},\eta^{p_1,p_2,x},p_3(p_1,p_2,x)),$$
		 which is a coordinate generating function for the canonical local symplectic groupoid $G_\pi$ (see Section \ref{subsec:canG}) and satisfies $S_P(0,0,x)=0$.
	\end{enumerate}
Hence, by Theorem \ref{thm:main1}, the germ of any $S_P$ obtained in this way coincides with that of $S_\pi$,$$ S_P =_{0\times 0 \times M} S_\pi.$$
\end{theorem}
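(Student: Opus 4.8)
The plan is to prove the two enumerated assertions and then read off the concluding identity from the uniqueness in Theorem \ref{thm:main1}, rather than by evaluating $A'$ head on. The engine of the whole argument is the Lie-theoretic reading of a solution already previewed in Section \ref{subsec:psmfunc}: away from the punctures, the interior equations in \eqref{eq:strongsol} say exactly that the pullback $(X,\eta)\circ T\gamma$ along any smooth path $\gamma$ in $D_*$ is a cotangent ($A$-)path for $T^*_\pi M$, and the Jacobi/integrability identity makes this compatible with homotopies of $\gamma$.

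First I would establish assertion (1). Given a strong solution, I integrate the cotangent paths $\tilde g_k$ around the marked points in $G_\pi$ by Lie's second theorem for the local groupoid (cf. \cite{CMS1,CFlie}). The boundary condition $i^*_{\partial D}\eta=0$ is what makes the region swept between two choices of path around a given $z_k$ a \emph{cotangent homotopy}, so the endpoints $g_k=\tilde g_k(1)$ are path-independent; the region $R_2$ of Figure \ref{fig:discpaths} then forces the multiplicative relation $g_3=m_{G_\pi}(g_1,g_2)$, so that $(g_1,g_2,g_3)$ is a $G_\pi$-triangle (Definition \ref{def:triangGpi}). Moreover the prescribed profile $z_{k,\e}^*\eta=\sigma_k p_k\,d\theta/\pi+O(\e)$ forces the covector part of $g_k\in P_{G_\pi}=T^*M$ to equal $p_k$ (up to the sign $\sigma_k$ coming from the orientation of the arc), while the arc-averages $\delta_{z_k}(X)$ give its base point; this is precisely what the constant case of Example \ref{ex:conjpiconst} verifies by hand. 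Passing to germs around $0\times 0\times M$ yields the classification map onto families of $G_\pi$-triangles. For the converse I would, given such a triangle, represent $g_1,g_2$ by $A$-paths carrying the angular singularity at $z_1,z_2$, use $g_3=m(g_1,g_2)$ to glue them compatibly across $D_*$ and interpolate in the interior, producing a strong solution; smoothness in $(p_1,p_2,x)$ together with the normalization at $0\times 0\times M$ upgrades this to a family of solutions.

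For assertion (2), I would first note $S_P(0,0,x)=A'(x,0,0)=0$ directly from the normalization $X_{0,0,x}\equiv x$, $\eta^{0,0,x}=0$, $p_3(0,0,x)=0$ in \eqref{eq:A'reg}. To identify $S_P$ as a generating function I differentiate $S_P(p_1,p_2,x)=A'(X_{\cdot},\eta^{\cdot},p_3(\cdot))$ in the external parameters. Since the family consists of critical points of $A'$ under field variations of $(X,\eta,p_3)$, the field-direction contributions cancel by the first-variation (envelope) argument, leaving only the explicit parameter dependence of \eqref{eq:A'reg}:
\begin{equation*}
\partial_{p_{1}}S_P=\delta_{z_1}(X^{\cdot}),\qquad \partial_{p_{2}}S_P=\delta_{z_2}(X^{\cdot}),\qquad \partial_{x}S_P=p_3 .
\end{equation*}
Combined with the classification of (1), which identifies $\big((\delta_{z_1}(X),p_1),(\delta_{z_2}(X),p_2),(x,p_3)\big)$ with the $G_\pi$-triangle $(g_1,g_2,g_3=m(g_1,g_2))$, this is exactly the generating condition \eqref{eq:condS} for $G_\pi$; hence $S_P$ is a coordinate generating function for $G_\pi$ vanishing on $X_0$. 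The concluding identity is then immediate: by the uniqueness in Theorem \ref{thm:main1}, the germ of a coordinate generating function for $G_\pi$ with $S(0,0,x)=0$ is unique, so $S_P=_{0\times 0\times M}S_\pi$.

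I expect the main obstacle to be assertion (1): the existence and regularity of solutions on the punctured disk with the prescribed $d\theta$-type singularities, and the rigorous $A$-path integration and cotangent-homotopy invariance needed to produce and classify the $G_\pi$-triangles — essentially adapting the Cattaneo–Felder disk constructions \cite{CFlie,CFgds} to the case of three boundary insertions. By comparison the envelope computation in (2) is formal once smoothness of the family is in hand; its only delicate point is that the puncture singularities keep the boundary terms appearing in the first variation of $A'$ under control, so that the critical-point identity genuinely annihilates the variations $\partial_{p_1}(X,\eta,p_3)$, etc.
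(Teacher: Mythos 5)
Your route is the paper's: integrate $(X,\eta)$ over $D_*$ to a map $g$ into an $\a_\pi$-fibre by Lie's second theorem, use $i_{\partial D}^*\eta=0$ and cotangent-homotopy invariance to extract a composable triple with $g_3=m_\pi(g_1,g_2)$, compute $\partial_{p_1}S_P=\delta_{z_1}(X)$, $\partial_{p_2}S_P=\delta_{z_2}(X)$, $\partial_xS_P=p_3$ by the envelope/critical-point argument, and close with the uniqueness of Theorem \ref{thm:main1}. Two places where your sketch is looser than the actual argument deserve attention. First, the classification in (1): a $G_\pi$-triangle in Definition \ref{def:triangGpi} is the full smooth map $\hat g:\Delta_2\to T^*M$ with prescribed edge behaviour, not the boundary triple $(g_1,g_2,g_3)$; distinct interior fillings with the same triple yield distinct solutions of $(PDE)^\pi_{p_1,p_2,x}$ (the paper reads these as different gauge fixings), so classifying by triples would not give a bijection. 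The correspondence actually used is the right-logarithmic derivative $(X,\eta)=-\omega_c^\flat(R_{(\hat g\circ c)^{-1}}d(\hat g\circ c))$, which turns \emph{any} triangle into a strong solution with no ad hoc ``interpolation in the interior'' --- the interpolation must itself be a Lie algebroid morphism, which is precisely what your gluing step would still have to establish --- and existence then reduces to exhibiting one explicit family of triangles, which the paper constructs from the flows $\ham^{\tilde p\,\b_\pi}_u$. Second, the identifications $r(g_k)=p_k$ and $q(g_k)=\delta_{z_k}(X)$, and hence $p_3=\partial_xS_\pi$, are not forced by the angular profile of $\eta$ alone: they rest on the special properties \eqref{eq:propsalphaini}--\eqref{eq:rhamu} of the Karasev realization and on the defining integral \eqref{eq:alphapi}, i.e.\ they are specific to $G_\pi$ and should be invoked explicitly. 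Finally, in (2) the generating condition \eqref{eq:condS} is an equality of germs of sets, so one must also note that every composable pair near the identities is realized by some $(p_1,p_2,x)$; this follows from the construction but is worth stating. None of this changes your route, but these points are needed to make (1), and through it (2), precise.
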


We provide a proof of this theorem in the main text below.
We first notice that the normalization condition on the definition of a family of solutions implies directly
$$ S_P(0,0,x) = 0.$$
Next, we begin with the proof of the existence and classification result, Theorem \ref{thm:SP}(1.), starting with the classification part.

Let $(X,\eta,p_3)$ be a (strong) family of solutions of $(PDE)^\pi$ as in Definition \ref{def:strongsol}. Notice that the first two equations in that definition imply that, for each $(p_1,p_2,x)$ fixed, the corresponding $(X,\eta)$ defines a Lie algebroid morphism $T D_* \to T^*_\pi M$. Hence, fixing any local symplectic groupoid $G$ integrating $(M,\pi)$, by Lie's second theorem for local Lie groupoids (see e.g. \cite{CMS1}) we get:  for $p_1,p_2$ small enough (relative to the domain of the structure maps of $G$), we can integrate this Lie algebroid morphism to a unique smooth map satisfying 
$$ g: D_* \to P_G, \ -\omega_G^\flat(DR_{g^{-1}}dg) = (X,\eta), \ g(z_2^+) = 1_{X(z_2^+)}.$$
(The last base point condition at $z_2^+$ is chosen for later convenience; also notice that $(X,\eta)$ is factored as the total space of the algebroid is $T^*_\pi M= M\times M^*$ is.) From this definition, it follows that the image of $g$ lies in $\a^{-1}(X(z_2^+))$ and that $ \beta\circ g = X$.
Moreover, by the boundary condition on $\eta$, it follows that $g|_{\partial D \setminus \{z_1,z_2,z_3 \} }$ is a locally constant map.

We apply the above construction with $G=G_\pi$ the canonical local symplectic groupoid of Section \ref{subsec:canG} and obtain a smooth family of maps
$$ (M^*\times M^* \times M) \times D_* \dtod{0\times 0 \times M \times D_* } T^*M, \ (p_1,p_2,x, z)\mapsto g_{p_1,p_2,x}(z).$$
We now analyze how the singular behavior of $(X,\eta)$ near the $z_k$'s determines the behavior of $g$ near those points.

\begin{lemma}\label{lem:limeg}
	With the notations above,
	$$ lim_{\e\to 0} g(z_k + \e e^{i\theta}) = \ham^{-\sigma_k p_k\beta_\pi}_{u_k(\theta)}(g(z_k^+)), \ k=1,2,3,$$
	where $\ham^H_u$ denotes the hamiltonian flow on $(T^*M,\omega_c)$ (see Section \ref{sec:lsg}),  $u_k(\theta) = \frac{1}{\pi} (\theta - lim_{\e\to 0}a_k(\e))$ and $[a_k(\e),b_k(\e)]\subset [0,2\pi]$ is the domain of $\theta$ so that $z_k + \e e^{i\theta} \in D$.
\end{lemma}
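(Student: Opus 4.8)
The plan is to restrict the defining right-invariant equation for $g$ to the small circular arcs around each $z_k$ and to recognize the resulting $\theta$-equation as the one generating a hamiltonian flow of a $\beta_\pi$-pullback, exactly in the spirit of Remark \ref{rmk:rightinvarODE}. First I would set $\gamma_\e(\theta) = z_k + \e e^{i\theta}$ for $\theta \in [a_k(\e), b_k(\e)]$ and $h_\e(\theta) := g(\gamma_\e(\theta))$, and pull the algebroid morphism $(X,\eta)$ back along $\gamma_\e$. Since $g$ integrates $(X,\eta)$ through $-\omega_c^\flat(DR_{g^{-1}}dg)=(X,\eta)$ and satisfies $\beta_\pi\circ g = X$, the curve $h_\e$ solves $$-\omega_c^\flat\left(DR_{h_\e^{-1}}\tfrac{d}{d\theta}h_\e\right) = \left(\beta_\pi(h_\e(\theta)),\ \eta|_{\gamma_\e(\theta)}(\dot\gamma_\e(\theta))\right).$$ Because $\dot\gamma_\e(\theta)$ is exactly the $\theta$-direction of the polar map $z_{k,\e}$ at radius $\rho=1$, the singular-behaviour clause $z_{k,\e}^*\eta = \sigma_k p_k\,\tfrac{d\theta}{\pi}+O(\e)$ in the definition of a strong solution (eq. \eqref{eq:strongsol}) says precisely that the covector on the right equals $\sigma_k p_k/\pi + O(\e)$, uniformly in $\theta$.

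By Remark \ref{rmk:rightinvarODE}, the right-invariant vector field on $(T^*M,\omega_c)$ attached to a constant algebroid element $p\in M^*$ is the hamiltonian vector field of $-p\beta_\pi$, so the displayed equation is the (time-dependent) ODE $\tfrac{d}{d\theta}h_\e = X^{-a_\e(\theta)\beta_\pi}(h_\e)$ with $a_\e(\theta):=\eta|_{\gamma_\e(\theta)}(\dot\gamma_\e(\theta)) \to \sigma_k p_k/\pi$ uniformly. I would impose the initial condition at the endpoint $\theta = a_k(\e)$: here $\gamma_\e(a_k(\e))\in\partial D$ sits on the boundary arc immediately on the positive side of $z_k$, and since $g|_{D_*\cap \partial D}$ is locally constant (recall the remarks preceding the lemma), this forces $h_\e(a_k(\e)) = g(z_k^+)$ exactly for all small $\e$.

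Finally I would let $\e\to 0$. The endpoints converge, $a_k(\e)\to \lim_{\e\to0}a_k(\e)$, the generators $X^{-a_\e(\theta)\beta_\pi}$ converge uniformly on compact $\theta$-subintervals to the constant $X^{-(\sigma_k p_k/\pi)\beta_\pi}$, and the base points are the constant $g(z_k^+)$; continuous dependence of ODE solutions on the time-dependent field and on the initial condition then gives, for each interior $\theta$, $$\lim_{\e\to0}h_\e(\theta) = \ham^{-(\sigma_k p_k/\pi)\beta_\pi}_{\theta-\lim_{\e\to0}a_k(\e)}(g(z_k^+)).$$ Using the homogeneity $\ham^{cH}_s=\ham^{H}_{cs}$ with $c=1/\pi$ rewrites the right-hand side as $\ham^{-\sigma_k p_k\beta_\pi}_{u_k(\theta)}(g(z_k^+))$ with $u_k(\theta)=\tfrac1\pi(\theta-\lim_{\e\to0}a_k(\e))$, which is the asserted formula.

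The step I expect to be the main obstacle is the rigorous $\e\to 0$ passage: one must control the $O(\e)$ error in $a_\e(\theta)$ uniformly over the moving interval $[a_k(\e),b_k(\e)]$, and guarantee that the integrated $g$ extends smoothly up to $D_*\cap\partial D$ so that the value $g(z_k^+)$ is genuinely attained along $\gamma_\e(a_k(\e))$. A secondary point requiring a short justification is that the right-invariant flow coincides with the hamiltonian flow of $-p\beta_\pi$ from an arbitrary (non-unit) starting point, not only from the unit treated in Remark \ref{rmk:rightinvarODE}; this follows because $X^{-p\beta_\pi}$ is right-invariant and its flow on $(T^*M,\omega_c)$ is defined independently of the base point.
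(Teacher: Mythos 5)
Your proposal is correct and follows essentially the same route as the paper: it extracts the $\theta$-ODE for $g$ along the arcs $z_k+\e e^{i\theta}$ from the singular-behaviour clause of a strong solution, identifies it via Remark \ref{rmk:rightinvarODE} with the hamiltonian flow of $-\sigma_k p_k\b_\pi/\pi$, and passes to the limit by continuous dependence on parameters. The only cosmetic difference is that the paper right-translates the curve by $g(z_k^+)^{-1}$ so that it starts at a unit and Remark \ref{rmk:rightinvarODE} applies verbatim, whereas you keep the base point at $g(z_k^+)$ and invoke right-invariance of $X^{-p\b_\pi}$ to justify the flow formula from a non-unit starting point — the same fact in either packaging.
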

\begin{proof}[of the Lemma]
 Denote $\gamma_{k,\e}(\theta)=z_k + \e e^{i\theta}$ the curves in $D$ with $\theta \in [a_k(\e),b_k(\e)]$ as in the statement. On the one hand, by the definition of strong solution $(X,\eta)$ we have 
 $$ \gamma_{k,\e}^*\eta = \sigma_k p_k \frac{d\theta}{\pi} + O(\e).$$
 Setting $g_{k,\e}(\theta):=g(\gamma_{k,\e}(\theta))g(z_k^+)^{-1}$, the definition of $g$ implies that $g_{k,\e}$ are solutions of the ODEs
 $$- \omega_c^\flat(TR_{g_{k,\e}^{-1}}\frac{d}{d\theta} g_{k,\e}) = (\b_\pi(g_{k,\e}), \frac{\sigma_k}{\pi} p_k  + O(\e)), \ g_{k,\e}(a_k(\e))=(X(z_k^+),0).$$
 On the other hand, in Remark \ref{rmk:rightinvarODE} we observed that the solution of the ODE for a curve $t\mapsto g(t)\in \a_\pi^{-1}(y)$,
 $$ -\omega_c^\flat(TR_{g^{-1}}\frac{d}{dt}g)) = (\b_\pi(g),p) , \ g(0)=(y,0) \text{ is given by } g(t)=\ham_{t}^{-p\b_\pi}(y,0).$$
 The Lemma thus follows directly by taking $\e \to 0$ and continuity of solutions with respect to parameters.
\end{proof}

We now observe some direct a consequences of the Lemma, using the fact that the flows $\ham_u^{p\b_\pi}$ are right invariant in $G_\pi$ (see Section \ref{sec:lsg}). First, the groupoid elements $g_1,g_2,g_3$ in $G_\pi$ defined by integrating the cotangent paths $(X,\eta)\circ T\gamma_k:T [0,1]\to T^*_\pi M$, with $\gamma_k$ as in Figure \ref{fig:discpaths}, can be computed along the paths $z_{k,\e}(1,\theta)$ (notice the $z_{3,\e}$ has the opposite orientation of $\gamma_3$, and $g_3=g(z_3^+)$) and, since they are independent of $\e$ as discussed in the previous subsection, the Lemma allows us to compute them by taking $\e\to 0$. We arrive to the following identities
\begin{eqnarray} g_2 = \ham^{p_2\b_\pi}_{u=1}(X(z_2^+),0)=g(z_1^+), \ g_1 =  \ham^{p_1\b_\pi}_{u=1}(\b_\pi(g_2),0)=g(z_3^+)g(z_1^+)^{-1}, \nonumber \\  \ham_{u=1}^{-p_3\b_\pi}(g(z_3^+)) = (X(z_2^+),0).\label{eq:gksfromXeta}
\end{eqnarray}
The above identities involving the map $g(z)$ follow from its definition, the fact that algebroid homotopies integrate to homotopies with fixed endpoints (see \cite{CFlie}) and uniqueness of solutions together with right invariance of the underlying differential equations. In particular, $g_3=m_\pi(g_1,g_2)$ as observed from general principles in the previous subsection and, by $\ham^{-H}_u = \ham^{H}_{-u}$, we also conclude
$$ g_3= \ham_{u=1}^{p_3\b_\pi}(X(z_2^+),0).$$
We further claim that we can compute the components of $g_k\in M\times M^*$ as
\begin{equation}\label{eq:gkprojs}
g_k = (\delta_{z_k}(X), p_k), k = 1,2,3,
\end{equation}
where $\delta_{z_k}(X)$ is defined as an integral in eq. \eqref{eq:deltaext}.
To get the $r$-projections of the $g_k$'s, using the first property of $\a_\pi$ in eq. \eqref{eq:rhamu} (recalling $\b_\pi(x,p)=\a_\pi(x,-p)$), we arrive from \eqref{eq:gksfromXeta} to
\begin{equation*} r(g_k) = p_k, \ k=1,2,3,  \end{equation*}
for each $(p_1,p_2,x)$ small enough. To understand the projections $qg_k$, following the proof of the Lemma and using the fact that $\b_\pi$ is an anti-Poisson map, we first obtain that
\begin{equation*}
lim_{\e \to 0} X(z_k+\e e^{i\theta}) = \varphi_{\pi,p=-\sigma_kp_k}^{u=\theta/\pi}(X(z_k^+)),
\end{equation*}
recalling that $\varphi^u_{\pi,p}$ denotes the flow of eq. \eqref{eq:Ppeq}.
Inserting this limit into the integral defining the delta's and using $X(z_k^+)= \a_\pi(g_k^{-\sigma_k})$ by definition, we get (recall that the inverse is given by $(x,p)\mapsto (x,-p)$ so that the $q$-projection remains constant)
$$ \delta_{z_k}(X)=\underset{\e \to 0}{lim}\int_{a_k(\e)}^{b_k(\e)} X(z_k + \e e^{i\theta}) \  \frac{d\theta}{\pi} = \int_0^1 \varphi^t_{\pi, p=-\sigma_k p_k}(\a_\pi(qg_k, -\sigma_k p_k)) = qg_k$$
where the last equality follows from the defining property of $\a_\pi$, eq. \eqref{eq:alphapi}. This proves \eqref{eq:gkprojs}.

In particular, since $S_\pi$ is a generating function for $G_\pi$ and using $\delta_{z_3}(X)=x$ from \eqref{eq:pdes}, it follows that $p_3 = \partial_x S_\pi (p_1,p_2,x)$. We thus obtained the following result: if $(X,\eta,p_3)$ is a family of solutions of $(PDE)^\pi$, then 
\begin{equation}\label{eq:p3neccond} p_3 =_{0\times 0 \times M} \partial_x S_\pi.\end{equation}

To continue with the characterization of families of solutions, we next observe that the above lemma implies that $g$ actually spans a smooth triangle in the $\a_\pi$-fiber. Taking the map $c:D_*\to \Delta_2$ into the standard $2$-simplex defined in \eqref{eq:defc2simplex}, we see that
$$ \hat g\equiv \hat g_{X,\eta} := g \circ c^{-1}: int(\Delta_2) \to T^*M$$
extends by the above Lemma to a smooth map defined on $\Delta_2$ (recall from Example \ref{ex:conjpiconst} that $c(z_k + \e e^{i\theta})$ is an $\e$-family of curves approaching the corresponding edge of $\Delta_2$ as $\e\to 0$). This motivates the following definition.

\begin{definition}\label{def:triangGpi}
	A {\bf $G_\pi$-triangle generated by $(p_1,p_2,x)$} is a smooth map
	$$ \hat g: \Delta_2 \to T^*M$$
	satisfying:
	\begin{eqnarray}
	r\hat g(1,0) &=& 0 \text{ so $\hat g(1,0)=(y,0)$ is an identity,} \nonumber \\
	\hat g(1-u,0) &=& \ham_{u}^{p_2\b_\pi}(y,0), \	\hat g(0,u) = \ham_{u}^{p_1\b_\pi} \ham_{t=1}^{p_2\b_\pi}(y,0), \
	\hat g(1-u,u) = \ham_u^{p_3\b_\pi} (y,0), \nonumber \\
	q \hat g(0,1) &=& x, \label{eq:triangle}
	\end{eqnarray} 
	where $p_3 = \partial_x S_\pi (p_1,p_2,x)$. A (normalized) {\bf family of $G_\pi$-triangles} is a smooth map
	$$ (M^*\times M^* \times M) \times \Delta_2 \dtod{0\times 0\times M \times \Delta_2} T^*M, (p_1,p_2,x, c) \mapsto \hat g_{p_1,p_2,x}(c)$$
	which defines a $G_\pi$-triangle for each generator $(p_1,p_2,x)$ and such that $\hat g_{0,0,x} = (x,0)$ is the constant map.
\end{definition}

Let us verify that $\hat g \equiv \hat g_{X,\eta}$ as defined above from a family of solutions $(X,\eta,p_3)$ actually defines a family of $G_\pi$-triangles. We already observed that $\hat g$ defines a smooth map $\Delta_2\to T^*M$. Moreover, for each $(p_1,p_2,x)$ fixed, the conditions \eqref{eq:triangle} follow directly from Lemma \ref{lem:limeg} and the discussion after it (in particular, using \eqref{eq:gkprojs} for verifying $q\hat g(0,1)=x$). We thus conclude the following: given a family of solutions $(X,\eta,p_3)$, the associated $\hat g_{X,\eta}$ defines a family of $G_\pi$-triangles. 

Conversely, if $\hat g$ is a family of $G_\pi$-triangles, we define a family of maps $(X,\eta,p_3)$ by
$$ (X,\eta)_{p_1,p_2,x}:= -\omega_c^\flat(R_{(\hat g_{p_1,p_2,x}\circ c)^{-1}}d(\hat g_{p_1,p_2,x}\circ c) ), \ \ p_3 := \partial_x S_\pi(p_1,p_2,x).$$
By  the definition of $c: D_*\to \Delta_2$, it follows directly that $(X,\eta,p_3)$ defines a smooth family of maps. Fixing small $(p_1,p_2,x)$, the first line of PDE's in \eqref{eq:strongsol} follows directly since $(X,\eta)$ is an algebroid morphism by construction. The second line of 'boundary' conditions in \eqref{eq:strongsol} also follows directly from the defining properties of $\hat g$ and of the map $c$ (for verifying $\delta_{z_3}(X)=x$ one uses the argument underlying eq. \eqref{eq:gkprojs}). We have then proved the following:
\begin{proposition}
	With the notations above,  the assignment $\hat g \mapsto (X,\eta,p_3)$ defines a one to one correspondence between germs of families of $G_\pi$-triangles and families of solutions of $(PDE)^\pi$ around $0\times 0\times M \subset M^*\times M^* \times M$.
\end{proposition}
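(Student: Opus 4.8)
The plan is to prove that the two constructions introduced just above are mutually inverse at the level of germs of families; since the text has already checked that each construction lands in the correct class (a family of $G_\pi$-triangles is produced from a family of solutions, and vice versa), only the inverse relations remain. Write $\Phi$ for the stated assignment $\hat g \mapsto (X,\eta,p_3)$ sending a family of $G_\pi$-triangles to $(X,\eta)_{p_1,p_2,x}= -\omega_c^\flat(R_{(\hat g\circ c)^{-1}}d(\hat g\circ c))$ together with $p_3=\partial_x S_\pi$, and write $\Psi$ for the converse assignment $(X,\eta,p_3)\mapsto \hat g_{X,\eta}=g\circ c^{-1}$, where $g:D_*\to P_{G_\pi}$ is the unique integration of the cotangent path with base point $g(z_2^+)=1_{X(z_2^+)}$. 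The goal is to establish $\Phi\circ\Psi=\mathrm{id}$ and $\Psi\circ\Phi=\mathrm{id}$ on the relevant germs.

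First I would verify $\Phi\circ\Psi=\mathrm{id}$ on germs of families of solutions. Starting from a family of solutions $(X,\eta,p_3)$, the map $\Psi$ produces $\hat g_{X,\eta}=g\circ c^{-1}$, so that $\hat g_{X,\eta}\circ c=g$. Applying $\Phi$ then computes $-\omega_c^\flat(R_{g^{-1}}dg)$, which is exactly $(X,\eta)$ by the defining right-invariant equation for $g$ (recall $-\omega_c^\flat(DR_{g^{-1}}dg)=(X,\eta)$), while the third slot returns $\partial_x S_\pi$, which coincides with the original $p_3$ as germs by eq. \eqref{eq:p3neccond}. Hence $\Phi\circ\Psi$ is the identity.

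Next I would verify $\Psi\circ\Phi=\mathrm{id}$ on germs of families of triangles. Given $\hat g$, the map $\Phi$ differentiates it to $(X,\eta)$ and sets $p_3=\partial_x S_\pi$; the map $\Psi$ then re-integrates $(X,\eta)$ with the base-point normalization at $z_2^+$. Here the key tool is the uniqueness part of Lie's second theorem for local Lie groupoids (see \cite{CMS1}): a Lie algebroid morphism $TD_*\to T^*_\pi M$ integrates to at most one smooth map into $P_{G_\pi}$ once its value at a single point is prescribed. The triangle condition $r\hat g(1,0)=0$ in \eqref{eq:triangle} guarantees that $\hat g(1,0)=(\hat g\circ c)(z_2^+)$ is an identity arrow, and its base point $q\hat g(1,0)$ equals $X(z_2^+)$; this matches precisely the normalization used to define $g$. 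Therefore the integration of $(X,\eta)$ reproduces $g=\hat g\circ c$, whence $\Psi\circ\Phi(\hat g)=(\hat g\circ c)\circ c^{-1}=\hat g$ on germs.

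The main obstacle will be the careful bookkeeping across the punctures and the smooth dependence on the parameters $(p_1,p_2,x)$, rather than the formal inversion itself. Concretely, I expect two points to require attention. In $\Psi\circ\Phi$ I must confirm that differentiating a smooth $\hat g$ on $\Delta_2$ and pulling back along $c$ of eq. \eqref{eq:defc2simplex} reproduces exactly the prescribed singular profile $z_{k,\e}^*\eta=\sigma_k p_k\,d\theta/\pi+O(\e)$ of Definition \ref{def:strongsol}; this is the step where the edge behavior of the triangle, governed by the hamiltonian flows $\ham^{p_k\b_\pi}_u$, is converted into the residues of $\eta$ at the $z_k$'s, and it relies on Lemma \ref{lem:limeg} together with the fact (from \cite{CFlie}) that cotangent homotopies integrate to homotopies with fixed endpoints so that the $\e\to0$ limits along the arcs are well defined and path-independent. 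Conversely, in $\Phi\circ\Psi$ I must confirm that the integrated $g$ extends smoothly from $int(\Delta_2)$ to all of $\Delta_2$ with the prescribed edge values, which again follows from Lemma \ref{lem:limeg}. Once the base points are matched and germ-smoothness in the parameters is secured, the bijectivity reduces to the standard statement that integration and differentiation of algebroid morphisms are inverse operations, so the stated one-to-one correspondence follows.
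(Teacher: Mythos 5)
Your proposal is correct and follows essentially the same route as the paper: the two constructions $\Phi$ and $\Psi$ are exactly the ones the text sets up before stating the proposition, and the verification that each lands in the correct class is, as you note, already done there. The only addition is that you spell out the mutual-inverse check (via uniqueness in Lie's second theorem, the matching of the base point $1_{X(z_2^+)}=\hat g(1,0)$, and the germ identity $p_3=\partial_x S_\pi$ from eq. \eqref{eq:p3neccond}), which the paper leaves implicit when it concludes ``We have then proved the following''; your version is a faithful, slightly more explicit rendering of the same argument.
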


To show existence of families of solutions, we proceed by constructing a "(germ) canonical" family of $G_\pi$-triangles as follows. Given $(p_1,p_2,x)$ close enough to $0\times 0 \times M$, by inverse function theorem, there exists a unique $y\equiv y(p_1,p_2,x)$ such that
$$q \ham_{u=1}^{p_1\b_\pi}\ham_{u=1}^{p_2\b_\pi}( y, 0) = x,$$
depending smoothly on the parameters. Also, since the map
$$ M^*\times M \dtod{0\times M} T^*M, (p,y)\mapsto \ham_{u=1}^{p\b_\pi}(y,0)$$
is a local diffeomorphism, there exists a unique smooth curve $\tilde p\equiv \tilde p_{p_1,p_2,x}:[0,1]\to M^*$ such that
$$ \ham_{u=1}^{\tilde p(s)\b_\pi}(y_{p_1,p_2,x}) = \ham_{s}^{p_1\b_\pi}\ham_{u=1}^{p_2\b_\pi}(y_{p_1,p_2,x}).$$
We then set
$$ \hat g\equiv \hat g_{p_1,p_2,x}: \Delta_2 \to T^*M, \ \hat g(t,s) = \ham_{1-t}^{\tilde p(\frac{s}{1-t})\b_\pi}(y,0).$$
It is straightforward to verify that $\hat g$ is smooth (it comes from a an obvious smooth map defined on $[0,1]\times [0,1]$ which collapses the edge $(1,\tilde s)$ into $(y,0), \ \forall \tilde s$). Moreover, by definition it satisfies all the conditions in \eqref{eq:triangle} as well as $\hat g_{0,0,x}=x$, thus defining a family of $G_\pi$-triangles, as wanted. This concludes the proof of Theorem \ref{thm:SP}(1.).

\bigskip

Finally, the proof of Theorem \ref{thm:SP}(2.) goes as follows. Let $(X,\eta,p_3)$ be a family of solutions of $(PDE)^\pi$ and $S_P$ the associated function defined in the statement. 
We need to show that given $(g_1,g_2,m_\pi(g_1,g_2))$ any point in the graph of groupoid multiplication in $G_\pi$, which is close enough to the identities, then
$$ g_1 = (\partial_{p_1}S_P(p_1,p_2,x), p_1), \ g_2 = (\partial_{p_2}S_P(p_1,p_2,x), p_2), \ m_\pi(g_1,g_2) = (x,\partial_xS_P(p_1,p_2,x)),$$
for some small $(p_1,p_2,x)$.

On the other hand, since the PDE system corresponds to the critical points of $A'$ given in eq. \eqref{eq:A'reg}, by applying the chain rule we can compute the partial derivatives of $S_P$, yielding
\begin{eqnarray}
\partial_{p_1}S_P(p_1,p_2,x) &=& \delta_{z_1}(X_{p_1,p_2,x})\nonumber \\
\partial_{p_2}S_P(p_1,p_2,x) &=& \delta_{z_2}(X_{p_1,p_2,x}) \nonumber\\
\partial_{x}S_P(p_1,p_2,x) &=&  p_3(p_1,p_2,x). \label{eq:dersSP}
\end{eqnarray}
We thus need to show that, for each composable $(g_1,g_2)$ close enough to the identities,
\begin{equation}\label{eq:gksverif} g_k = (\delta_{z_k}(X),p_k), \ k=1,2, \ m_\pi(g_1,g_2) = (x,p_3(p_1,p_2,x)),\end{equation}
for some $(p_1,p_2,x)$ close to $(0,0,x)$. 

Given the family of solutions $(X,\eta,p_3)$, consider the integration $g:D_*\to T^*M$ as for Lemma \ref{lem:limeg} and the underlying element $(g_1,g_2,g_3)$ in the graph of multiplication defined as in \eqref{eq:gksfromXeta}. It is clear from this definition (and the definition of $G_\pi$) that every element in the graph of multiplication which is close to the identities can be obtained in this way, from a given family of solutions, by evaluating it on $(p_1,p_2,x)$ in a neighborhood of $0\times 0 \times M$. Moreover, by equations \eqref{eq:gkprojs} and \eqref{eq:p3neccond}, it follows that \eqref{eq:gksverif} indeed holds, thus showing that $S_P$ indeed defines a generating function. This concludes the proof of Theorem \ref{thm:SP}.

\bigskip

{\bf Final remarks on $A'$ and $S_\pi$.}
Theorem \ref{thm:SP} provides a non-perturbative (semiclassical) functional definition of the canonical generating function $S_\pi$ in terms of the maps underlying the PSM.

\begin{itemize}
	\item Notice that, as a consequence of Theorem \ref{thm:main2SK}, the formal Taylor expansion of $S_P$ along $t\pi$ centered at $t=0$ coincides with Kontsevich's tree-level generating function $\bar S_\pi^K$. This was expected from the PSM perspective, since $\bar S_\pi^K$ corresponds to the tree-level Feynman expansion of the same path integral for \eqref{eq:starSP}. Moreover, if we consider the formal family $\e\pi$ in the system of PDEs \eqref{eq:pdes} together with the gauge fixing $d\star \eta =0$ and solve perturbatively for $X=X_0 + \e X_1 +\dots$ and $\eta=\eta_0 + \e \eta_1 +\dots$, with $(X_0,\eta_0)$ the solution for $\pi=0$ (see Example \ref{ex:conjpiconst}), one should recover $\bar S_\pi^K$ by inserting this formal expansion into $A'$.

	\item (Nomenclature) The action $S_P$ can be called \emph{Hamilton-Jacobi action} (by analogy with the similar situation in classical mechanics) and the solutions $(X,\eta)$ of \eqref{eq:pdes} can be called \emph{instantons} for the PSM (with insertions).
	
	\item Different solutions of \eqref{eq:pdes} (equiv. different triangles in $G_\pi$ with the same boundary) can be thought of as corresponding to different gauge fixings for the PSM. Theorem \ref{thm:SP} is a "gauge invariant" result in that the corresponding generating function (germ) is always $S_\pi$. 
	
	\item The SGA equation \eqref{eq:SGA} for $S_\pi$ can be understood from the functional perspective as follows. In terms of discs, the familiar picture (also used for associativity of the star product) consists of thinking of a disk with $4$ marked points on the boundary which can be 'pinched' into two discs glued at a boundary point (in two different ways, providing the SGA identity). This gluing of solutions on the disk corresponds to gluing of the underlying triangles along edges. In this way, the SGA equation can be understood in terms of the familiar simplicial picture in Lie theory (filling a tetrahedron) and using the "gauge invariance" above to get the SGA identity using deformations.
	
	 \item The key relation between the functional perspective involving $A'$ and the local groupoid $G_\pi$ is ultimately related to the extension of the deltas to $\delta_{z_k}(X)$ in eq. \eqref{eq:deltaext}. This choice of extension results in a precise connection to the integral \eqref{eq:alphapi} defining the realization $\a_\pi$ which determines the whole $G_\pi$ structure.
	 
	 \item (Relation to Gauge theory) For a linear Poisson structure so that $M\simeq \gg^*$ (see Example \ref{ex:cotangentH}), the equations for $\eta \in \Omega^1(D,\gg)$ in \eqref{eq:pdes} decouple from those for $X$. The first equation says that $\eta$ defines a principal connection on $M\times H$ (here $Lie(H)=\gg$ is an integrating Lie group) whose the curvature is concentrated at the points $z_k$ with "intensity" given by the $p_k\in \gg$. The map $X:D_*\to \gg^*$ can be obtained by parallel transport in the coadjoint bundle $D\times \gg^*$. It is interesting to notice that the BCH formula (embodied in $S_\pi$ as in Example \ref{ex:Sbch}) can then be recovered by evaluating the functional $A'$ on these gauge-theoretic objects $(X,\eta)$.
	 
	 \item (Non-coordinate cases) It is clear that the picture with $G_\pi$-triangles generalizes naturally to any local symplectic groupoid $G$ integrating an arbitrary Poisson $(M,\pi)$. A corresponding system of PDEs can be written with the aid of a connection and should be the natural substitute for the coordinate system \eqref{eq:pdes} and for their role in the PSM.
	 
\end{itemize}

\medskip

\begin{appendix}

\section{Appendix: Kontsevich graphs, Butcher series and networks}\label{app:butK}

In this Appendix, we first recall basic definitions involving Kontsevich graphs and symbols as well as of Butcher series parameterized by rooted trees. In Subsection \ref{asub:networks}, we introduce a certain type of graphs built from rooted trees which we call "networks of rooted trees" and define associated symbols. These networks make the bridge between certain Butcher series expressions for generating functions to the Kontsevich-trees generating function \eqref{eq:defSK}, as explained in Section \ref{subsec:graphexp}.

\subsection{Kontsevich trees and their symbols}\label{asub:Ktrees}

A \textbf{Kontsevich graph} of type $(n,m)$ is a graph $(V,E)$ whose
vertex set is partitioned in two sets of vertices $V=V^{a}\sqcup V^{g}$,
the \textbf{aerial vertices} $V^{a}=\{1,\dots,n\}$ and the \textbf{terrestrial
	vertices} $V^{g}=\{\bar{1},\dots,\bar{m}\}$ such that 
\begin{itemize}
	\item all edges start from the set $V^{a}$, 
	\item loops are not allowed, 
	\item there are exactly two edges going out of a given vertex $k\in V^{a}$, 
	\item the two edges going out of $k\in V^{a}$ are ordered, the first ("left") one
	being denoted by $e_{k}^{1}$ and the second ("right") one by $e_{k}^{2}$. 
\end{itemize}
An \textbf{aerial edge} is an edge whose end vertex is aerial, and
a \textbf{terrestrial edge} is an edge whose end vertex is terrestrial.
We denote by $G_{n,m}$ the set of Kontsevich graphs of type $(n,m)$.

Given a Poisson structure $\pi$ and a Kontsevich graph $\Gamma\in G_{n,m}$,
one can associate a $m$-differential operator $B_{\Gamma}(\pi)$
on $\R^{n}$ in the following way: For $f_{1},\dots,f_{m}\in C^{\infty}(\R^{n})$,
we define 
\[
B_{\Gamma}(\pi)(f_{1}\dots,f_{m}):=\sum_{I:E_{\Gamma}\rightarrow\{1,\dots,d\}}\big[\prod_{k\in V_{\Gamma}^{a}}(\prod_{\substack{e\in E_{\Gamma}\\
		e=(*,k)
	}
}\partial_{I(e)})\pi^{I(e_{k}^{1})I(e_{k}^{2})}\big]\prod_{i\in V_{\Gamma}^{g}}\big(\prod_{\substack{e\in E_{\Gamma}\\
		e=(*,i)
	}
}\partial_{I(e)}\big)f_{i}.
\]
The symbol $\hat{B}_{\Gamma}$ of $B_{\Gamma}$ is defined by 
\[
B_{\Gamma}(e^{p_{1}x},\dots,e^{p_{m}x})=\hat{B}_{\Gamma}(p_{1},\dots,p_{m})e^{(p_{1}+\dots+p_{m})x}.
\]

\begin{example}\label{ex:kontsymb}
	Figure \ref{fig:network_Kgraph} (b) shows an example of a Kontsevich graph $\Gamma\equiv \Gamma_\rho$ of type $(n,2)$.
	The corresponding symbol is given by
	$$ \hat{B}_\Gamma(\pi)(p_1,p_2) = \pi^{k_1k_2} \ \pi^{j_1j_2} \ \partial_{k_2}\partial_{j_1}\pi^{i_1i_2} \ \partial_{k_1}\pi^{l_1l_2} \ p_{1i_1}p_{1l_1}p_{2i_2}p_{2l_2}p_{2j_2}.$$
	The order of the arrows (left vs. right arrows) is important because flipping, for example,
	the order of the first aerial vertex order would introduce a sign, since $\pi^{ij}=-\pi^{ji}$. 
\end{example}
$T_{n,2}$ is a subset of $G_{n,2}$ that we now define:
\begin{definition}
	Let $\Gamma\in G_{m,n}$ be a Kontsevich graph. The \textbf{interior}
	of $\Gamma$ is the graph $\Gamma_{i}$ obtained from $\Gamma$ by
	removing all terrestrial vertices and terrestrial edges. A Kontsevich
	graph is a \textbf{Kontsevich tree} if its interior is a tree in the
	usual sense (i.e. it has no cycles). We denote by $T_{n,m}$ the set
	of Kontsevich's trees of type $(n,m)$. 
\end{definition}

We will not need a detailed presentation of Kontsevich weights $\Gamma \mapsto W_\Gamma\in \R$, the reader can find a rough account with the conventions that we follow in this paper in \cite[App. A]{CD}.

\subsection{Rooted trees, elementary differentials and Butcher series}\label{asub:RT}

A \textbf{graph} is the data $(V,E)$ of a finite set of vertices
$V=\{v_{1},\dots,v_{n}\}$ together with a set of edges $E$, which
is a subset of $V\times V$. The number of vertices is called the
\textbf{degree} of the graph and is denoted by $|\Gamma|$. We think
of $(v_{1},v_{2})\in E$ as an arrow that starts at the vertex $v_{1}$
and ends at $v_{2}$.
Two graphs are \textbf{isomorphic} if there is a bijection between
their vertices that respects theirs edges. The set $\bar{\Gamma}$
of all isomorphic graphs $ $to a given graph $\Gamma$ is called
a \textbf{topological graph}.
A \textbf{symmetry} of a graph is an automorphism of the graph (i.e.
a relabeling of its vertices that leaves the graph unchanged). The
group of symmetries of a given graph $\Gamma$ will be denoted by
$\sym(\Gamma)$. Note that the number of symmetries of all graphs
sharing the same underlying topological graph is equal; we define
the \textbf{symmetry coefficient} $\sigma(\overline{\Gamma})$ of
a topological graph $\overline{\Gamma}$ to be the number of elements
in $\sym(\Gamma)$, where $\Gamma\in\overline{\Gamma}$.

A \textbf{rooted tree} is a graph that (1) contains no cycle, (2)
has a distinguished vertex called the \textbf{root}, (3) whose set
of edges is oriented toward the root. We will denote the set of rooted
trees by $RT$ and the set of topological rooted trees by $[RT]$.
The set of topological rooted trees can be described recursively as
follows: The single vertex graph $\bullet$ is in $[RT]$ and if $t_{1},\dots,t_{n}\in[RT]$
then so is 
\[
t=[t_{1},\dots,t_{n}]_{\bullet},
\]
where the bracket is to be thought as grafting the roots of $t_{1},\dots,t_{n}$
to a new root, which is symbolized by the subscript $\bullet$ in
the bracket $[\,,\dots,\,]$. We can represent graphically topological rooted trees as the $\gamma\equiv \gamma(\rho)=[\bullet,\bullet]_\bullet$ depicted in Figure \ref{fig:network_Kgraph} (a) (without the extra orientation of edges present in that figure).

\begin{remark}
	Since we are dealing with topological rooted trees, the ordering in
	$[t_{1},\dots,t_{m}]_{\bullet}$ is not important (for instance, we
	do not distinguish between $[\bullet,[\bullet]_{\bullet}]_{\bullet}$
	and $[[\bullet]_{\bullet},\bullet]_{\bullet}$).
\end{remark}

Let $X=X^{i}(x)\partial_{x^i}$ be a vector field on $\R^{d}$. We define the \textbf{elementary differential} of $X$ recursively as follows:
	For the single vertex tree, we define $D_{\bullet}^{u}X=X^{u}(x)$,
	and for $t=[t_{1},\dots,t_{m}]$ in $[RT]$, we define 
	\begin{equation}
	D_{t}^{u}X(x)=D_{t_{1}}^{i_{1}}X(x)\cdots D_{t_{m}}^{i_{m}}X(x) \ \partial_{i_{1}}\dots\partial_{i_{m}}X^{u}(x),\label{eq:rec. form.}
	\end{equation}
	where we used the Einstein summation convention. 
Similarly, given a smooth function $H:\R^d \to \R$ we define recursively for $t=[t_{1},\dots,t_{m}]$,
$$ F^{H,X}_{t}(x) = D^{i_1}_{t_1}X(x)\cdots D^{i_m}_{t_m}X(x) \ \partial_{i_1}\dots\partial_{i_m}H (x), \ F^{H,X}_{\bullet}(x)=H(x).$$
Following \cite{Bu}, one has that the $n$-th iterated Lie derivative is given by
\begin{equation}\label{eq:Xn}
L_X^n x^i = \sum_{t \in[ RT], \ |t|=n} \frac{|t|!}{t! \sigma(t)} D^i_t X,
\end{equation}
where the \emph{tree factorial} is recursively defined as $t!= |t| t_1! \cdots t_k!$ for $t=[t_1,..,t_k]$, $\bullet ! =1$. The symmetry factor $\sigma(t)$ also admits a recursive formula, namely, $\sigma([t_1^{n_1},..,t_1^{n_k}]) = n_1!\sigma(t_1)^{n_1} ...  n_k!\sigma(t_k)^{n_k}$ where the trees $t_1,..,t_k$ are assumed different and the exponent $n_i$ denotes it is repeated $n_i$-times.

Given a map $a:[RT] \to \R$ and a vector field $X\equiv X^i(x)\partial_{x^i}$ on $\R^n$ the associated {\bf Butcher series} is defined by
$$ B(a, \e X, x) = x + \sum_{t \in [RT]} \frac{\e^{|t|}}{\sigma(t)} \ a_t \ D_t X |_x.$$
One can extend this assignment to $B(a, \e X, H)\equiv H|_{y= B(a, \e X, x)}$ for any function $H\equiv H(x)$ by expanding formally around $\e=0$ and using the symbols $F^{H,X}_t$ defined above (see "S-series" in \cite{CHV}).
The reader is referred to the foundational work of Butcher \cite{Bu} on elementary differentials and the use of trees and Butcher series in ordinary differential equations for more information (see also \cite{CHV}).

\begin{example}
	Following \cite[Thm. 21]{CD}, the formal Taylor expansion around $t=0$ of the realization map $\a_{t\pi}(x,p)$ defined by eq. \eqref{eq:alphapi} is given by the Butcher series of eq. \eqref{eq:tayapi}. The coefficients $t\mapsto c^B_t$ generalize Bernoulli numbers and can be computed by iterated integrals (see \cite[Thm. 24]{CD}).
\end{example}

\subsection{Networks of rooted trees and their symbols}\label{asub:networks}

The motivation for introducing networks of rooted trees comes from considering elementary differentials of the form $D_\gamma^iX$, with $\gamma\in [RT]$ and $X=X^{p_1\a}$ a hamiltonian vector field in $M\times M^*$ as in \eqref{eq:Xpa} ($M$ is a coordinate space), and substituting the Butcher series \eqref{eq:aVgen} in place of $\a:M\times M^*\dtod{M\times 0} M$. The idea is that each derivative in the elementary differential acting on each term of $\a$ produces, by the Leibniz rule, a series of terms which can be arranged into these "network" graphs. To account for the two terms in the hamiltonian vector field $X^{p_1\a}$ we consider an extra orientation on the ("skeleton") edges of $\gamma$, which tells us which term is acting.

With these considerations, a {\bf network of rooted trees} $\rho \in \NRT$ consists of the following data: 
\begin{itemize}
	\item a 
	rooted tree $\g\equiv \g(\rho)\in RT$, 
	called the \emph{skeleton of $\rho$}, endowed with an additional orientation on its edges (besides the natural one towards the root);
	
	\item for each vertex $v$ of $\g$, a rooted tree $\rho(v) \in \RT$;
	
	\item for each additionally oriented edge $e$ of $\g$ going from $v$ to $w$, a pair of vertices $\rho(e)_1 \in \rho(v) \ ,\rho(e)_2 \in \rho(w)$.
\end{itemize}
We also consider the case in which there is an extra marked vertex $\rho^* \in \rho(r(\g))$, where $r(\g)\in \g$ is the root, and in this case we denote $\rho \in \NRT^*$. The sum of all the vertices on the various $\rho(v), \ v\in \g,$ yields the total number of vertices in the network, denoted $|\rho|$, and we say $\rho \in \NRT_{|\rho|}$. 
An example of a network of rooted trees is illustrated in Figure \ref{fig:network_Kgraph} (a).
%

\begin{figure}
	\hskip2.5cm \includegraphics[scale=0.35]{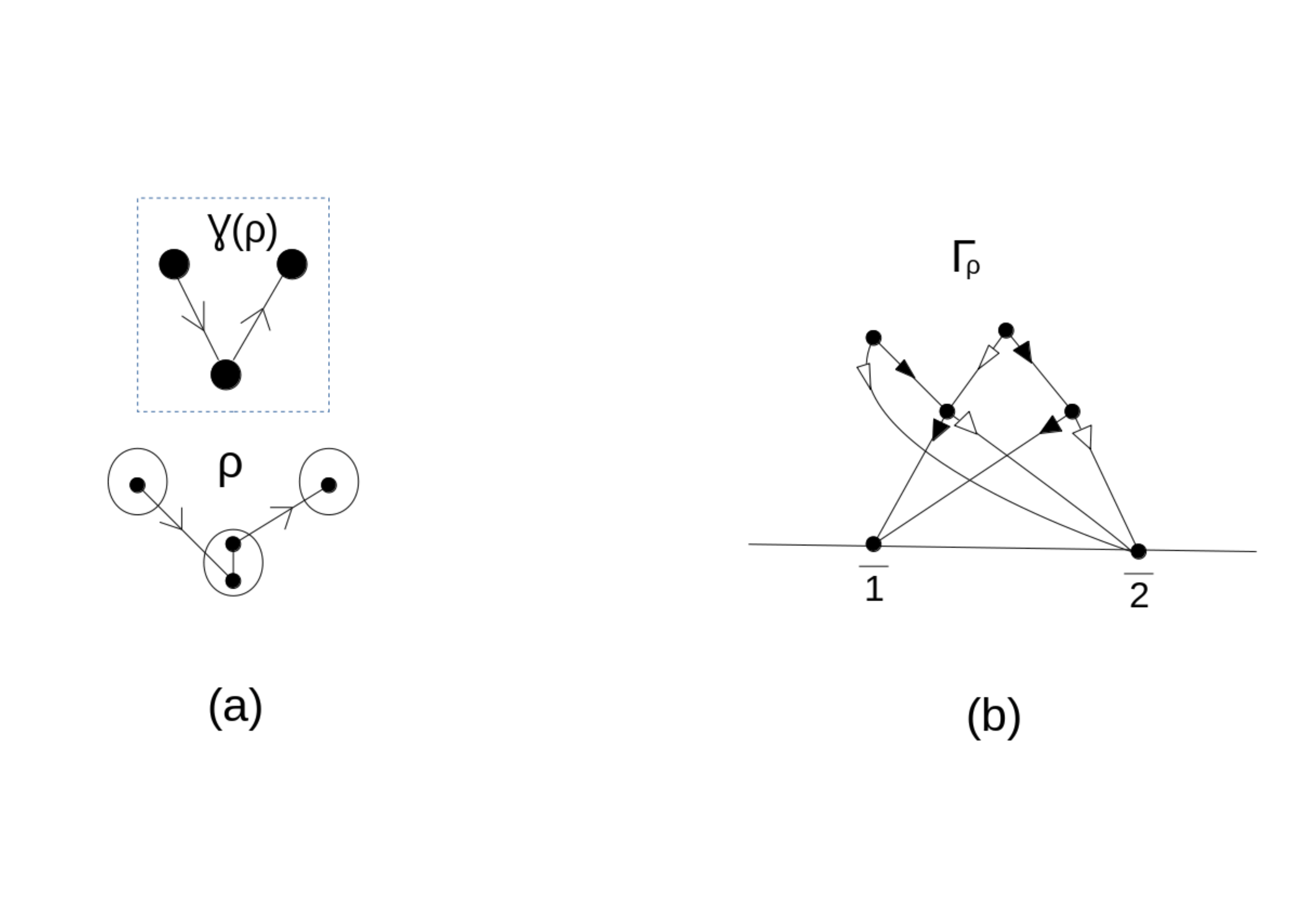} 
	\caption{\label{fig:network_Kgraph} In (a) we have a network of rooted trees $\rho \in NRT_{4}$ and we depicted its skeleton $\gamma(\rho)\in [RT]_3$ together with the additional orientation on its edges. In (b) we have the associated Kontsevich graph $\Gamma_\rho$ of type $(n,2)$, following the assignment $\rho\mapsto \Gamma_\rho$ of Section \ref{subsec:graphexp}. The terrestrial vertices are on a horizontal line and the aerial vertices are placed above them. The first ("left") arrow stemming out of an aerial vertex has a solid black head, while the second ("right") one has a hollow white one.  }
\end{figure}

Consider a vector field $V=V^j(x,p)\partial_{x^j}$ as in eq. \eqref{eq:aVgen} and $p_2 \in M^*$. We now associate a {\bf symbol map}
$$ \rho \mapsto \sym_\rho \equiv \sym_\rho^{p_2,V} \in C^\infty(T^*M),$$
to each network, by giving symbolic rules. For each internal vertex in some $\rho(v), \ v\in \g$, we write $p_{2k}V^k$ if it is the root or $V^k$ otherwise. For each internal edge in each tree $\rho(v)$, we write $\partial_{x^k}$ on the left if the edge is arriving or on the right if it is departing towards the root. For each skeleton-oriented edge $e \in \g$ connecting $\rho(e)_1$ to $\rho(e)_2$, we write $\pm \partial_{p_j}$ on the left at $\rho(e)_1$ and $\partial_{x^j}$ also on the left at $\rho(e)_2$, where we take $-1$ if the orientation of $e$ is towards the root of $\g$ or $+1$ otherwise. When $\rho \in \NRT^*$ has an extra marked vertex $\rho^*$ as above, the associated symbol requires an additional choice of 'decoration' by $x^j$ or $p_j$ and is denoted
$$ \sym_\rho^{x^j} \ or \ \sym_\rho^{p_j}.$$
This symbol is computed with the same rules as before and by adding at $\rho^*$ the extra symbol $-\partial_{p_j}$ or $\partial_{x^j}$ on the left, respectively for decorations $x^j$ or $p_j$.

\begin{example}\label{ex:netsymb}
	Consider the network $\rho \in NRT_4$ of Figure \ref{fig:network_Kgraph} (a). Following the above rules we obtain that the corresponding symbol is given by
	$$ \sym_\rho^{p_2,V}(x,p) = - p_{2k_1} \partial_{p_{j_1}}V^{k_1}|_{(x,p)} \ \partial_{p_{j_2}}V^{l_2}|_{(x,p)} \ \partial_{x^{j_1}}\partial_{x^{l_2}}V^{k_2}|_{(x,p)} p_{2k_2} \ \partial_{x^{j_2}}V^{k_3}|_{(x,p)} p_{2k_3}.$$
	When $V=V_\pi= \pi^{ij} p_j \partial_{x^i}$ as in eq. \eqref{eq:tayapi}, then
	$$ \sym_\rho^{p_2,V}(x,p_1) = - \hat{B}_{\Gamma_\rho}(\pi)(p_1,p_2),$$
	where $\Gamma_\rho$ is the associated Kontsevich graph, computed following Section \ref{subsec:graphexp} and depicted in Figure \ref{fig:network_Kgraph} (b), and its Kontsevich symbol was computed in Example \ref{ex:kontsymb}.
\end{example}

\end{appendix}

\end{document}